\newtheorem{introtheorem}{Theorem}[]
\newtheorem{introconjecture}{Conjecture}[]
\newtheorem{introdefinition}{Definition}[]
\newtheorem{theorem}{Theorem}[subsection]
\newtheorem{lemma}[theorem]{Lemma}
\newtheorem{proposition}[theorem]{Proposition}
\newtheorem{corollary}[theorem]{Corollary}
\newtheorem{conjecture}[theorem]{Conjecture}
\theoremstyle{definition}
\newtheorem{definition}[theorem]{Definition}
\newtheorem{remark}[theorem]{Remark}
\newtheorem{example}[theorem]{Example}
\begin{document}
\title{Action of Correspondences on Filtrations on Cohomology and 0-cycles of Abelian Varieties} 
\author{Rakesh R. Pawar}

\date{}
\maketitle

\abstract  We prove that, given a symmetrically distinguished correspondence of a suitable complex abelian variety (which include any abelian variety of dimension atmost 5, powers of complex elliptic curves, etc.) 
which vanishes as a morphism on a certain quotient of its middle singular cohomology, then it vanishes as a morphism on the deepest part of a particular filtration on the Chow group of 0-cycles of the abelian variety. As a consequence, we prove that given an automorphism of such an abelian variety, which acts as the identity on a certain quotient of its middle singular cohomology, then it acts as the identity on the deepest part of this filtration on the Chow group of 0-cycles of the abelian variety. As an application, we prove that for the Generalized Kummer variety associated to a complex abelian surface and the automorphism induced from a symplectic automorphism of the complex abelian surface, the automorphism of the Generalized Kummer variety acts as the identity on a certain subgroup of its Chow group of 0-cycles. 

\tableofcontents
\begin{section}{Introduction}
Given a smooth projective variety $X$ over an algebraically closed field $k\subset\mathbf{C}$, we can associate two important invariants, the Chow groups and singular cohomology groups, both indexed by non-negative integers. These two invariants are related by cycle class homomorphisms from Chow groups to even degree cohomology groups. Further, the kernel of the cycle class map is related to a Hodge structure on the singular cohomology groups by the Abel-Jacobi map. D. Mumford showed in~\cite{mumford68} that, if $X$ is a complex surface which admits a non-zero holomorphic 2-form (i.e. $H^0(X,\Omega^2_X)\neq0$), then $CH_0(X)$ (the Chow group of 0-dimensional cycles) is ``infinite dimensional". This result suggested that the singular cohomology groups, or rather the Hodge structure on the cohomology groups, dictates the structure of the Chow groups. 

Conjectural formulation of such a relation was first initiated by S. Bloch (See~\cite[Conjecture 1.8]{bl2}). This has been vastly generalized into a finer conjecture known as the Bloch-Beilinson conjecture~\cite{j}, which says that  
\begin{introconjecture}[Bloch-Beilinson]
If $X$ is a smooth projective variety of dimension $d$ over $\mathbf{C}$, then for each $k\geq0$, there exists a decreasing filtration $G^{\bullet}CH^k(X)$ on the Chow groups with rational coefficients satisfying:
\begin{enumerate}
\item (Finiteness) $G^{k+1}CH^k(X)=0$
\item (Functoriality) The filtration $G^i$ is stable under correspondences: if $Y$ is a smooth projective variety over $\mathbf{C}$ and $\Gamma\in CH^l(X\times Y)$, then the maps 
\begin{center}
$\Gamma_{\ast}:CH^k(X)\to CH^{k+l-d}(Y)$
\end{center}
satisfy
\begin{center}
$\Gamma_{\ast}G^iCH^k(X) \subset G^iCH^{k+l-d}(Y).$
\end{center}
\item The induced map 
\begin{center}
$gr^i_G\Gamma_{\ast}: gr^i_G CH^k(X)\to gr^i_G CH^{k+l-d}(Y)$
\end{center}
vanishes if the map 
\begin{center}$[\Gamma]_{\ast}:H^{2k-i}(X,\mathbf{Q})\to H^{2k+2l-2d-i}(Y, \mathbf{Q})$
\end{center}
vanishes on $H^{r,s}(X)$ for $s\leq k-i.$
\end{enumerate}
 \end{introconjecture}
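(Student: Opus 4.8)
The displayed statement is the Bloch--Beilinson conjecture itself, which remains one of the central open problems on algebraic cycles; no unconditional proof is known, so what follows is the standard conditional strategy --- the framework this paper specializes to abelian varieties. The plan is to build the filtration $G^{\bullet}$ from a \emph{Chow--K\"unneth decomposition} in the sense of Murre. First I would invoke, for the class of $X$ in question (curves, surfaces, abelian varieties and their powers, low-dimensional complete intersections, and so on), a system of mutually orthogonal idempotent correspondences $\pi_0,\dots,\pi_{2d}\in CH^d(X\times X)_{\mathbf{Q}}$ with $\sum_i\pi_i=\Delta_X$ and with $[\pi_i]_{\ast}$ equal to the projection onto $H^i(X,\mathbf{Q})$. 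For abelian varieties the Deninger--Murre/Beauville construction, based on the Fourier transform and the action of multiplication-by-$n$, produces such a decomposition canonically (indeed multiplicatively), which is exactly why abelian varieties are the natural testing ground.

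Next I would take the candidate filtration to be the iterated kernels
$$G^{0}CH^{k}(X)_{\mathbf{Q}}=CH^{k}(X)_{\mathbf{Q}},\qquad G^{i}CH^{k}(X)_{\mathbf{Q}}=\bigcap_{j=0}^{i-1}\ker\Big((\pi_{2k-j})_{\ast}\colon G^{j}CH^{k}(X)_{\mathbf{Q}}\longrightarrow G^{j}CH^{k}(X)_{\mathbf{Q}}\Big),$$
which over an abelian variety $A$ coincides with the Beauville filtration $G^{i}CH^{k}(A)_{\mathbf{Q}}=\bigoplus_{s\geq i}CH^{k}_{(s)}(A)_{\mathbf{Q}}$ attached to the eigenspace decomposition under multiplication-by-$n$. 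Functoriality~(2) is then formal: any correspondence $\Gamma$ intertwines the projectors up to the cohomological grading, so $\Gamma_{\ast}$ is automatically filtered of degree $0$ and the map it induces on $gr^{i}_{G}$ is essentially $(\pi_{2k-i})_{\ast}\circ\Gamma_{\ast}$. For property~(3), the point is that on $gr^{i}_{G}CH^{k}(X)$ the correspondence factors through the ``weight-$i$ motive'' $\pi_{2k-i}h(X)$, whose Betti realization sits inside $H^{2k-i}(X,\mathbf{Q})$ with Hodge pieces $H^{r,s}$ for $s\leq k-i$; hence vanishing of $[\Gamma]_{\ast}$ on those Hodge pieces forces the induced graded map to vanish.

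The genuine obstacle, and the reason this is still a conjecture, is the finiteness statement~(1), $G^{k+1}CH^{k}(X)=0$: nothing is known in general that forces the tower of iterated kernels to terminate. It is entangled with the standard conjectures (needed even to realize the $\pi_i$ as algebraic cycles and to compare homological with numerical equivalence), with Jannsen's conservativity conjecture for the category of pure motives, and with Murre's conjectures~(C) and~(D). Even over an abelian variety, where the projectors and the Beauville grading exist unconditionally and where O'Sullivan's theory of symmetrically distinguished cycles provides a functorial splitting of numerical equivalence, the required vanishing $CH^{k}_{(s)}(A)_{\mathbf{Q}}=0$ for $s<0$ is Beauville's conjecture, open in general and known only for the restricted class of ``suitable'' abelian varieties appearing in the abstract. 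Accordingly, the realistic target here is not the conjecture itself but its consequences --- the action of correspondences on the deepest graded piece of the filtration on $0$-cycles --- in precisely those cases.
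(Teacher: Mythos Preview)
You are right that the displayed statement is the Bloch--Beilinson conjecture and that the paper does not prove it; it is stated purely as motivation, and the paper's actual results are partial verifications of its consequences for $0$-cycles on certain abelian varieties. So there is no ``paper's proof'' to compare against, and your opening disclaimer is appropriate.

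That said, your conditional sketch mislocates the difficulty. You present property~(3) as essentially formal once a Chow--K\"unneth decomposition is in hand: ``vanishing of $[\Gamma]_{\ast}$ on those Hodge pieces forces the induced graded map to vanish.'' This step is not formal at all. Knowing that the correspondence $\pi_{2k-i}\circ\Gamma\circ\pi_{2k-i}$ acts as zero on cohomology tells you only that it is homologically trivial; passing from homological triviality to vanishing of its action on Chow groups is exactly the hard content, and it requires input such as Kimura finite-dimensionality (to get nilpotence of the kernel of the cycle class map) together with some mechanism to upgrade nilpotence to actual vanishing. In the paper this mechanism is O'Sullivan's theory of symmetrically distinguished cycles: the relevant projectors are chosen to be symmetrically distinguished, the correspondence $\Gamma$ is assumed symmetrically distinguished, and then homological triviality of $\Gamma\circ\Pi_{d,0}$ forces it to be zero in $CH^d(X\times X)$ on the nose, not merely nilpotent. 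This is precisely the engine behind Theorem~\ref{main}, and it is a (3)-type statement, not a (1)-type statement.

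Conversely, you identify finiteness~(1) as ``the genuine obstacle,'' but for the paper's purposes (1) plays no role: the filtration piece $G^d\subset CH_0(X)$ is defined concretely as $Pic^0(X)^{\cap d}$, and one never needs $G^{d+1}=0$. So your diagnosis is inverted relative to what the paper actually does: the paper takes (1) for granted in the case at hand and spends all its effort on a refined form of~(3), via Vial's niveau-graded projectors $\Pi_{d,j}$ and the symmetrically distinguished lift in Proposition~\ref{sdv}.
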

In particular, for $k=d=$ dim $X$, the conjecture predicts for the Chow group of 0-cycles with $\mathbf{Q}$-coefficients $CH^d(X)=CH_0(X)$ the following:
\begin{introconjecture}[Bloch-Beilinson for 0-cycles]
\label{bb_0}
There exists a decreasing filtration $G^{\bullet}CH_0(X)$ such that $G^{d+1}CH_0(X)=0$, stable under suitable correspondences and satisfying:\\
 For a correspondence $\Gamma\in CH^{d}(X\times X)$, the map 
\begin{center}
 $gr^i_G\Gamma_{\ast}:gr^i_G CH_0(X)\to gr^i_G CH_0(X)$
\end{center}
is 0 if 
\begin{center}
$[\Gamma]_{\ast}:H^{d-i}(X,\Omega_X^{d})\to H^{d-i}(X, \Omega_X^{d})$\end{center}
is 0.


\end{introconjecture}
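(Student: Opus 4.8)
Since the displayed statement is a conjecture, I will instead sketch how to establish the case the paper announces: $i=d=\dim A$ for a suitable complex abelian variety $A$, with $G^{\bullet}$ Beauville's filtration, so that the ``deepest part'' $G^{d}CH_0(A)$ is the bottom summand $CH^d_{(d)}(A)$ of $CH_0(A)_{\mathbf Q}=\bigoplus_{s=0}^{d}CH^d_{(s)}(A)$, and with the hypothesis on $\Gamma$ phrased through a quotient of $H^d(A,\mathbf Q)$ chosen small enough to retain $H^{d,0}(A)=H^0(A,\Omega^d_A)$, hence to imply the conjecture's cohomological hypothesis. The tools are: the Deninger--Murre--Künnemann Chow--Künneth projectors $\pi_i\in CH^d(A\times A)_{\mathbf Q}$ (with $\pi_d$ acting as the projector onto $CH^d_{(d)}(A)$ on $CH_0$ and onto $H^d$ on cohomology), together with the finer primitive idempotents inside $\pi_d$, all of which are symmetrically distinguished --- the primitive ones because the Lefschetz standard conjecture holds for abelian varieties and O'Sullivan's ring isomorphism $DCH^{\bullet}\xrightarrow{\sim}\overline{CH}^{\bullet}$ transports the $\mathfrak{sl}_2$-relations from numerical to rational equivalence; that same isomorphism, in particular that a symmetrically distinguished numerically trivial cycle vanishes and that $DCH^{\bullet}$ of a product of abelian varieties is closed under composition of correspondences; and Beauville's Fourier transform, which gives the unconditional vanishing $CH^{p}_{(s)}(B)=0$ for $s>p$.

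First there is a free reduction. Since $\Gamma\in CH^d(A\times A)$ makes $[\Gamma]_{\ast}$ degree-preserving on $H^{\bullet}(A)$, each off-diagonal component $\pi_i\circ\Gamma\circ\pi_j$ ($i\ne j$) is numerically trivial; being symmetrically distinguished, it vanishes, so $\Gamma=\sum_i\pi_i\circ\Gamma\circ\pi_i$. Hence $\Gamma_{\ast}$ preserves every Beauville summand and $\Gamma_{\ast}|_{CH^d_{(d)}(A)}=(\pi_d\circ\Gamma\circ\pi_d)_{\ast}|_{CH^d_{(d)}(A)}$; replacing $\Gamma$ by $\pi_d\circ\Gamma\circ\pi_d$ we may assume $[\Gamma]_{\ast}$ lives on $H^d(A)$.

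Next comes the use of the hypothesis, and the step where ``suitable'' is spent. The hypothesis kills the prescribed quotient of $H^d(A)$; for suitable $A$ the complementary sub-Hodge structure $K\subseteq H^d(A)$ is cut out by a symmetrically distinguished idempotent $p$ inside $\pi_d$, so $(\pi_d-p)\circ\Gamma$ has the same cohomology class as $\Gamma$, is symmetrically distinguished and numerically trivial, hence zero; thus $\Gamma=p\circ\Gamma$ factors through the motive $M$ realizing $K$. Again for suitable $A$, $M$ has positive coniveau: up to Tate twists it is built from pieces $L^{j}h^{d-2j}_{\mathrm{prim}}(A)$ with $j\ge1$, equivalently from motives of abelian varieties of dimension $<d$. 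In either form $CH^{d}(M)$ is contained in a finite sum of Chow groups of the shape $CH^{\,d-j}_{(d)}(\cdot)$ with $j\ge 1$, all of which vanish because $d>d-j$. Therefore $\Gamma_{\ast}|_{CH_0(A)}$ factors through the zero group; together with the first step this gives $\Gamma_{\ast}=0$ on $CH^d_{(d)}(A)=G^{d}CH_0(A)$. The automorphism corollary is the case $\Gamma=\Gamma_f-\Delta_A$, which is symmetrically distinguished because graphs of automorphisms of abelian varieties are, and the generalized Kummer application follows by transporting the statement along the Beauville/Chow--Künneth decomposition of the Kummer variety.

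The main obstacle is precisely the middle step: converting a purely cohomological vanishing into an honest factorization of $\Gamma$, as a correspondence, through a motive whose relevant Chow group is zero. This requires (a) the sub-Hodge structure $K\subseteq H^d(A)$ to be cut out by an algebraic --- indeed symmetrically distinguished --- idempotent, a Hodge-conjecture-type input, and (b) the resulting motive $M$ to be of low coniveau, or built from abelian varieties of smaller dimension, a generalized-Hodge-conjecture-type input. Pinning down the class of abelian varieties for which (a) and (b) hold, checking that dimension $\le 5$ and powers of elliptic curves belong to it, and keeping the quotient in the hypothesis as small as possible while still controlling $K$, is the technical heart of the argument.
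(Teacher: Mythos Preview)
Your proposal is correct and follows essentially the same strategy as the paper's proof of its main theorem (Theorem~\ref{main}): refine $\pi_d$ into symmetrically distinguished idempotents graded by the niveau filtration (the paper obtains these by applying O'Sullivan's lifting to Vial's refined Chow--K\"unneth projectors $\Pi_{d,j}$, which is exactly what your informal construction of the idempotent $p$ amounts to), note that the relevant composite with $\Gamma$ is symmetrically distinguished and numerically trivial hence zero, and use that the positive-niveau pieces kill $CH_0$. The only difference is organizational: rather than first reducing to $\pi_d\circ\Gamma\circ\pi_d$ and then factoring $\Gamma=p\circ\Gamma$ through a positive-coniveau motive, the paper shows directly that $\Gamma\circ\Pi_{d,0}=0$ in $CH^d(X\times X)$ and then invokes Vial's vanishing $(\Pi_{d,j})_\ast|_{CH_0}=0$ for $j>0$ to conclude $(\Gamma\circ\pi_d)_\ast=0$ on $CH_0$.
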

Our goal here is to study a particular natural filtration $G^{\bullet}$ on $CH_0(X)$ for $X$ an abelian variety over an algebraically closed field $k$ of char 0. This filtration was first studied by S. Bloch in~\cite{bl1}.
\begin{introdefinition}\label{filtration0cycles}
Let $Pic(X)$ (and $Pic^0(X)$) be the group of divisors  (and divisors algebraically equivalent to 0, resp.)on $X$, modulo rational equivalence with $\mathbf{Q}$-coefficients. Now the intersection of divisors gives a decreasing filtration $G^{\bullet}$ on the Chow group of 0-cycles, $CH_0(X)$ as follows:
\begin{center}
  $G^i:=Pic^0(X)^{\cap i}\cap Pic(X)^{\cap (d-i)}$, for $i\geq 0$ 
  \end{center}
  where 
  \begin{center}
$Pic^0(X)^{\cap i}\cap Pic(X)^{\cap (d-i)}$:=Im $(Pic^0(X)^{\otimes i}\otimes Pic(X)^{\otimes (d-i)}\xrightarrow{(-)\cap(-)\cap\cdots\cap(-)} CH_0(X))$
  \end{center}
 and $\cap$ is the intersection product on cycles on $X$ as defined in \cite[Ch. 8]{fulton}. 
  \end{introdefinition}
  One expects that, this filtration satisfies the Bloch-Beilinson Conjecture \ref{bb_0} for the Chow group of 0-cycles on $X$ with rational coefficients. In particular, if $f: X\to X$ is an algebraic automorphism of $X$, $\Gamma=\Gamma_f-\Delta_X$, where $\Gamma_f$ is the graph of $f$ and $\Delta_X$ is the diagonal of $X$, the Conjecture~\ref{bb_0} predicts the following:

\begin{introconjecture}
\label{mq}
Let $X$ be an abelian variety of dimension $d$ over 
$\mathbf{C}.$ 
Suppose $f :X \to X$ is an automorphism of $X$ such that the induced morphsim $f_{\ast}: H^{0}(X,\Omega_X^d) \to H^{0}(X,\Omega_X^d)$ is the identity (abbreviated as Id). Then $f_{*}=identity: G^{d} \to  G^{d}$ induced by the restriction of $f_{\ast}: CH_0(X)\to CH_0(X)$.
\end{introconjecture}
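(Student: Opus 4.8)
The plan is to reduce Conjecture~\ref{mq}, for the class of ``suitable'' abelian varieties to which the paper's main theorem applies, to that theorem, after first disposing of the translation part of $f$. Write $f=t_a\circ\phi$ with $a=f(0)$, $t_a$ translation by $a$, and $\phi=t_{-a}\circ f$ an automorphism of $X$ fixing the origin, hence a group automorphism. For $D\in Pic^0(X)$ one has $t_a^\ast D=D$, because the homomorphism $\phi_D\colon X\to\hat X$ attached to $D$ vanishes; since $(t_a)_\ast=t_{-a}^\ast$ is a ring homomorphism of $CH^\ast(X)$, it fixes every class $D_1\cap\cdots\cap D_d$ with $D_i\in Pic^0(X)$, and hence acts as the identity on $G^d$. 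Therefore $f_\ast=\phi_\ast$ on $G^d$. Moreover, since $df_0=d\phi_0$, the automorphisms $f$ and $\phi$ induce the same map on $H^\ast(X,\mathbf Q)=\bigwedge^\ast H^1(X,\mathbf Q)$, so the hypothesis becomes $\phi_\ast=\mathrm{Id}$ on $H^0(X,\Omega_X^d)=H^{d,0}(X)$ (equivalently $\det(d\phi_0)=1$), and it remains to prove $\phi_\ast=\mathrm{Id}$ on $G^d$ for such a $\phi$. (One also notes that $[n]^\ast D=nD$ for $D\in Pic^0(X)$, whence $[n]^\ast$ acts by $n^d$ on $G^d$ by multilinearity of the intersection product, so $G^d\subseteq CH^d_{(d)}(X)_{\mathbf Q}$ lies in the deepest Beauville component, which is exactly where the main theorem operates.)

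Now set $\Gamma:=\Gamma_\phi-\Delta_X\in CH^d(X\times X)_{\mathbf Q}$, so that $\Gamma_\ast=\phi_\ast-\mathrm{Id}$ on $CH_0(X)$; I want $\Gamma_\ast=0$ on $G^d$, which I would deduce from the main theorem once I verify that $\Gamma$ is symmetrically distinguished and that $[\Gamma]_\ast$ vanishes on the designated quotient of $H^d(X,\mathbf Q)$. For the first, $\Delta_X$ and $\Gamma_\phi$ are the images of the group homomorphisms $(\mathrm{id}_X,\mathrm{id}_X)$ and $(\mathrm{id}_X,\phi)$ from $X$ to $X\times X$, hence are classes of abelian subvarieties of $X\times X$ and therefore symmetrically distinguished in O'Sullivan's sense; since the symmetrically distinguished cycles form a subgroup, so is $\Gamma$. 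For the second, $[\Gamma]_\ast=\phi_\ast-\mathrm{Id}$ on $H^d(X,\mathbf Q)$ kills $H^{d,0}(X)$ by the reduced hypothesis; being a morphism of $\mathbf Q$-Hodge structures it carries $H^{d,0}$ into $H^{d,0}$, so $(\mathrm{Im}\,[\Gamma]_\ast)^{d,0}=[\Gamma]_\ast(H^{d,0})=0$, and hence $\mathrm{Im}\,[\Gamma]_\ast$ is contained in the maximal sub-Hodge-structure of $H^d(X,\mathbf Q)$ with vanishing $(d,0)$-component, which is precisely the condition that $[\Gamma]_\ast$ kill the designated quotient. The main theorem then gives $\Gamma_\ast=0$ on $G^d$, i.e.\ $\phi_\ast=\mathrm{Id}$ on $G^d$, and combined with the first paragraph, $f_\ast=\mathrm{Id}$ on $G^d$.

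The real content sits in the main theorem itself, which is not reproved here. Within the present deduction the two points that need care are that graphs of group automorphisms are symmetrically distinguished (this uses O'Sullivan's structure theory for algebraic cycles on abelian varieties, together with stability of the symmetrically distinguished subring under pushforward along homomorphisms of abelian varieties) and the translation of ``$\phi_\ast=\mathrm{Id}$ on $H^{d,0}(X)$'' into ``$\phi_\ast=\mathrm{Id}$ on the designated quotient of $H^d(X,\mathbf Q)$'', which rests on semisimplicity of polarized $\mathbf Q$-Hodge structures. If one also wished to establish the main theorem, I expect the main obstacle to be showing that the symmetrically distinguished hypothesis alone forces a correspondence whose action on this quotient of middle cohomology vanishes to act trivially on $G^d$ -- that is, that symmetric distinguishedness rules out the ``phantom'' contributions that the Bloch--Beilinson philosophy predicts to be invisible in cohomology.
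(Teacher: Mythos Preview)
There is a genuine gap in your second paragraph. The hypothesis of the paper's main theorem (Theorem~\ref{intromain}) is that $[\Gamma]_\ast$ vanish on $H^d(X)/\widetilde{N}^1H^d(X)$, i.e.\ that $\mathrm{Im}\,[\Gamma]_\ast\subseteq\widetilde{N}^1H^d(X)$. What you actually establish is that $\mathrm{Im}\,[\Gamma]_\ast$ is a sub-$\mathbf{Q}$-Hodge structure with vanishing $(d,0)$-piece, hence is contained in the \emph{maximal} such sub-Hodge structure, namely $H^d(X)_{tr}^\perp$ (equivalently, the maximal sub-Hodge structure contained in $F^1H^d(X,\mathbf{C})\cap H^d(X)$). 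Your sentence ``which is precisely the condition that $[\Gamma]_\ast$ kill the designated quotient'' silently identifies $\widetilde{N}^1H^d(X)$ with this maximal sub-Hodge structure, and this identification is not known in general: one has only the chain of inclusions
\[
\widetilde{N}^1H^d(X)\subseteq N^1H^d(X)\subseteq H^d(X)_{tr}^\perp,
\]
and the paper is explicit (see the discussion between Conjectures~\ref{mq} and~\ref{mq1}) that the first inclusion is an equality only under a Lefschetz-type hypothesis and the second only under GHC$(X,1,d)$. Assumption~($\ast$), which is all that the main theorem requires, guarantees neither.

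This is exactly why the paper does \emph{not} deduce Conjecture~\ref{mq} directly from Theorem~\ref{intromain}. It first proves the weaker Conjecture~\ref{mq1} (Corollary~\ref{mc}), whose hypothesis is already phrased in terms of $H^d(X)/\widetilde{N}^1H^d(X)$, and then separately records (Theorem~\ref{mtghc}) that Conjecture~\ref{mq} follows \emph{provided} one additionally assumes $N^1H^d(X)=\widetilde{N}^1H^d(X)$ and GHC$(X,1,d)$. Your argument, as written, would prove Conjecture~\ref{mq} for every $X$ satisfying only ($\ast$), which the paper does not claim. To repair the deduction you must either add these two hypotheses, or restrict to those abelian varieties in Section~\ref{list} (cases 3, 4, 5, and low-dimensional cases of 1) for which both are known. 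Your reduction from an arbitrary variety automorphism to a group automorphism via translations is correct and is a point the paper leaves implicit.
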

This conjecture is difficult to answer in general. We will reformulate this conjecture to Conjecture \ref{mq1} which is more tractable in view of the recent results of C. Vial~\cite{v}. For this let us consider more closely the condition that $f_{\ast}: H^{0}(X,\Omega_X^d) \to H^{0}(X,\Omega_X^d)$ is the identity. For this discussion, $X$ can be any smooth projective variety of dimension $d$ over $\mathbf{C}$. Let $\Gamma_f$:= the graph of $f \in CH^d(X\times X)$. Then $[\Delta_X-\Gamma_f]_{\ast}$ is a $\mathbf{Q}$-linear map on the singular cohomology group $H^d(X):=H^d(X,\mathbf{Q})$ which after tensoring with $\mathbf{C}$ is 0 on $H^{0}(X,\Omega_X^d).$ Note that $[\Delta_X-\Gamma_f]_{\ast}:H^d(X)\to H^d(X)$ is a morphism of $\mathbf{Q}$-Hodge structures, hence ker$([\Delta_X-\Gamma_f]_{\ast})$ is a $\mathbf{Q}$-sub Hodge structure of $H^d(X)$, which contains $H^{0}(X,\Omega_X^d)$ after tensoring with $\mathbf{C}.$ Therefore,  ker$([\Delta_X-\Gamma_f]_{\ast})$ contains the smallest $\mathbf{Q}$-sub Hodge structure of $H^d(X)$ which contains $H^{0}(X,\Omega_X^d)$ after tensoring with $\mathbf{C}$, which we will write as $H^d(X)_{tr}$ [for transcendental cohomology]. Thus, we can replace the assumption in the Conjecture~\ref{mq} by the condition that $f_{\ast}: H^d(X)_{tr} \to H^d(X)_{tr}$ is the identity.  

In terms of the \textbf{Coniveau filtration} $N^{\bullet}H^d(X)$ on $H^d(X)$ defined in Definition~\ref{con} and the Hodge filtration $F^pH^d(X, \mathbf{C})$ on $H^d(X,\mathbf{C})$ defined in Definition~\ref{hodgefil}, 
\begin{center}
 $N^pH^d(X)\subseteq F^pH^d(X, \mathbf{C})\cap H^d(X)$, for $p\geq0$.
 \end{center}
 We recall the Grothendieck's Generalized Hodge Conjecture here. We say that $X$ satisfies \textbf{GHC($X, p, d$)}, if
the maximal $\mathbf{Q}$-Hodge structure contained in $F^pH^d(X, \mathbf{C})\cap H^d(X)$ is $N^pH^d(X)$.\\ 
Fixing a polarization on $H^d(X)$ using the intersection pairing, we have the orthogonal complement $H^d(X)_{tr}^{\perp}$ of $H^d(X)_{tr}$ in $H^d(X)$, which is a sub $\mathbf{Q}$-Hodge structure contained in $F^1H^d(X, \mathbf{C})\cap H^d(X)$. 
By the earlier discussion, $H^d(X)_{tr}$ is contained in $N^1H^d(X)^{\perp}$ as $N^1H^d(X)^{\perp}$ is a $\mathbf{Q}$-sub Hodge structure containing $H^{0}(X,\Omega_X^d)$ in the Hodge decomposition. Hence $N^1H^d(X)\subseteq H^d(X)_{tr}^{\perp}$. Thus, we have a surjective homomorphism
\begin{center}
$\dfrac{H^d(X)}{N^1H^d(X)}\twoheadrightarrow\dfrac{H^d(X)}{H^d(X)_{tr}^{\perp}}\simeq H^d(X)_{tr}$
\end{center}
which will be an isomorphism assuming GHC($X, 1, d$). Examples of smooth projective varieties satisfying Grothendieck's generalized Hodge Conjecture will be listed in the Section~\ref{list}.

 Next, we consider the \textbf{Niveau filtration} $\widetilde{N}^{\bullet}$ on the (co)homology of $X$ in the next section (see Definition~\ref{nev}). Using the properties of Niveau and Coniveau filtration viz., Eq.~(\ref{inclusion}), we have surjections
 \begin{center}
 $\dfrac{H^d(X)}{\widetilde{N}^1H^d(X)}\twoheadrightarrow\dfrac{H^d(X)}{N^1H^d(X)}\twoheadrightarrow\dfrac{H^d(X)}{H^d(X)_{tr}^{\perp}}\simeq H^d(X)_{tr},$
 \end{center}
 where the first arrow is an isomorphism if the Lefschetz standard conjecture holds for all smooth projective varieties of dimension $d-2$ (see the proof of \cite[Proposition 1.1]{v}) and the second arrow is an isomorphism if GHC($X, 1, d$) holds true. 
 
 With this in hindsight, we will modify the Conjecture~\ref{mq}, as follows:
\begin{introconjecture}
\label{mq1}
Let $X$ be an abelian variety of dimension $d$ over 
$\mathbf{C}.$ Suppose $f :X \to X$ is an automorphism of $X$ such that the induced morphism $f_{\ast}: \dfrac{H^d(X)}{\widetilde{N}^1H^d(X)} \to \dfrac{H^d(X)}{\widetilde{N}^1H^d(X)}$ is the Id. Then $f_{*}=Id: G^{d} \to  G^{d}$ (as in Definition~\ref{filtration0cycles}) induced by the restriction of $f_{\ast}: CH_0(X)\to CH_0(X)$. 
\end{introconjecture}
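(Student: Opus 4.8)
The plan I would follow proves the statement for the ``suitable'' abelian varieties announced in the abstract --- those for which the Lefschetz standard conjecture in dimension $d-2$ and $\mathrm{GHC}(X,1,d)$ hold (so in particular $\dim X\le 5$, or $X$ a power of an elliptic curve), in which case the surjections displayed just above are isomorphisms and $\widetilde{N}^1H^d(X)=N^1H^d(X)=H^d(X)_{tr}^{\perp}$. \emph{First, reduce to a homomorphism.} Any morphism of abelian varieties is a homomorphism followed by a translation, so write $f=t_a\circ g$ with $g$ an automorphism of the abelian variety $X$ and $a=f(0)$. Every translation acts as the identity on $H^{\ast}(X)$ and on $Pic^0(X)$ (the line bundles in $Pic^0$ are exactly the translation-invariant ones), hence on $G^d=Pic^0(X)^{\cap d}$; thus $f_{\ast}=g_{\ast}$ on both $H^d(X)$ and $G^d$, and we may assume $f=g$ is a homomorphism. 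Then $\Gamma:=\Delta_X-\Gamma_f\in CH^d(X\times X)_{\mathbf{Q}}$ is \emph{symmetrically distinguished} in O'Sullivan's sense: $\Delta_X$ is, and the graph of the homomorphism $f$ is the push-forward of $[X]$ along the homomorphism $(\mathrm{id},f)\colon X\to X\times X$.

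\emph{Localise on $h^d(X)$.} Let $\{\pi_i\}$ be the Deninger--Murre Chow--K\"unneth projectors of $X$, which are symmetrically distinguished; by K\"unnemann's computation $(\pi_d)_{\ast}$ is the projector of $CH_0(X)$ onto Beauville's deepest summand $CH_0(X)_{(d)}$. Since $Pic^0(X)=CH^1(X)_{(1)}$ and the intersection product respects the Beauville grading, $G^d\subseteq CH_0(X)_{(d)}$; moreover $\Gamma_{\ast}$ preserves $G^d$, because $f_{\ast}$ sends $L_1\cap\dots\cap L_d$ to $(f^{-1})^{\ast}L_1\cap\dots\cap(f^{-1})^{\ast}L_d$ and $(f^{-1})^{\ast}$ is an automorphism of $Pic^0(X)$. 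Hence for $z\in G^d$ both $z$ and $\Gamma_{\ast}z$ are fixed by $(\pi_d)_{\ast}$, so $\Gamma_{\ast}z=(\pi_d\circ\Gamma\circ\pi_d)_{\ast}z$; it therefore suffices to show that $\Gamma':=\pi_d\circ\Gamma\circ\pi_d$ (still symmetrically distinguished) kills $G^d$. Cohomologically $[\Gamma']$ is the self-map of $H^d(X)$ equal to $\mathrm{id}-f_{\ast}$, whose image lies in $\widetilde{N}^1H^d(X)$ by hypothesis.

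\emph{Pass to rational equivalence, then conclude.} By the ``suitable'' hypotheses $\widetilde{N}^1H^d(X)=H^d(X)_{tr}^{\perp}$, and I would argue that for abelian $X$ this subspace is cut out inside $h^d(X)=\wedge^d h^1(X)$ by a \emph{symmetrically distinguished} idempotent $p\in CH^d(X\times X)_{\mathbf{Q}}$ whose image sub-motive is a direct sum of Tate twists $\mathbf{L}^{\otimes c}\otimes M_c$ with $c\ge 1$ and each $M_c$ a direct summand of $h^{d-2c}(B_c)$ for an abelian variety $B_c$ of dimension $d-c<d$; here one uses $\mathrm{GHC}(X,1,d)$ to realise the coniveau-$1$ part by a divisor, the Lefschetz conjecture in dimension $d-2$ to pass from coniveau to niveau, and the decomposition of $\wedge^d h^1(X)$ into abelian-governed isotypic summands. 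Since the image of $[\Gamma']$ lies in $\widetilde{N}^1H^d(X)=\mathrm{Im}([p]_{\ast})$, we get $[\Gamma']=[p\circ\Gamma']$ in $H^{2d}(X\times X)$, so $\Gamma'-p\circ\Gamma'$ is homologically --- equivalently numerically, since the two equivalences coincide on abelian varieties --- trivial and symmetrically distinguished, hence $0$ in $CH^d(X\times X)_{\mathbf{Q}}$ by O'Sullivan's theorem. Thus $\Gamma'=p\circ\Gamma'$, so $\Gamma'_{\ast}=p_{\ast}\circ\Gamma'_{\ast}$; and $p_{\ast}$ vanishes on $CH_0(X)_{(d)}$ because it factors through $\bigoplus_c\mathrm{Hom}\!\bigl(\mathbf{1}(-d),\mathbf{L}^{\otimes c}\otimes M_c\bigr)$, a subspace of $\bigoplus_c CH_0(B_c)_{(d)}$, which is $0$ by Beauville's vanishing $CH_0(B)_{(s)}=0$ for $s>\dim B$. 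Since $\Gamma'_{\ast}z\in CH_0(X)_{(d)}$ for $z\in G^d$, this yields $\Gamma'_{\ast}z=0$, i.e.\ $f_{\ast}=\mathrm{Id}$ on $G^d$.

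\emph{The main obstacle.} The reductions in the first two steps are formal, and the use of O'Sullivan's theorem (valid for every abelian variety, since homological and numerical equivalence agree there) is a black box. The real difficulty is the opening claim of the third step: constructing the \emph{symmetrically distinguished} idempotent $p$ that realises the niveau filtration on $H^d(X)$ as a Tate-twisted abelian sub-motive of $h^d(X)$. This is precisely where the hypotheses hidden in ``$X$ suitable'' --- the Lefschetz standard conjecture in dimension $d-2$ and $\mathrm{GHC}(X,1,d)$, hence the bound $\dim X\le 5$ or the special families --- are essential, and it is the only genuinely non-formal ingredient.
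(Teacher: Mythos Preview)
Your strategy is essentially the paper's: reduce to a symmetrically distinguished correspondence $\Gamma=\Delta_X-\Gamma_f$, pass to the $\pi_d$-part, use a symmetrically distinguished projector realising the niveau filtration on $H^d(X)$, and conclude via O'Sullivan's theorem that the relevant correspondence vanishes rationally. The paper packages your ``main obstacle'' as a black box: Vial's refined Chow--K\"unneth theorem (Theorem~\ref{vial}) gives mutually orthogonal idempotents $\Pi_{d,j}$ with $\pi_d=\sum_j\Pi_{d,j}$ and $(\Pi_{d,j})_*H^*(X)=Gr^j_{\widetilde N}H^d(X)$, together with the vanishing $\Pi_{d,j}|_{CH_0(X)}=0$ for $j\ge1$; Proposition~\ref{sdv} then lifts these to symmetrically distinguished projectors via O'Sullivan. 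Your projector $p$ is exactly $\sum_{j\ge1}\Pi_{d,j}=\pi_d-\Pi_{d,0}$, and the paper's argument is the dual of yours: it shows $\Gamma\circ\Pi_{d,0}$ is homologically trivial and symmetrically distinguished, hence zero, and then uses that $\Pi_{d,0}$ and $\pi_d$ act identically on $CH_0(X)$.

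Two points where your write-up overshoots. First, you invoke $\mathrm{GHC}(X,1,d)$ to build $p$, but this is not needed for Conjecture~\ref{mq1}: the niveau filtration $\widetilde N^1H^d(X)$ is \emph{by definition} the image of an algebraic correspondence, so realising it motivically requires only Vial's hypothesis~($\ast$) (property \textbf{B} for $X$ plus a mild condition on the $W_{d,j}$), not GHC. In the paper, GHC enters only when deducing Conjecture~\ref{mq} from Conjecture~\ref{mq1} (Theorem~\ref{mtghc}). Second, your vanishing argument for $p_*$ on $CH_0(X)_{(d)}$ asserts that the auxiliary varieties $B_c$ are abelian and appeals to Beauville's range; in fact the $W_{d,j}$ arising from the niveau filtration need not be abelian, and Vial's vanishing comes from pure dimension (the relevant Chow group on $W_{d,j}$ sits in negative degree). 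None of this breaks your argument, but it means the clean hypothesis is~($\ast$), and the clean reference for your ``non-formal ingredient'' is Vial's theorem rather than a hand-built decomposition of $\wedge^d h^1(X)$.
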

By the discussion above, clearly the assumption on $f$ in the Conjecture~\ref{mq1} implies the assumption on $f$ in the Conjecture \ref{mq}. Conversely, assume that $f$ satisfies the assumption as in the Conjecture~\ref{mq}, then if we assume that GHC($X, 1, d$) holds, and also the Lefschetz standard conjecture holds for all smooth projective varieties of dimension $d-2$, then $f$ satisfies the assumption in Conjecture~\ref{mq1}. 
\paragraph{Statements of the results}
In this paper, we propose to answer Conjecture~\ref{mq} and~\ref{mq1} for suitable abelian varieties $X$, which follows from the following more general theorem.
The main theorem in this paper is:
\begin{introtheorem}[See Theorem~\ref{main}]
\label{intromain}
Let $X$ be a complex abelian variety satisfying the assumption ($\ast$) as in the Section~\ref{refinedCK}.
Suppose $\Gamma\in CH^d(X\times X)$ is a correspondence from $X$ to $X$ which is symmetrically distinguished. Let 
\begin{center}
$[\Gamma]_{\ast}: \dfrac{H^d(X)}{\widetilde{N}^1H^d(X)} \to \dfrac{H^d(X)}{\widetilde{N}^1H^d(X)}$
\end{center} be 0 homomorphism.
Then $[\Gamma]_{*}=0: \cap_{i=1}^{d} Pic^0(X) \to  \cap_{i=1}^{d} Pic^0(X)$ induced by the restriction of $[\Gamma]_{*}:CH_0(X)\to CH_0(X)$.
\end{introtheorem}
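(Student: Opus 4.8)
The plan is to exploit the decomposition of $CH_0(X)$ coming from the Fourier–Beauville (Beauville) decomposition together with the refined Chow–Künneth decomposition available under the hypothesis $(\ast)$ in Section~\ref{refinedCK}. First I would reduce the statement to a statement about the action of $\Gamma$ on the deepest graded piece. Recall that for an abelian variety $X$ one has the eigenspace decomposition $CH_0(X)_{(s)}$ under the multiplication-by-$n$ maps, and that the subgroup $G^d = \cap_{i=1}^d Pic^0(X)$ coincides with the piece where the Pontryagin/Beauville grading is extremal; concretely $\cap_{i=1}^d Pic^0(X)$ is identified (up to the usual indexing conventions) with $CH_0(X)_{(d)}$, the most transcendental part. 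The key point is that a symmetrically distinguished correspondence $\Gamma$ acts on each Beauville component through a well-defined graded piece of its refined Chow–Künneth components, and by Vial's results (\cite{v}) — invoked here via the hypothesis $(\ast)$ — the action on $CH_0(X)_{(d)}$ is governed entirely by the action of $[\Gamma]_\ast$ on a single cohomological quotient, namely $H^d(X)/\widetilde N^1 H^d(X)$.

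The key steps, in order, would be: (1) Using the assumption $(\ast)$, write down the refined Chow–Künneth projectors $\pi_i$ and the further refinement $\pi_{i,j}$ (the "niveau-refined" pieces), so that $\Gamma$ decomposes as $\sum \Gamma_{i,j}$ with $\Gamma_{i,j} = \pi_{i',j'}\circ\Gamma\circ\pi_{i,j}$; the point is that a symmetrically distinguished cycle is compatible with this decomposition because the projectors themselves can be chosen symmetrically distinguished (this is where O'Sullivan's theory and Vial's construction enter). (2) Observe that the action of $\Gamma$ on $G^d = CH_0(X)_{(d)}$ factors through exactly one of these refined components — the one that corresponds cohomologically to the quotient $H^d(X)/\widetilde N^1 H^d(X)$ — because all other components either land in a shallower part of the Beauville filtration or act trivially on $CH_0(X)_{(d)}$ for weight reasons. (3) The hypothesis says $[\Gamma]_\ast = 0$ on $H^d(X)/\widetilde N^1 H^d(X)$, hence the corresponding refined Chow–Künneth component $\Gamma_{d,0}$ (or whatever the indexing makes it) is a symmetrically distinguished correspondence that is homologically trivial on the relevant piece. (4) Invoke the crucial input: a symmetrically distinguished correspondence between abelian varieties which is homologically trivial is in fact rationally trivial on the appropriate motive — this is O'Sullivan's theorem that symmetrically distinguished cycles inject into cohomology, applied to the relevant summand of the motive of $X\times X$. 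Therefore $\Gamma_{d,0}$ acts as zero on $CH_0(X)_{(d)} = G^d$, and since the other components already act as zero there, $[\Gamma]_\ast = 0$ on $G^d$.

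The main obstacle I anticipate is step (2)–(3): making precise the claim that the action on the deepest piece $G^d$ is controlled by precisely the cohomological quotient $H^d(X)/\widetilde N^1 H^d(X)$ and not by a larger or different chunk of cohomology. This requires carefully matching the Beauville grading on $CH_0(X)$ with the niveau filtration $\widetilde N^\bullet$ on $H^\bullet(X)$ through the refined Chow–Künneth projectors, and checking that the "error terms" (the components of $\Gamma$ that do not see this quotient) genuinely annihilate $G^d$ — the latter presumably uses the finiteness statement $G^{d+1} = 0$ analogue and the fact that $G^d$ is killed by correspondences supported in lower niveau. A secondary technical point is ensuring that the refinement of the Chow–Künneth decomposition can be performed with symmetrically distinguished projectors, so that O'Sullivan's injectivity result is applicable to each summand; this is exactly what assumption $(\ast)$ and the constructions of Section~\ref{refinedCK} are set up to provide, so I would quote them directly rather than reprove them. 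The deduction of Conjecture~\ref{mq1} and the automorphism corollary is then immediate by setting $\Gamma = \Gamma_f - \Delta_X$.
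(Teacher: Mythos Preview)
Your proposal is correct and follows essentially the same route as the paper: one chooses the refined projector $\Pi_{d,0}$ to be symmetrically distinguished (your step~(1), the paper's Proposition~\ref{sdv}), observes that $\Gamma\circ\Pi_{d,0}$ is symmetrically distinguished and acts as zero on all of $H^{\ast}(X)$ by the hypothesis, hence is numerically and therefore (by O'Sullivan) rationally trivial, and concludes since $\Pi_{d,0}$ and $\pi_d$ act identically on $CH_0(X)$ (Remark~\ref{remCKoperate}) with image $G^d=\cap_{i=1}^d Pic^0(X)$. The paper's execution is slightly leaner than your sketch --- it uses only the one-sided composite $\Gamma\circ\Pi_{d,0}$ and only the refinement of $\pi_d$, rather than a full two-sided decomposition $\pi_{i',j'}\circ\Gamma\circ\pi_{i,j}$ --- but the ingredients and logic are the same.
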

We note that the assumption ($\ast$) in the Theorem~\ref{intromain} holds for many abelian varieties, e.g. listed in the Section~\ref{list}. Also symmetrically distinguished cycles on abelian varieties were defined by P. O'Sullivan in~\cite[Definition 6.2.1]{o}. 
 \\
As a consequence, we prove,
\begin{introtheorem}[See Corollary \ref{mc}]
\label{mt}
Let 
$X$ be a complex abelian variety of dimension d satisfying the assumption ($\ast$) as in the Section~\ref{refinedCK}. Then $X$ satisfies the Conjecture~\ref{mq1}. 
\end{introtheorem}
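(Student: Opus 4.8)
The plan is to deduce Theorem~\ref{mt} (Corollary~\ref{mc}) from the main Theorem~\ref{intromain} by applying it to the correspondence attached to an automorphism. First I would fix an automorphism $f\colon X\to X$ satisfying the hypothesis of Conjecture~\ref{mq1}, namely that $f_{\ast}$ is the identity on $H^d(X)/\widetilde{N}^1H^d(X)$, and set $\Gamma:=\Gamma_f-\Delta_X\in CH^d(X\times X)$. The first step is to observe that the hypothesis on $f$ translates exactly into the statement that $[\Gamma]_{\ast}=0$ on $H^d(X)/\widetilde{N}^1H^d(X)$: indeed $[\Gamma_f]_{\ast}-[\Delta_X]_{\ast}=f_{\ast}-\mathrm{Id}$ on $H^d(X)$, which descends to the quotient (since $\widetilde{N}^1$ is preserved by correspondences, as recalled in the Introduction via Eq.~(\ref{inclusion})) to give precisely $f_{\ast}-\mathrm{Id}=0$ there.

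The second step is to check that $\Gamma=\Gamma_f-\Delta_X$ is symmetrically distinguished, so that Theorem~\ref{intromain} applies. Here I would invoke O'Sullivan's results: for an abelian variety, the graph $\Gamma_f$ of an automorphism — in particular the diagonal $\Delta_X=\Gamma_{\mathrm{id}}$ — is symmetrically distinguished (the class of the diagonal and, more generally, graphs of homomorphisms composed with translations are symmetrically distinguished in O'Sullivan's sense~\cite[\S6]{o}), and the symmetrically distinguished cycles form a $\mathbf{Q}$-subalgebra closed under addition, so $\Gamma_f-\Delta_X$ is again symmetrically distinguished. I would also confirm that $X$ carries the assumption $(\ast)$ by hypothesis of the corollary, so no extra condition is needed.

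The third step is the application: Theorem~\ref{intromain} now yields $[\Gamma]_{\ast}=0$ on $\cap_{i=1}^{d}Pic^0(X)$, i.e. $f_{\ast}-\mathrm{Id}=0$ on $G^d=Pic^0(X)^{\cap d}\subseteq CH_0(X)$ (using that, by Definition~\ref{filtration0cycles}, $G^d$ is precisely the image of the $d$-fold intersection of classes in $Pic^0(X)$). The final bookkeeping step is to note that $f_{\ast}$ restricted to $G^d$ is well-defined — i.e. $f_{\ast}G^d\subseteq G^d$ — which follows from the compatibility of intersection products with the pushforward along the isomorphism $f$ (one has $f_{\ast}(D_1\cap\dots\cap D_d)=f_{\ast}D_1\cap\dots\cap f_{\ast}D_d$ and $f_{\ast}Pic^0(X)=Pic^0(X)$ since $f$ is an isomorphism), so that $f_{\ast}=\mathrm{Id}$ as an endomorphism of $G^d$, which is the assertion of Conjecture~\ref{mq1}.

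The main obstacle is not in this deduction — which is essentially formal once Theorem~\ref{intromain} is in hand — but rather lies entirely upstream in Theorem~\ref{intromain} itself; here the only genuine points requiring care are verifying that $\Gamma_f-\Delta_X$ is symmetrically distinguished (which hinges on O'Sullivan's structural results for abelian varieties and could fail to be immediate if $f$ is not the composition of a group homomorphism with a translation, though every automorphism of an abelian variety as a variety is of this form) and confirming that the hypothesis really passes to the quotient by $\widetilde{N}^1$ — both of which I expect to dispatch quickly by citing the relevant results recalled in the Introduction.
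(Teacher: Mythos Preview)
Your proposal is correct and follows exactly the same route as the paper: set $\Gamma=\Gamma_f-\Delta_X$, observe it is symmetrically distinguished (the paper cites Remark~\ref{remsd}(1), which records that $\Gamma_f=(1_X\times f)_\ast[X]$ is symmetrically distinguished by Theorem~\ref{symdist}(3), and likewise for $\Delta_X$), and then invoke Theorem~\ref{main}. Your extra verifications (that the map descends to the quotient, that $f_\ast G^d\subseteq G^d$) are harmless elaborations the paper leaves implicit; note also that in the paper ``automorphism of $X$'' means automorphism of the abelian variety, so your caveat about translations is unnecessary here.
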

Theorem~\ref{mt} can be applied to many abelian varieties listed in the Section~\ref{list}, including arbitrary abelian varieties of dimension atmost 5, arbitrary products of elliptic curves, etc.\\ 
We deduce Conjecture~\ref{mq} from Theorem~\ref{mt}, for suitable abelian varieties. 
\begin{introtheorem}\label{mtghc}
Let $X$ be a complex abelian variety of dimension $d$ satisfying the assumption ($\ast$) as above, $N^1H^d(X)=\widetilde{N}^1H^d(X)$ and furthermore GHC($X, 1, d$) holds true. 
Then $X$ satisfies the Conjecture~\ref{mq}. 
\end{introtheorem}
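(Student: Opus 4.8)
The plan is to deduce Conjecture~\ref{mq} from Theorem~\ref{mt}, which already asserts that $X$ satisfies Conjecture~\ref{mq1}. The only gap to bridge is that the cohomological hypothesis of Conjecture~\ref{mq} (that $f_\ast$ be the identity on $H^0(X,\Omega_X^d)$) is \emph{a priori} weaker than that of Conjecture~\ref{mq1} (that $f_\ast$ be the identity on $H^d(X)/\widetilde N^1 H^d(X)$). So I must show that, under the two extra assumptions $N^1H^d(X)=\widetilde N^1H^d(X)$ and GHC($X,1,d$), the hypothesis of Conjecture~\ref{mq} implies that of Conjecture~\ref{mq1}; once this is done, applying Theorem~\ref{mt} finishes the argument since the conclusions of the two conjectures are identical.

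Concretely, let $f\colon X\to X$ be an automorphism with $f_\ast\colon H^0(X,\Omega_X^d)\to H^0(X,\Omega_X^d)$ the identity, and set $\Gamma=\Gamma_f-\Delta_X$. As in the discussion preceding Conjecture~\ref{mq1}, $[\Gamma]_\ast\colon H^d(X)\to H^d(X)$ is a morphism of $\mathbf Q$-Hodge structures whose complexification vanishes on $H^0(X,\Omega_X^d)$; hence $\ker([\Gamma]_\ast)$ is a $\mathbf Q$-sub-Hodge structure of $H^d(X)$ containing $H^d(X)_{tr}$, so $[\Gamma]_\ast$ vanishes on $H^d(X)_{tr}$, i.e. $f_\ast=\mathrm{Id}$ on $H^d(X)_{tr}$. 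Now invoke the chain of surjections of $\mathbf Q$-Hodge structures from the Introduction,
\[
\frac{H^d(X)}{\widetilde N^1H^d(X)}\twoheadrightarrow\frac{H^d(X)}{N^1H^d(X)}\twoheadrightarrow\frac{H^d(X)}{H^d(X)_{tr}^{\perp}}\simeq H^d(X)_{tr}.
\]
The assumption $N^1H^d(X)=\widetilde N^1H^d(X)$ makes the first arrow an isomorphism, while GHC($X,1,d$) makes the second arrow an isomorphism, as recalled in the Introduction. Since the niveau filtration is stable under self-correspondences, $\widetilde N^1H^d(X)$ and $H^d(X)_{tr}^{\perp}$ are $[\Gamma]_\ast$-stable, and all the maps in the chain are $[\Gamma]_\ast$-equivariant; therefore the composite identification $H^d(X)/\widetilde N^1H^d(X)\simeq H^d(X)_{tr}$ intertwines the two induced actions. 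Consequently $[\Gamma]_\ast=0$ on $H^d(X)/\widetilde N^1H^d(X)$, i.e. $f_\ast=\mathrm{Id}$ on $H^d(X)/\widetilde N^1H^d(X)$, which is exactly the hypothesis of Conjecture~\ref{mq1}.

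It then remains only to apply Theorem~\ref{mt}: since $X$ satisfies ($\ast$), it satisfies Conjecture~\ref{mq1}, so from the conclusion just obtained we get $f_\ast=\mathrm{Id}\colon G^d\to G^d$, induced by the restriction of $f_\ast\colon CH_0(X)\to CH_0(X)$. This is precisely the assertion of Conjecture~\ref{mq} for $X$, and since $f$ was an arbitrary automorphism satisfying the hypothesis, $X$ satisfies Conjecture~\ref{mq}.

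I expect the substantive work to lie entirely upstream, in Theorem~\ref{intromain}/Theorem~\ref{mt}; the present deduction is essentially formal. The one point that warrants care is the equivariance of the isomorphism $H^d(X)/\widetilde N^1H^d(X)\simeq H^d(X)_{tr}$ under $[\Gamma_f-\Delta_X]_\ast$: one must check that the polarization-induced splitting defining $H^d(X)_{tr}^{\perp}$ and the niveau filtration transform functorially under the graph correspondence $\Gamma_f$, so that ``$f_\ast$ is the identity on the transcendental part'' genuinely transports to ``$f_\ast$ is the identity on $H^d(X)/\widetilde N^1H^d(X)$''. Because $\Gamma_f$ is the graph of an isomorphism (so $[\Gamma_f]_\ast$ is an invertible morphism of Hodge structures preserving the intersection pairing up to sign), this is routine, but it is the only step I would spell out in detail.
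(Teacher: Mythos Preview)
Your proposal is correct and is precisely the paper's approach: the paper's own proof is the single line that Theorem~\ref{mtghc} follows from Theorem~\ref{mt} together with the discussion following Conjecture~\ref{mq}, and you have simply spelled out that discussion. One minor note: your closing worry about whether $[\Gamma_f]_\ast$ preserves the intersection pairing is unnecessary, since under the hypotheses $H^d(X)_{tr}^\perp=\widetilde N^1H^d(X)$ and you already observed that the latter is $f_\ast$-stable by functoriality of the niveau filtration, which is all the equivariance you need.
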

Theorem~\ref{mtghc} follows from Theorem~\ref{mt} and the discussion following the Conjecture~\ref{mq}. 
We note that the Theorem~\ref{mtghc} can be applied to abelian varieties listed
in the Section~\ref{list}, for which the assumption GHC($X, 1, d$) holds true, which include among others:
\begin{enumerate}
\item abelian varieties of dimension $\leq2$,
\item abelian threefolds for which GHC($X, 1, 3$) holds true,
\item arbitrary products of elliptic curves. 
\end{enumerate}

Finally, we discuss various applications of Theorem~\ref{mt}.  
We consider the
Kummer surfaces and in general \textit{Generalized Kummer varieties} and their Chow groups of 0-cycles. 

D. Huybrechts~\cite{h} and C. Voisin~\cite{vo1} have proved the following theorem:
\begin{introtheorem}
\label{introhv}[Huybrechts-Voisin]
Let $f$ be an automorphism of a projective K3 surface $X$ of finite order which is symplectic (i.e. which acts as the identity on $H^{0}(X,\Omega_X^2))$, then $f_{\ast}$ acts as the identity on $CH_0(X)$.
\end{introtheorem}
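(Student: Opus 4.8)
The plan is to reduce the statement to showing that the correspondence $\Gamma := \Gamma_f - \Delta_X \in CH^2(X\times X)$ acts as $0$ on the Albanese kernel $A_0(X):=\ker(\deg\colon CH_0(X)\to\mathbf{Z})$ — on the degree summand $f_\ast$ is automatically the identity — and then to establish this following Voisin's argument \cite{vo1} in the case of a symplectic involution and Huybrechts' argument \cite{h} for arbitrary finite order.

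\textbf{Step 1 (cohomological input).} First I would observe that $[\Gamma]_\ast$ is cohomologically trivial off the algebraic part of $H^2$. Indeed $f$ acts as the identity on $H^0(X,\mathbf{Q})$ and $H^4(X,\mathbf{Q})$; by hypothesis it acts as the identity on $H^0(X,\Omega_X^2)=H^{2,0}(X)$; and since the transcendental Hodge structure $H^2(X)_{tr}$ has no proper nonzero sub-Hodge structure, any automorphism fixing $H^{2,0}(X)$ acts as the identity on $H^2(X)_{tr}$ (Nikulin; this part does not even require finiteness of the order). Hence $[\Gamma]_\ast=0$ on $H^0\oplus H^2(X)_{tr}\oplus H^4$, and the only possibly nonzero part of $[\Gamma]_\ast$ lives on $NS(X)_{\mathbf{Q}}$.

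\textbf{Step 2 (from cohomology to $CH_0$).} Now I would upgrade this to the equality $f_\ast=\mathrm{id}$ on $A_0(X)$. For a symplectic involution $\iota$, the fixed locus consists of $8$ points, $X/\iota$ has $8$ ordinary double points, and its minimal resolution $Y$ is again a K3 surface with $T(Y)\cong T(X)$; using the correspondence between $X$ and $Y$ coming from this construction, the explicit geometry of the eight nodes, and the Beauville--Voisin description of $CH_0$ of a K3 surface (the canonical point class $o_X$, and the fact that products of divisors and $c_2(X)$ are multiples of it), one obtains $\iota_\ast=\mathrm{id}$ on $A_0(X)$ — this is Voisin's theorem. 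For arbitrary finite order I would instead follow Huybrechts: using the triviality of $f^\ast$ on $H^2(X)_{tr}$ one produces an $f$-invariant Bridgeland stability condition $\sigma$ and a primitive isotropic Mukai vector $v$ such that the moduli space $M_\sigma(v)$ of $\sigma$-stable objects is a fine moduli space isomorphic to $X$ on which $f$ acts; the universal object yields an $f$-equivariant Fourier--Mukai equivalence $D^b(X)\xrightarrow{\ \sim\ }D^b(M_\sigma(v))\cong D^b(X)$, and analysing the action of this equivalence on $CH_0$ (which for a K3 surface is controlled by its cohomological action on the Mukai lattice) gives $f_\ast=\mathrm{id}$ on $A_0(X)$. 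Alternatively, by the Nikulin--Mukai classification there are only finitely many deformation types of pairs (K3 surface, finite cyclic group of symplectic automorphisms), so one could also reduce to a finite list, the order-$2$ case being the one handled above.

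\textbf{Step 3 and the main obstacle.} Combining Steps 1 and 2, $f_\ast$ is the identity on both the degree summand $\mathbf{Z}$ and on $A_0(X)$, hence on all of $CH_0(X)$. The entire difficulty is concentrated in Step 2: passing from vanishing on transcendental cohomology to vanishing on the Albanese kernel is exactly the relevant case of Bloch's conjecture, for which no general mechanism is available, so one must feed in substantial extra input — the explicit geometry of the eight-nodal quotient in the involution case, and Bridgeland stability together with the rigidity of Fourier--Mukai equivalences on $CH_0$ in the general finite-order case. This is also precisely where the finiteness of the order of $f$ is used.
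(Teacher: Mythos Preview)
The paper does not contain a proof of this statement: Theorem~\ref{introhv} (restated later as Theorem~\ref{hv}) is quoted as a known result of Huybrechts~\cite{h} and Voisin~\cite{vo1}, used only as motivation for the author's own Theorems~\ref{introkm} and~\ref{gkm} on Kummer surfaces and generalized Kummer varieties. There is therefore nothing in the paper to compare your proposal against.

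Your sketch is a reasonable outline of the original arguments in \cite{vo1} and \cite{h}, and your identification of Step~2 as the crux --- passing from triviality on $H^2(X)_{tr}$ to triviality on the Albanese kernel, which is an instance of Bloch's conjecture --- is exactly right. One small remark: in Step~1 you say the transcendental lattice ``has no proper nonzero sub-Hodge structure''; what is actually used is that any morphism of Hodge structures from $H^2(X)_{tr}$ to itself fixing $H^{2,0}$ must be the identity (irreducibility alone would only give a scalar), which is indeed the standard argument via the polarization.
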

Motivated by this result, we prove a similar theorem for the Kummer surface $Km(X)$ associated to a complex abelian surface $X$, but for automorphisms (possibly of infinite order) induced from the complex abelian surface $X$. 

More precisely, we prove the following:
\begin{introtheorem}[See Theorem~\ref{km}]
\label{introkm}
Let $g$ be an automorphism of the Kummer surface $Km(X)$ associated to the abelian surface $X$, which is induced from an automorphism $f$ of $X$ as an abelian variety. Further assume that $f$ is a symplectic automorphism of $X$. Then $g$ acts as the identity on $CH_0(Km(X)).$
\end{introtheorem}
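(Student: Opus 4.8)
The plan is to deduce Theorem~\ref{introkm} from Theorem~\ref{mt} by relating $CH_0(Km(X))$ to the part $G^2 = \cap_{i=1}^{2} Pic^0(X)$ of the filtration on $CH_0(X)$ for the abelian surface $X$, and then by checking that the hypothesis ``$f$ symplectic'' forces the cohomological condition of Conjecture~\ref{mq1} to hold for $f$ acting on $X$. Since any abelian surface satisfies the assumption $(\ast)$ (it appears in the list in Section~\ref{list}, being of dimension $\leq 5$), and $\widetilde{N}^1 H^2(X) = N^1 H^2(X)$ for a surface with GHC($X,1,2$) automatic, the quotient $H^2(X)/\widetilde{N}^1 H^2(X)$ is identified with the transcendental cohomology $H^2(X)_{tr}$. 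A symplectic automorphism $f$ of $X$ as an abelian variety acts as the identity on $H^0(X,\Omega^2_X)$, hence as the identity on $H^2(X)_{tr}$ (since that is the smallest $\mathbf{Q}$-sub Hodge structure whose complexification contains $H^{0,2}$, and $f_\ast$ is a morphism of Hodge structures fixing it pointwise after complexification — indeed $f_\ast$ being finite-order and fixing the generating line forces it to be the identity on $H^2(X)_{tr}$). So Theorem~\ref{mt} applies: $f_\ast$ is the identity on $G^2 = \cap_{i=1}^2 Pic^0(X) \subseteq CH_0(X)$.

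Next I would set up the geometry of the Kummer surface. Recall $Km(X)$ is the minimal resolution of $X/\langle \pm 1\rangle$; there is a correspondence (or a chain of correspondences through the blow-up) relating $CH_0(Km(X))$ and $CH_0(X)$. Concretely, writing $\pi\colon \widetilde{X} \to X$ for the blow-up at the $16$ two-torsion points and $q\colon \widetilde{X} \to Km(X)$ for the quotient double cover, one has $q_\ast q^\ast = 2\cdot \mathrm{id}$ on $CH_0(Km(X))$ with $\mathbf{Q}$-coefficients, so $CH_0(Km(X)) \cong CH_0(\widetilde{X})^{+}$, the invariants under the involution; and $CH_0(\widetilde{X}) \cong CH_0(X) \oplus (\text{classes of exceptional points})$, with the exceptional part being torsion-free of the expected rank and on which the degree-zero part is trivial modulo the rational-equivalence classes of the blown-up points. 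The upshot I want is: the degree-zero, ``Albanese-trivial'' part of $CH_0(Km(X))$ — equivalently the subgroup relevant to the statement — is isomorphic to the $(+1)$-eigenspace of the involution $\iota = [-1]_\ast$ acting on $F^2 CH_0(X)$, the second step of Bloch's filtration, which for an abelian surface coincides with $G^2 = \cap_{i=1}^{2} Pic^0(X)$. Here I would invoke Bloch's description (from \cite{bl1}) of the filtration via Pontryagin products / the Fourier transform, together with the fact that $[-1]_\ast$ acts as $+1$ on $G^2$ (it acts as $(-1)^i$ on $G^i$, hence trivially on $G^2$), so in fact the involution is the identity on $G^2$ and the $(+1)$-eigenspace is everything. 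Thus $CH_0(Km(X))_{\deg 0, \mathrm{alb}\text{-}triv} \cong G^2(X)$ as groups, compatibly with the action induced by $f$ on both sides (since $f$ commutes with $[-1]$ and with the resolution maps).

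With this dictionary in place, the conclusion is assembled as follows. The automorphism $g$ of $Km(X)$ induced by $f$ corresponds, under the isomorphism above, to $f_\ast$ acting on $G^2(X)$; by the previous paragraph $f_\ast = \mathrm{id}$ on $G^2(X)$, so $g_\ast$ is the identity on the Albanese-trivial part of $CH_0(Km(X))$. It remains to handle the complementary pieces: $g_\ast$ acts as the identity on $\mathrm{CH}_0$ modulo this subgroup because that quotient is controlled by $H^2$ of $Km(X)$ in degrees that $g$ fixes — the degree map and the Albanese (which is trivial for the rational surface $Km(X)$, so there is no Albanese part) together with the action on divisor classes, and a symplectic $g$ acts trivially on the relevant Hodge pieces; more simply, for $Km(X)$ a K3 surface $CH_0$ is the direct sum of $\mathbf{Z}$ (the degree) and the Albanese-trivial part (Roitman), so once the latter is fixed and the degree is obviously fixed, $g_\ast = \mathrm{id}$ on all of $CH_0(Km(X))$.

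The main obstacle I anticipate is making the comparison isomorphism $CH_0(Km(X))_{\mathrm{alb}\text{-}triv} \cong G^2(X)$ precise and, crucially, $f$-equivariant: one must control how the exceptional divisors of the resolution and the ramification of the double cover interact with rational equivalence of $0$-cycles, check that the $16$ exceptional $\mathbf{P}^1$'s contribute nothing to the Albanese-trivial part (their $0$-cycles of degree $0$ are rationally trivial, being cycles on $\mathbf{P}^1$'s), and verify that the identification respects the filtration $G^\bullet$ on the abelian-surface side — i.e. that Bloch's $F^2 CH_0(X)$ genuinely equals $\cap_{i=1}^{2} Pic^0(X)$ for an abelian surface and that $[-1]_\ast$ is the identity there. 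This is where I would be most careful; everything else is a formal consequence of Theorem~\ref{mt} and standard facts about K3 and Kummer surfaces.
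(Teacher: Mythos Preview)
Your approach is essentially the same as the paper's: reduce to Theorem~\ref{mt} for the abelian surface $X$, transport the conclusion $f_\ast=\mathrm{id}$ on $G^2=T(X)$ across an isomorphism $T(Km(X))\cong T(X)$, and then use $\mathrm{Alb}(Km(X))=0$ plus the degree to finish. The paper obtains the isomorphism $T(Km(X))\cong T(X)$ by quoting Bloch--Kas--Lieberman (\cite[Corollary~A.10]{blochetal}) together with Roitman's torsion theorem, rather than rebuilding it through the blow-up/quotient picture as you do, but the content is the same.

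Two slips to fix. First, your parenthetical justification that $f_\ast=\mathrm{id}$ on $H^2(X)_{tr}$ invokes ``$f_\ast$ being finite-order''; this is precisely what the theorem does \emph{not} assume (the whole point, stressed in the paper, is to allow infinite-order $f$). The correct argument---already in the paper's Introduction---is simply that $\ker(f_\ast-\mathrm{id})\subset H^2(X,\mathbf{Q})$ is a sub-$\mathbf{Q}$-Hodge structure whose complexification contains $H^{2,0}$, hence contains $H^2(X)_{tr}$; no finiteness is needed. Second, $Km(X)$ is a K3 surface, not a rational surface; you catch this yourself a line later, but the earlier phrase should be corrected.
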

In view of the Theorem~\ref{introhv}, the Theorem~\ref{introkm} deals with automorphisms of the Kummer surface which are possibly of infinite order. 

We further consider higher dimensional generalization of this setup, where $n=1$ is the earlier case.
Given a complex abelian surface, we consider natural hyperk\"ahler varieties $K_n$ associated to the abelian surface for $n\geq 2$ , introduced by A. Beauville~\cite{b83}, called \textit{Generalized Kummer varieties}. 

We first study the structure of their Chow group of 0-cycles with $\mathbf{Q}$-coefficients.
%

 One has a Beauville type decomposition of the 0-cycles of $K_n$ (see Eq.~(\ref{0cyclesonkummer})), 
 \begin{center}
$\displaystyle CH_0(K_{n})=\bigoplus_{s=0}^{2n} CH_0(K_{n})_s$
\end{center}

As a consequence, we prove the following theorem:
\begin{introtheorem}[see Theorem~\ref{gkm}]
\label{gkmi}
Let $X$ be a complex abelian surface. Let $K_n$ be the ${n^{th}}$ Generalized Kummer variety associated to $X.$
Suppose $f_n$ is the automorphism of $K_n$ induced from an automorphism $f$ of $X$ such that $f$ is symplectic on $X$ (i.e. which acts as the identity on $H^0(X,\Omega_X^2)$). Then $f_n$ is symplectic and $f_{n}$ acts as the identity on the subgroup $CH_0(K_n)_{2n}.$
\end{introtheorem}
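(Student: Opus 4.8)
The plan is to reduce the assertion for $K_n$ to Theorem~\ref{mt} (equivalently Corollary~\ref{mc}) applied to the abelian surface $X$ itself; here $n\ge 2$, the case $n=1$ being Theorem~\ref{introkm}. Recall that $K_n$ is the fibre over $0_X$ of the composite $X^{[n+1]}\xrightarrow{\rho}\mathrm{Sym}^{n+1}X\xrightarrow{+}X$, with $\rho$ the Hilbert--Chow morphism and $+$ the sum. Since $f$ is a homomorphism of abelian varieties, $f(0_X)=0_X$ and $\mathrm{Sym}^{n+1}(f)$ commutes with $+$, so $f^{[n+1]}$ (the automorphism of $X^{[n+1]}$ induced by $f$, compatible with $\mathrm{Sym}^{n+1}(f)$ under $\rho$) preserves $K_n$ and $f_n:=f^{[n+1]}|_{K_n}$ is well defined. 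For the symplectic claim I would use Beauville's description of $H^2(K_n,\mathbf{Q})$, under which $H^{2,0}(K_n)$ is identified with $H^{2,0}(X)$ while $f_n$ acts through $f$ on the $H^2(X,\mathbf{Q})$-part and trivially on the canonically defined class $\delta$; as $f_\ast=\mathrm{id}$ on $H^{2,0}(X)=H^0(X,\Omega^2_X)$, also $f_{n\ast}=\mathrm{id}$ on $H^{2,0}(K_n)$, i.e.\ $f_n$ is symplectic.

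For the $0$-cycles, set $A:=\ker(+\colon X^{n+1}\to X)$, an abelian variety of dimension $2n$ carrying a linear action of $\mathfrak{S}_{n+1}$ by coordinate permutations, so that $K_n$ is a crepant resolution of $A/\mathfrak{S}_{n+1}$. I would invoke the structure of $CH_0(K_n)$ underlying the Beauville-type decomposition $CH_0(K_n)=\bigoplus_{s=0}^{2n}CH_0(K_n)_s$ of Eq.~(\ref{0cyclesonkummer}): because the fibres of $\rho$ over the singular locus are swept out by rational curves, the orbifold/exceptional contributions vanish in top degree, leaving a natural isomorphism $CH_0(K_n)\cong CH_0(A)^{\mathfrak{S}_{n+1}}$ that carries $CH_0(K_n)_s$ onto $CH_0(A)_{(s)}^{\mathfrak{S}_{n+1}}$, where $CH_0(A)=\bigoplus_s CH_0(A)_{(s)}$ is the Beauville eigenspace decomposition (stable under $\mathrm{Aut}(A)$, since endomorphisms commute with multiplication by integers). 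In particular $CH_0(K_n)_{2n}\cong CH_0(A)_{(2n)}^{\mathfrak{S}_{n+1}}$. Fixing an isomorphism of abelian varieties $A\cong X^n$ that intertwines the diagonal $f$-action on $A$ with $f^{\times n}$ (drop the last coordinate), the Künneth formula for the Beauville decomposition of $0$-cycles on a product of abelian varieties gives $CH_0(A)_{(2n)}\cong CH_0(X)_{(2)}^{\otimes n}$ — in $CH_0(X^n)_{(2n)}=\bigoplus_{s_1+\cdots+s_n=2n}\bigotimes_i CH_0(X)_{(s_i)}$ the bound $s_i\le 2$ forces every $s_i=2$ — and under this identification $f_{n\ast}$ becomes $f_\ast^{\otimes n}$.

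It remains to apply the main theorem to $X$. As an abelian surface, $X$ satisfies assumption $(\ast)$, and $\widetilde{N}^1H^2(X)=N^1H^2(X)=NS(X)_{\mathbf{Q}}$ while GHC$(X,1,2)$ holds automatically for surfaces, so $H^2(X)/\widetilde{N}^1H^2(X)\cong H^2(X)_{tr}$. Since $f$ is symplectic, $f_\ast$ is the identity on $H^{2,0}(X)$; being a morphism of $\mathbf{Q}$-Hodge structures, its fixed locus is a sub-Hodge structure whose complexification contains $H^{2,0}(X)$, hence contains $H^2(X)_{tr}$, so $f_\ast=\mathrm{id}$ on $H^2(X)/\widetilde{N}^1H^2(X)$. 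Theorem~\ref{mt} (Corollary~\ref{mc}) then yields $f_\ast=\mathrm{id}$ on $G^2=CH_0(X)_{(2)}$. Consequently $f_\ast^{\otimes n}=\mathrm{id}$ on $CH_0(X)_{(2)}^{\otimes n}$, and restricting to $\mathfrak{S}_{n+1}$-invariants, $f_{n\ast}=\mathrm{id}$ on $CH_0(K_n)_{2n}$, as desired.

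The main obstacle is the second step: identifying the deepest graded piece $CH_0(K_n)_{2n}$ precisely with $CH_0(A)_{(2n)}^{\mathfrak{S}_{n+1}}$, and verifying that this identification is equivariant for $f_{n\ast}$ versus the diagonal action of $f_\ast$. This rests on the comparison between the Chow groups — indeed the Chow motives — of $K_n$, $X^{[n+1]}$ and $X^{n+1}$ through the Hilbert--Chow morphism, together with the vanishing of the orbifold/exceptional contributions in top degree and the compatibility of the decomposition $CH_0(K_n)=\bigoplus_s CH_0(K_n)_s$ with the eigenspace decomposition of $CH_0(A)$. Granting this structural input, the conclusion is a formal consequence of Theorem~\ref{mt} for $X$ and the Künneth decomposition of $0$-cycles on products of abelian varieties; no further hyperkähler geometry is needed.
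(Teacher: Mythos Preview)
Your proposal is correct and follows essentially the same route as the paper: reduce to Corollary~\ref{mc} for the abelian surface $X$, identify $CH_0(K_n)_{2n}$ with $CH_0(X_0^{n+1})_{2n}^{\mathfrak{S}_{n+1}}\cong CH_0(X^n)_{2n}^{\mathfrak{S}_{n+1}}$ via the Hilbert--Chow morphism and the quotient map, and then use that the top Beauville piece of $CH_0(X^n)$ is hit by $CH_0(X)_2^{\otimes n}$ on which $f_\ast^{\otimes n}$ is the identity. One small overclaim: you assert a K\"unneth \emph{isomorphism} $CH_0(X^n)_{(2n)}\cong CH_0(X)_{(2)}^{\otimes n}$, whereas the paper (and your argument) only needs the \emph{surjectivity} of the external product map $CH_0(X)_2^{\otimes n}\twoheadrightarrow CH_0(X^n)_{2n}$ together with its $f$-equivariance; since surjectivity suffices to transport the identity action, this does not affect the validity of your proof.
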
 
Throughout the paper, we will be using the following notations:
 \paragraph{Notations}
$X$ will denote a smooth projective variety of pure dimension $d$ over an algebraically closed field $k$, a subfield of the field of complex numbers $\mathbf{C}$ (in particular of characteristic 0), unless otherwise specified. 
 The Chow group of $X$ of codimension $i$-cycles with $\mathbf{Q}$ coefficients is denoted by $CH^i(X)$. If $X$ is an equi-dimensional variety of dimension $d$, $CH_i(X):=CH^{d-i}(X)$. 
 $CH_{num}^{i}(X)$ is $CH^i(X)$ modulo numerical equivalence.\\
\end{section}

\begin{section}{Preliminaries}\label{maintheorem}
\subsection{Filtrations on Cohomology and their Properties}
Given a smooth projective variety $X$ of dimension $d$ over an algebraically closed field $k\subset\mathbf{C}$, we associate the singular cohomology groups \begin{center}
$H^i(X):=H^i(X(\mathbf{C}), \mathbf{Q}) $ for $0\leq i\leq 2d,$
\end{center}
and Borel-Moore homology groups 
\begin{center}
$H_i(X):=H_i(X(\mathbf{C}), \mathbf{Q}) $ for $0\leq i\leq 2d.$
\end{center}

We will identify the Borel-Moore homology groups $H_i(X)$ with the singular cohomology groups $H^{2d-i}(X):=H^{2d-i}(X(\mathbf{C}), \mathbf{Q})$ via Poincare duality. We will consider $\mathbf{Q}$-Hodge structure on $H^i(X)$ 
, where 
\begin{center}
$H^{p,q}(X):=H^{q}(X, \Omega_X^{p})$ for $p, q\geq 0, p+q=i$.
\end{center}
\begin{definition}\label{hodgefil}
The Hodge decomposition induces a decreasing filtration, known as the \textbf{Hodge filtration} on $H^i(X, \mathbf{C})$, given by 
\begin{center}
$F^jH^i(X, \mathbf{C})= \oplus_{a\geq j} H^{a,i-a}(X)$ for $j\geq 0$,
\end{center}
where $H^i(X, \mathbf{C}):=H^i(X)\otimes_{\mathbf{Q}} \mathbf{C}.$
\end{definition}
Here we recall two filtrations on the singular cohomology groups, following the exposition in~\cite{v}.
The coniveau filtration on the (co) homology groups has been studied by many authors including Bloch-Ogus~\cite{BO}, Jannsen~\cite{j}. 
\begin{definition}
\label{con}
The \textbf{Coniveau (or arithmetic) filtration} on $H^i(X)$ is given by 
\begin{align*}
N^jH^i(X):&=\sum_{Z\subset X} \text{Ker} ( H^{i}(X)\to H^i(X-Z))\\
                 &=\sum_{Z\subset X} \text{Im} ( H^{i}_Z(X)\to H^i(X))\\
                 &= \sum_{f:Y\to X} \text{Im} (f_{\ast}: H^{i-2j}(Y)\to H^i(X))
\end{align*}
where $Z$ ranges over all closed subschemes of $X$ of codimension atleast $j$ and $f:Y\to X$ varies over all morphisms $f$ from smooth projective varieties $Y$ of dimension atmost $\dim X-j$ to $X$.
\end{definition}


There is yet another filtration called Niveau filtration which was implicit in~\cite{sch93}. Also the Niveau filtration was studied along with other filtrations on the homology by Friedlander-Mazur~\cite{frma}, where it was called Correspondence filtration. The Niveau filtration is more naturally defined on homology groups rather than cohomology groups.
\begin{definition}
\label{nev}
 The  \textbf{Niveau (or correspondence) filtration} on $H_i(X)$ can be defined as 
\begin{center}
$\widetilde{N}^{j}H_i(X):= \sum Im (\Gamma_{*}: H_{i-2j'}(W)\to H_i(X))$
\end{center}
where the sum ranges over all integers $j' \geq j$, all smooth projective varieties $W$ and over all correspondences $\Gamma\in CH^{d-j'}(W\times X)$. 

\end{definition}

Actually the condition $j'\geq j$ and flexibility over dimension of $W$ is not needed as if $j'> j$ then replace $W$ by $\mathbf{P}^{j'-j}\times W$ and $\Gamma$ by ${0}\times \Gamma \in CH^{d-j}(\mathbf{P}^{j'-j}\times W\times X)$. Thus can take $j'=j$ in the definition of niveau filtration. Further is is also possible to restrict to $W$ of dimension $i-2j$. If dim $W> i-2j$, then any smooth linear section $u: W'\to W$ of dimension $i-2j$ induces a surjection $H_{i-2j}(W')\to H_{i-2j}(W)$. Replace $W$ by $W'$ and $\Gamma$ by $\Gamma\circ u$. If dim $W< i-2j$, then replace $W$ by $\mathbf{P}^m\times W$ where $m=i-2j-\dim W$ and $\Gamma$ by $\mathbf{P}^m\times \Gamma\in CH^{d-j}(\mathbf{P}^m\times W\times X)$.
Therefore we have 
\begin{center}
$\widetilde{N}^{j}H_i(X)= \sum Im (\Gamma_{*}: H_{i-2j}(W)\to H_i(X))$
\end{center}
where the sum runs over all smooth projective varieties $W$ of dimension $i-2j$ and all correspondences $\Gamma\in CH^{d-j}(W\times X)=CH_{i-j}(W\times X).$ 
Because $H_i(X)$ is a finite dimensional $\mathbf{Q}$-vector space, we can find finitely many $W_s$ such that $\Gamma_s$ is a correspondence from $W_s$ to $X$ and take $W_{i,j} = W_1\sqcup\cdots\sqcup W_r$ and $\Gamma_{i,j}=\sqcup_s(\Gamma_s)$ satisfying $W_{i,j} $ is a smooth projective variety of pure dimension $i-2j$ and the correspondence $\Gamma_{i,j}\in CH_{i-j}(W_{i,j}\times X)$ such that 
\begin{equation}\label{nivsimple}
\widetilde{N}^jH_i(X)= (\Gamma_{i,j})_{*}H_{i-2j}(W_{i,j}).
\end{equation}
One defines the \textbf{Niveau filtration on Cohomology groups} as follows
\begin{align}
\widetilde{N}^jH^i(X):=\widetilde{N}^{j-i+d}H_{2d-i}(X).
\end{align}

These two filtrations are related to each other as follows:
\begin{enumerate}
\item $N^{\bullet}$ and $\widetilde{N}^{\bullet}$ are decreasing filtrations such that
\begin{align}
H^i(X)&=N^0H^i(X)\supseteq N^1H^i(X)\supseteq\cdots\supseteq N^{\lfloor i/2\rfloor} H^i(X)\supseteq N^{\lfloor i/2\rfloor+1} H^i(X)=0 \\ 
H^i(X)&=\widetilde{N}^0 H^i(X)\supseteq\widetilde{N}^1H^i(X)\supseteq\cdots\supseteq\widetilde{N}^{\lfloor i/2\rfloor} H^i(X)\supseteq \widetilde{N}^{\lfloor i/2\rfloor+1} H^i(X)=0
\end{align}
\item By the modified characterization of the filtrations we have the inclusions 
\begin{equation}\label{inclusion}
\widetilde{N}^{j}H^i(X)\subseteq N^jH^i(X)\subseteq F^jH^i(X, \mathbf{C})\cap H^i(X).
\end{equation}
We say that $X$ satisfies \textbf{Generalized Hodge Conjecture} proposed by A. Grothendieck (denoted by \textbf{GHC($X, p, i$)}), if
the maximal $\mathbf{Q}$-Hodge structure contained in $F^pH^i(X, \mathbf{C})\cap H^i(X)$ is $N^pH^i(X)$.

 By the Lefschetz theorem the cup product by the cohomology class by an ample line bundle on $W$ gives an isomorphism of $H_i(W)\to H^i(W) $ for $i=0,1$. Hence $\widetilde{N}^{\lfloor i/2\rfloor} H_i(X)=N^{\lfloor i/2\rfloor} H_i(X).$ More generally, it was conjectured by Friedlander and Mazur~\cite[p. 71]{frma} that these two filtrations agree for $k=\mathbf{C}$. Vial~\cite[Proposition 1.1]{v} has shown that, if the Lefschetz standard conjecture holds for all smooth projective varieties of dimension $<$ dim $X$, then the two filtrations agree if and only if the Lefschetz standard conjecture holds for $X$. 
\end{enumerate}
\subsection{Chow-K\"unneth Decomposition}\label{CK}
Next we recall some terminology from the theory of Chow motives.
\begin{definition}
$X$ of dimension $d$ is said to possess a Chow-K\"unneth decomposition, if there is a collection $\{{\pi_i}\}_{i=0}^{2d}$ of codimension $d$ cycles on $X\times X$ such that the following relations hold in $CH^d(X\times X).$
\begin{enumerate}
\item $\displaystyle\Delta_{X}=\sum_{i=0}^{2d} \pi_i$, where $\Delta_X$ is the class of the diagonal of $X$,
\item ${^t{\pi_i}}=\pi_{2d-i}$, (duality)
\item $\pi_i\circ\pi_i=\pi_i$ (idempotent), $\pi_i\circ\pi_j= 0$ if $i\neq j$ (mutually orthogonal) and 
\item $\pi_i$ acts on $H^j(X, \mathbf{Q})$ as $\delta_{ij}Id_{H^j(X, \mathbf{Q})}$. 
\end{enumerate}
\end{definition}
Some examples of smooth projective varieties admitting a Chow-K\"unneth decomposition are curves, surfaces. 
For abelian varieties $X$, we have the multiplication map by $n:X\to X$ for $n\in\mathbf{Z}^{\ast}$, which induces $n_{\ast}: CH^i(X)\to CH^i(X).$ 
A. Beauville proved the following theorem:
\begin{theorem}[Beauville~\cite{b}]\label{bdec}
For $X$ an abelian variety of dimension $d$, there is a decomposition of $CH^i(X)$ given by  
\begin{center}
$\displaystyle CH^i(X)=\bigoplus_{s=i-d}^iCH^i(X)_s$,
\end{center}
where 
\begin{center}
$CH^i(X)_s=\{\alpha\in CH^i(X)~|~n_{\ast}\alpha=n^{2d-2i+s}\alpha$ for all $n\in\mathbf{Z}^{\ast}\}.$
\end{center}
\end{theorem}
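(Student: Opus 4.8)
The plan is to run Beauville's argument via the Fourier transform on Chow groups. Let $\widehat{X}$ be the dual abelian variety, $\mathcal{P}$ the Poincar\'e line bundle on $X\times\widehat{X}$, $\ell=c_1(\mathcal{P})\in CH^1(X\times\widehat{X})$, and $p_X,p_{\widehat{X}}$ the two projections. Define the Fourier transform $\mathcal{F}_X\colon CH^*(X)\to CH^*(\widehat{X})$ by $\mathcal{F}_X(\alpha)=p_{\widehat{X},*}\bigl(p_X^*\alpha\cdot\exp(\ell)\bigr)$ and its graded pieces $\mathcal{F}_k(\alpha)=p_{\widehat{X},*}\bigl(p_X^*\alpha\cdot\ell^k/k!\bigr)$, so that $\mathcal{F}_X=\sum_{k\ge 0}\mathcal{F}_k$ and $\mathcal{F}_k$ carries $CH^i(X)$ into $CH^{i+k-d}(\widehat{X})$. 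I would take two facts as input: (i) $\mathcal{F}_{\widehat{X}}\circ\mathcal{F}_X=(-1)^d\,[-1]_X^*$, so $\mathcal{F}_X$ is an isomorphism; and (ii) for $n\in\mathbf{Z}^*$ one has $([n]\times\mathrm{id}_{\widehat{X}})^*\mathcal{P}\cong\mathcal{P}^{\otimes n}$, hence $([n]\times\mathrm{id})^*\ell=n\ell$ (here $[n]\colon X\to X$ is multiplication by $n$, whose pullback and pushforward are the $n^*$, $n_*$ of the statement).

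The computation at the core is then short. Since $([n]\times\mathrm{id})^*(\ell/n)=\ell$, we get $\exp(\ell)=([n]\times\mathrm{id})^*\exp(\ell/n)$; feeding this into the definition of $\mathcal{F}_X$, together with $p_X\circ([n]\times\mathrm{id})=[n]\circ p_X$, $p_{\widehat{X}}\circ([n]\times\mathrm{id})=p_{\widehat{X}}$, and the projection formula for the isogeny $[n]\times\mathrm{id}$ (of degree $n^{2d}$), yields for every $\alpha\in CH^i(X)$
\[
\mathcal{F}_X\bigl([n]^*\alpha\bigr)=n^{2d}\,p_{\widehat{X},*}\bigl(p_X^*\alpha\cdot\exp(\ell/n)\bigr)=\sum_{k\ge 0}n^{2d-k}\,\mathcal{F}_k(\alpha).
\]
Applying $\mathcal{F}_X^{-1}$ gives $[n]^*\alpha=\sum_{k\ge 0}n^{2d-k}\alpha_k$ with $\alpha_k:=\mathcal{F}_X^{-1}\mathcal{F}_k(\alpha)\in CH^i(X)$; in particular $n\mapsto[n]^*\alpha$ is a polynomial in $n$. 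Moreover $\mathcal{F}_k(\alpha)\in CH^{i+k-d}(\widehat{X})$ vanishes unless $d-i\le k\le 2d-i$, so $\alpha_k=0$ outside that range.

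To extract the decomposition I would argue by pure linear algebra. Substituting $[nm]^*=[m]^*\circ[n]^*$ into $[n]^*\alpha=\sum_k n^{2d-k}\alpha_k$ and using that the power functions $n\mapsto n^{j}$ are linearly independent, one finds $[m]^*\alpha_k=m^{2d-k}\alpha_k$ for all $m$ and all $k$; thus each $\alpha_k$ is a simultaneous eigenvector of the commuting operators $[m]^*$, and $\alpha=[1]^*\alpha=\sum_k\alpha_k$. Hence $CH^i(X)=\bigoplus_{d-i\le k\le 2d-i}E_k$ with $E_k=\{x\in CH^i(X):[m]^*x=m^{2d-k}x\ \forall m\in\mathbf{Z}^*\}$, the sum direct by the same linear independence. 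Finally the projection formula $[m]_*[m]^*=m^{2d}\cdot\mathrm{id}$ shows $[m]_*$ acts on $E_k$ by $m^k$; writing $k=2(d-i)+s$ turns this eigenvalue into $m^{2d-2i+s}$ and the range $d-i\le k\le 2d-i$ into $i-d\le s\le i$, so $E_k=CH^i(X)_s$ and $CH^i(X)=\bigoplus_{s=i-d}^{i}CH^i(X)_s$ as claimed.

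The step I expect to be the main obstacle is establishing input (i), i.e.\ that $\mathcal{F}_X$ is an isomorphism: this rests on the inversion identity $\mathcal{F}_{\widehat X}\circ\mathcal{F}_X=(-1)^d[-1]^*$, proved by a see-saw and theorem-of-the-square analysis of the Poincar\'e bundle on $X\times\widehat X\times\widehat X$ together with the vanishing $p_{\widehat X,*}(\exp(\ell))=(-1)^d\,[0_{\widehat X}]$. One could instead bypass the Fourier transform and prove polynomiality of $n\mapsto[n]^*\alpha$ by hand, starting from the theorem of the cube on $\mathrm{Pic}(X)$ (which gives $[n]^*L=\tfrac{n^2+n}{2}L+\tfrac{n^2-n}{2}[-1]^*L$) and bootstrapping through $X^n$; but then pinning down the sharp range $i-d\le s\le i$ needs extra input, while the Fourier argument produces it for free, so I would prefer the route above.
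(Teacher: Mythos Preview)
The paper does not give its own proof of this statement: Theorem~\ref{bdec} is quoted from Beauville's paper~\cite{b} and used as a black box. So there is no in-paper argument to compare against.

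That said, your proposal is a faithful outline of Beauville's original proof in~\cite{b}: define the Fourier transform via $\exp(c_1(\mathcal{P}))$, use the identity $([n]\times\mathrm{id})^*\ell=n\ell$ to show $[n]^*$ acts polynomially, and read off the eigenspace decomposition by linear independence of the power functions. The conversion from the $[m]^*$-eigenvalue $m^{2d-k}$ to the $[m]_*$-eigenvalue $m^{2d-2i+s}$ via $k=2d-2i+s$ and the range bookkeeping are correct. Your identification of the inversion formula $\mathcal{F}_{\widehat X}\circ\mathcal{F}_X=(-1)^d[-1]^*$ as the substantive input is also right; Beauville establishes it exactly by the see-saw/theorem-of-the-square analysis you describe. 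One small cosmetic point: in the linear-independence step you should work over $\mathbf{Q}$ (the Chow groups here have rational coefficients), since otherwise a Vandermonde argument on eigenvalues does not immediately split the sum; you are implicitly using this, but it is worth saying.
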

\begin{conjecture}[Beauville's Conjecture]\label{beauconj}
$CH^i(X)_s=0$ for $s<0.$
\end{conjecture}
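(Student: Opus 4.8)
The plan is to separate the statement into the \emph{boundary} codimensions, where it can be established outright, and the \emph{interior} codimensions, where it is genuinely deep. Recall first that Theorem~\ref{bdec} already forces $CH^i(X)_s=0$ unless $i-d\le s\le i$; the content of the conjecture is therefore only the sharpened lower bound $s\ge 0$. For $i=d$ the window $[i-d,i]=[0,d]$ already has non-negative lower end, so there is nothing to prove, and for $i=0$ the group $CH^0(X)=\mathbf{Q}\cdot[X]$ is one-dimensional with $[X]\in CH^0(X)_0$, so again every negative piece vanishes. The first substantive case is $i=1$: here $CH^1(X)=Pic(X)_{\mathbf{Q}}$ splits under $[-1]^{\ast}$ into its symmetric part $NS(X)_{\mathbf{Q}}$ and its antisymmetric part $Pic^0(X)_{\mathbf{Q}}$, and computing $n^{\ast}$-eigenvalues (using $n^{\ast}D=n^2D$ for symmetric $D$ and $n^{\ast}D=nD$ for $D\in Pic^0$, against the normalization $n^{\ast}=n^{2i-s}$ on $CH^i(X)_s$) identifies $CH^1(X)_0=NS(X)_{\mathbf{Q}}$ and $CH^1(X)_1=Pic^0(X)_{\mathbf{Q}}$, all other graded pieces being zero. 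This settles $i=1$, and in particular $CH^1(X)_s=0$ for $s<0$.

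Next I would bring in the Fourier transform $\mathcal{F}\colon CH^{\ast}(X)_{\mathbf{Q}}\to CH^{\ast}(\hat X)_{\mathbf{Q}}$ attached to the Poincar\'e bundle $\mathcal{P}$ on $X\times\hat X$, namely $\mathcal{F}(\alpha)=p_{2\ast}\bigl(p_1^{\ast}\alpha\cdot\exp(c_1(\mathcal{P}))\bigr)$, together with Beauville's two basic identities: the inversion formula $\mathcal{F}\circ\mathcal{F}=(-1)^{d}[-1]^{\ast}$ and the compatibility with the grading
\begin{center}
$\mathcal{F}\bigl(CH^i(X)_s\bigr)=CH^{\,d-i+s}(\hat X)_s.$
\end{center}
The grading identity shows that the conjecture for $X$ is equivalent to the conjecture for $\hat X$, and, applied to the already-proven case $i=1$ on $\hat X$, it yields $CH^{\,d-1+s}(X)_s=0$ for every $s<0$, i.e. the vanishing of the sub-boundary diagonal $CH^i(X)_{\,i-d+1}=0$ for all $i\le d-2$. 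Using in addition the additivity of the grading under intersection, $CH^i(X)_s\cdot CH^j(X)_t\subseteq CH^{i+j}(X)_{s+t}$ (immediate from $n^{\ast}$ being a ring homomorphism), one can attempt to propagate these boundary vanishings inward by expressing interior classes as products of lower-codimension ones; this partially succeeds but cannot reach all of $CH^i(X)$, since the Chow groups of an abelian variety are not generated by divisors.

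The remaining and essential task is the interior range $i-d\le s<0$ with $2\le i\le d-2$ (which also contains the still-open one-cycle piece $CH^{d-1}(X)_{-1}$, not reached by the Fourier step above). Here I would pass to the refined Chow--K\"unneth picture for abelian varieties, writing $h(X)=\bigoplus_j\wedge^{j}h^1(X)$, so that each $CH^i(X)_s$ becomes an explicit $\mathrm{Hom}$-group between Tate-twisted summands of this motive and the conjecture reads as the vanishing of the negative-weight such $\mathrm{Hom}$-groups. The natural attack is a positivity/semisimplicity argument: a nonzero class in $CH^i(X)_s$ with $s<0$ should produce a self-correspondence of negative weight that is simultaneously forced to be numerically trivial, and hence (under the relevant standard conjectures) rationally trivial. \emph{This is precisely where the genuine obstacle lies.} For $s<0$ the relevant graded pieces have no accessible cohomological shadow, so there is no unconditional algebraic mechanism available to annihilate them; in fact the interior vanishing is essentially equivalent to a graded piece of the Bloch--Beilinson conjecture asserting that the conjectural filtration has length at most $i$ with no negative-weight part. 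Accordingly, my proposal rigorously settles the conjecture at the boundary ($i\in\{0,1,d\}$) and on the Fourier-transported sub-boundary diagonal, and isolates the interior vanishing as the hard core, which reduces to---and is presently no easier than---Bloch--Beilinson.
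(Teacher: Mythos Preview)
The statement you were asked to prove is labelled \textbf{Conjecture}~\ref{beauconj} in the paper, and the paper offers no proof of it whatsoever: it is quoted as an open problem (Beauville's Conjecture) and is only used conditionally, in the second part of the Corollary immediately following Theorem~\ref{CK-Ab}. So there is no ``paper's own proof'' to compare your proposal against.

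That said, your write-up is an accurate and honest account of the situation. You correctly dispose of the trivial boundary codimensions $i=0,d$, you recover Beauville's own computation for $i=1$, and your Fourier-transform step reproducing the vanishing along the sub-boundary diagonal $CH^i(X)_{i-d+1}=0$ is exactly the kind of argument Beauville used to establish the known cases. Most importantly, you do not overclaim: you explicitly isolate the interior range as the genuine obstruction and note that it is equivalent in strength to a piece of the Bloch--Beilinson picture. That is the correct assessment---Beauville's Conjecture remains open in general, and no argument along the lines you sketch (or any other currently known) closes the gap. Your proposal is therefore not a proof of the conjecture, but it was never going to be; as a diagnosis of what is provable and what is not, it is sound.
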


We further have by Shermenev~\cite{sh}, Denninger-Murre~\cite{dm},
\begin{theorem}\label{CK-Ab}
For $X$ an abelian variety of dimension $d$ over $k=\bar{k}$,
\begin{enumerate}
\item X has a Chow-K\"unneth decomposition
\item There exist Chow-K\"unneth components $\pi_i$ such that $n_{\ast}\circ\pi_i=\pi_i\circ n_{\ast}=n^{i}\pi_i$ for all $n$ and such $\pi_i$'s are unique.
\end{enumerate}
\end{theorem}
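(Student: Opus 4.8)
The plan is to produce the projectors $\pi_i$ as spectral (Lagrange) idempotents attached to the endomorphism $[2]\colon X\to X$, so that every required identity becomes a formal consequence of one non-formal input drawn from Beauville's structure theory. Fix notation: for $n\in\mathbf{Z}\setminus\{0\}$ let $\Gamma_{[n]}\in CH^d(X\times X)$ be the graph of $[n]\colon X\to X$, and let $n_\ast:={}^{t}\Gamma_{[n]}\in CH^d(X\times X)$ be its transpose; this is the self-correspondence of $X$ induced by $[n]$, it induces $[n]^{\ast}$ on $CH^{\bullet}(X)$ and on $H^{\bullet}(X,\mathbf{Q})$ — hence multiplication by $n^{j}$ on $H^{j}(X,\mathbf{Q})$ — and it is the operator denoted $n_\ast$ in the statement. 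Write $\circ$ for composition of correspondences, so $\Delta_X$ is the unit and $n_\ast\circ m_\ast={}^{t}\Gamma_{[nm]}=(nm)_\ast=m_\ast\circ n_\ast$.

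The one non-formal step, which I take from Beauville's work (Theorem~\ref{bdec} and \cite{b}), is the polynomial identity in $(CH^d(X\times X)_{\mathbf{Q}},\circ)$
\[
\prod_{i=0}^{2d}\bigl(2_\ast-2^{i}\Delta_X\bigr)=0,
\]
the product taken with respect to $\circ$; equivalently, the operator $z\mapsto 2_\ast\circ z$ on $CH^d(X\times X)_{\mathbf{Q}}$ — which is $z\mapsto(\mathrm{id}_X\times[2])^{\ast}z$ — is semisimple with eigenvalues among $2^{0},\dots,2^{2d}$. This is exactly the content of Beauville's decomposition of the Chow groups of abelian varieties (that $[n]^{\ast}$ acts as a polynomial in $n$ of bounded degree), applied to $X\times X$ and the endomorphism $\mathrm{id}_X\times[2]$; it can also be obtained from the Fourier–Mukai calculus on $X\times X$ as in Deninger–Murre \cite{dm}, or by Shermenev's direct construction \cite{sh}. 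This is where the real work lies, and the main obstacle: its proof rests on the decomposition of the class of the (small) diagonal under the group law, and on the compatibility of the weight decompositions attached to the different multiplication maps; everything else below is formal and valid over $k$.

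Granting this, define $\pi_i:=\prod_{0\le k\le 2d,\ k\neq i}\dfrac{2_\ast-2^{k}\Delta_X}{2^{i}-2^{k}}$ for $0\le i\le 2d$, a $\circ$-polynomial in $2_\ast$. Since the $2^{i}$ are pairwise distinct, the Lagrange identity $\sum_i\prod_{k\neq i}\frac{x-2^{k}}{2^{i}-2^{k}}\equiv1$ and the displayed relation give $\sum_i\pi_i=\Delta_X$; and for any one–variable polynomial $Q$ one has $Q(2_\ast)\circ\pi_i=Q(2^{i})\pi_i$ once the relation is invoked, whence $\pi_i\circ\pi_j=\delta_{ij}\pi_i$ (a polynomial in $x$ vanishing at all $2^{k}$ is divisible by $\prod_k(x-2^{k})$, so its value at $2_\ast$ is annihilated via the relation) and $2_\ast\circ\pi_i=\pi_i\circ2_\ast=2^{i}\pi_i$. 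For general $n$, $n_\ast$ commutes with $2_\ast$, hence with each $\pi_i$, and Beauville's theory (coincidence of the weight decompositions for the various multiplication maps) identifies $n_\ast$ on the $2^{i}$-eigenspace $V_i:=\pi_i\circ CH^d(X\times X)_{\mathbf{Q}}$ with multiplication by $n^{i}$, giving $n_\ast\circ\pi_i=\pi_i\circ n_\ast=n^{i}\pi_i$ for all $n$. Applying the cycle class map, $n_\ast\circ\pi_i=n^{i}\pi_i$ realizes as $[n]^{\ast}\circ[\pi_i]_\ast=n^{i}[\pi_i]_\ast$ on $H^{\bullet}(X,\mathbf{Q})$; on $H^{j}$ this reads $(n^{j}-n^{i})[\pi_i]_\ast|_{H^{j}}=0$, so (take $n=2$) $[\pi_i]_\ast$ kills $H^{j}(X,\mathbf{Q})$ for $j\neq i$, and then $\sum_i[\pi_i]_\ast=\mathrm{Id}$ forces $[\pi_i]_\ast|_{H^{i}}=\mathrm{Id}$. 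Thus $\{\pi_i\}$ is a Chow–Künneth decomposition, which is assertion (1), and it satisfies the eigenvalue property of (2).

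For duality ${}^{t}\pi_i=\pi_{2d-i}$, I first record the elementary identity $\Gamma_{[n]}\circ{}^{t}\Gamma_{[n]}=\Gamma_{[n]}\circ n_\ast=n^{2d}\Delta_X$ (the composite of the graph and transpose graph of the degree-$n^{2d}$ isogeny $[n]$: the relevant intersection in $X\times X\times X$ is transverse and maps onto $\Delta_X$ with degree $n^{2d}$). Transposing the relations of the previous paragraph for $\pi_{2d-i}$ gives $\Gamma_{[n]}\circ{}^{t}\pi_{2d-i}=({}^{t}\pi_{2d-i})\circ\Gamma_{[n]}=n^{2d-i}\,{}^{t}\pi_{2d-i}$; composing on the right with $n_\ast$ and using $\Gamma_{[n]}\circ n_\ast=n^{2d}\Delta_X$ yields $({}^{t}\pi_{2d-i})\circ n_\ast=n^{i}\,{}^{t}\pi_{2d-i}$, and summing over $i$ (with ${}^{t}\Delta_X=\Delta_X$) gives $n_\ast=\sum_i n^{i}\,{}^{t}\pi_{2d-i}$, hence also $n_\ast\circ{}^{t}\pi_{2d-i}=n^{i}\,{}^{t}\pi_{2d-i}$ by orthogonality of the ${}^{t}\pi_{\bullet}$. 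So $\{{}^{t}\pi_{2d-i}\}_i$ satisfies $\sum_i{}^{t}\pi_{2d-i}=\Delta_X$ and the same eigenvalue relations as $\{\pi_i\}_i$. Uniqueness finishes the argument: if $\{\sigma_i\}$ is any family with $\sum_i\sigma_i=\Delta_X$ and $2_\ast\circ\sigma_i=2^{i}\sigma_i$, then $(2_\ast)^{\circ m}\circ\sigma_i=2^{mi}\sigma_i$ so $Q(2_\ast)\circ\sigma_i=Q(2^{i})\sigma_i$ for every $Q$; taking $Q$ the $j$-th Lagrange basis polynomial gives $\pi_j\circ\sigma_i=\delta_{ij}\sigma_i$, hence $\pi_j=\pi_j\circ\Delta_X=\sum_i\pi_j\circ\sigma_i=\sigma_j$. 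This proves the $\pi_i$ of (2) are unique, and applied to the family $\{{}^{t}\pi_{2d-i}\}$ it gives ${}^{t}\pi_i=\pi_{2d-i}$, completing (2).
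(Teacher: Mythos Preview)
The paper does not prove this theorem; it is stated with attribution to Shermenev~\cite{sh} and Deninger--Murre~\cite{dm} and used as a black box, so there is no in-paper argument to compare against.

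Your outline is essentially the Deninger--Murre argument itself: one establishes the annihilating relation $\prod_{i=0}^{2d}(2_\ast-2^{i}\Delta_X)=0$ in the correspondence ring, then builds the $\pi_i$ by Lagrange interpolation, after which idempotency, orthogonality, and the cohomological identification follow formally. Your uniqueness argument is clean and correct, and your derivation of the duality ${}^{t}\pi_i=\pi_{2d-i}$ from uniqueness together with the degree identity $\Gamma_{[n]}\circ{}^{t}\Gamma_{[n]}=n^{2d}\Delta_X$ is a nice touch. Your reading of the symbol $n_\ast$ in the statement as ${}^{t}\Gamma_{[n]}$ (the correspondence inducing $[n]^{\ast}$) is the right one for the relation $n_\ast\circ\pi_i=n^{i}\pi_i$ to be compatible with $\pi_i$ projecting onto $H^{i}$, and is consistent with equation~\eqref{dec}.

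Two places where the sketch is thinner than it looks. First, the polynomial identity does not follow from Theorem~\ref{bdec} applied to ``$X\times X$ and the endomorphism $\mathrm{id}_X\times[2]$'' as you write: Beauville's eigenspace decomposition is for the full multiplication $[n]_{X\times X}=[n]\times[n]$, not for one-sided endomorphisms, and Chow groups do not satisfy a K\"unneth formula that would let you separate the two factors. The genuine input here is the Fourier--Mukai calculus of \cite{b} or \cite{dm}, which you correctly flag as ``where the real work lies.'' Second, the passage to general $n$ is not purely formal: commutation of $n_\ast$ with $2_\ast$ only shows that $n_\ast$ preserves each $V_i$, not that it acts there by the scalar $n^{i}$; for that one again needs the same Beauville/Deninger--Murre input (that the eigenspace decompositions for the various $[n]$ coincide), so this step is not a consequence of the Lagrange construction alone.
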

Combining this with Beauville's result, we get that 
\begin{equation}\label{dec}
CH^i(X)_s=(\pi_{2i-s})_{\ast}CH^i(X).
\end{equation}
\begin{corollary}
\begin{enumerate}
\item 
$\pi_i$ operates as 0 on $CH^j(X)$ for $i>j+d$ and $i<j.$
\item 
For $2j< i<j+d$, $\pi_i$ operates as 0 on $CH^j(X)$ iff the Beauville's conjecture~\ref{beauconj} holds true for $CH^j(X)$.
\end{enumerate}

\end{corollary}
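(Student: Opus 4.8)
The plan is to read everything off the explicit description, recorded in Eq.~(\ref{dec}), of how the Chow-K\"unneth projectors act on the Beauville pieces, together with the range of summation in Theorem~\ref{bdec}. Rewriting $CH^j(X)_s=(\pi_{2j-s})_{\ast}CH^j(X)$ with $s=2j-i$, one gets that for fixed codimension $j$,
\[
(\pi_i)_{\ast}CH^j(X)=CH^j(X)_{2j-i};
\]
and since the $\pi_i$ are mutually orthogonal idempotents summing to $\Delta_X$, the subspaces $CH^j(X)_{2j-i}$ for varying $i$ recover the whole Beauville decomposition, so this is an honest equality. Hence $(\pi_i)_{\ast}$ vanishes on $CH^j(X)$ if and only if the single Beauville component $CH^j(X)_{2j-i}$ is zero, and the corollary becomes a purely combinatorial question about the index $s=2j-i$.

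For part (1): Theorem~\ref{bdec} gives $CH^j(X)=\bigoplus_{s=j-d}^{j}CH^j(X)_s$, so $CH^j(X)_s=0$ whenever $s>j$ or $s<j-d$. If $i<j$ then $2j-i>j$, and if $i>j+d$ then $2j-i<j-d$; in either case $CH^j(X)_{2j-i}=0$, so $\pi_i$ acts as $0$ on $CH^j(X)$. For part (2): when $2j<i<j+d$ one has $j-d<2j-i<0$, so $CH^j(X)_{2j-i}$ lies in the negative range of degrees, which is precisely the part that Conjecture~\ref{beauconj} predicts to vanish. Thus if Conjecture~\ref{beauconj} holds for $CH^j(X)$ then $\pi_i$ acts as $0$; conversely, as $i$ ranges over $2j<i<j+d$ the index $2j-i$ sweeps out the negative degrees occurring in the Beauville decomposition of $CH^j(X)$, so the vanishing of all these actions is equivalent to Conjecture~\ref{beauconj} for $CH^j(X)$.

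Since this is a corollary there is no genuine obstacle: the content is the bookkeeping with $s=2j-i$ and the summation range $j-d\le s\le j$. The two points to watch are keeping the normalizations consistent --- the eigenvalue convention $n_{\ast}\alpha=n^{2d-2j+s}\alpha$ for the Beauville pieces versus $n_{\ast}\circ\pi_i=n^i\pi_i$ for the projectors, which is exactly what Eq.~(\ref{dec}) packages --- and, in part (2), being clear about whether one reads the assertion one projector at a time (getting vanishing of the corresponding single piece $CH^j(X)_{2j-i}$) or for the entire window $2j<i<j+d$ simultaneously.
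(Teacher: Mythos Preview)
Your argument is correct and follows essentially the same route as the paper's own proof: both hinge on the substitution $s=2j-i$ coming from Eq.~(\ref{dec}) together with the range $j-d\le s\le j$ in Theorem~\ref{bdec}, and then read off the two parts as index bookkeeping. Your treatment of part~(2) is in fact slightly more explicit than the paper's about the ambiguity between a single $i$ and the whole window, which is helpful.
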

\begin{proof}
\begin{enumerate}
\item $\pi_i$ acts different from 0 on $CH^j(X)$ only if, there is $s$ such that $i=2j-s$. But $i>d+j$ iff $2j-s>d+j$ iff $s<j-d$. But by Beauville's theorem $s\geq j-d$. Similarly, $i<j$ iff $2j-s<j$ iff $s> j$, which again does not occur in Beauville's decomposition. 
\item Similar argument as before: we have $s\geq0$ iff $2j-s\leq2j$. Thus by the assumption on the range of $i$, $i\neq 2j-s$, hence $\pi_i$ operates as 0 on $CH^j(X)$ iff $CH^j(X)_s=0$ for $s<0$. 
\end{enumerate}
\end{proof}
\begin{remark}\label{CKoperate}
It follows from combining the above results that, $\pi_i$  operates as 0 on $CH_0(X)$ for $i< d$. Further $\pi_i$ operates as 0 on $G^d$ for $i\neq d$ and $\pi_d$ operates as the identity on $G^d.$
\end{remark}
\end{section}

\begin{section}{Refined Chow-K\"unneth Decomposition} \label{refinedCK}
In this section, we will consider $X$ to be a smooth projective variety over an algebraically closed field $k$ and will concentrate on $i=d$, $H^d(X)$ the middle singular cohomology of $X$. Given a Chow-K\"unneth decomposition of $X$, under some further assumptions ($\ast$) and ($\ast\ast$) defined below, we follow C. Vial's recipe in~\cite{v} to produce a refined decomposition of the cycle $\pi_d$ into mutually orthogonal, idempotents given by 
\begin{center}
$\pi_d=\displaystyle\sum_{j=0}^{\lfloor d/2\rfloor}\Pi_{d,j}$
\end{center}
such that $(\Pi_{d,j})_{\ast}H^{\ast}(X,\mathbf{Q})=Gr^j_{\widetilde{N}^{\bullet}}H^d(X).$

Let us setup some assumptions. 

Let $L$ be an ample line bundle on $X$ which defines an embedding $X\subseteq \mathbf{P}_k^N$ and hence for any integer $i\leq d$, a map $L^{d-i}: H^i(X)\to H^{2d-i}(X)$ given by intersecting $d-i$ times with the cohomology class of a hyperplane section $H$ of $X$ . This map is induced by a correspondence on $X\times X$ given by $X\times H$ and is an isomorphism of Hodge structures (Hard Lefschetz theorem). We say $X$ satisfies the property $B_i$ if the isomorphism $L^{i-d}:=(L^{d-i})^{-1}: H^{2d-i}(X)\to H^{i}(X)$ is induced by an algebraic correspondence. If $X$ satisfies property $B_i$ for every $i$, then we say $X$ satisfies property \textbf{B}. 

The property \textbf{B} is satisfied by curves, surfaces (Grothendieck, see~\cite{KL68}), abelian varieties (Lieberman~\cite{lib}, Kleiman~\cite{KL68}), complete intersections and any products, hypersurface intersections or their finite quotient. One of Grothendieck's conjectures (also known as the \textbf{Lefschetz standard conjecture}) says that all smooth projective varieties satisfy the property \textbf{B}.

Assumption ($\ast$):
\begin{enumerate}
\item $X$ satisfies \textbf{B} and
\item for 
all $j\geq 1$,  
       either there exists $W_{d,j}$ as in the Equation~(\ref{nivsimple}) satisfying $B_l$ for all $l\leq d-2j-2$, or
     $N^{j+1}H^d(X)=\widetilde{N}^{j+1}H^d(X).$
         \end{enumerate}

As the property $B_1$ holds for all smooth projective varieties, the property ($\ast$) holds for all smooth projective varieties of dimension at most 5 that satisfy property \textbf{B}, as the $W_{d,j}$ need to only satisfy $B_l$ for $l=1$. In particular property ($\ast$) holds for curves, surfaces, abelian varieties of dimension $\leq 5$, complete intersections of dimension $\leq 5$, uniruled 3-folds, rationally connected 5-folds with $H^3(X, \Omega_X)=0$. We will list further examples of abelian varieties for which the assumption ($\ast$) holds true in the Section~\ref{list}.

Further, we consider varieties $X$ satisfying the 
\begin{center}
Assumption ($\ast\ast$):
$Ker (cl: CH^d(X\times X)\to H^{2d}(X\times X)) $ is a nilpotent ideal.
\end{center}
Here $CH^d(X\times X) $ is considered as a ring with respect to the multiplication given by composition of correspondences, denoted by $\circ$. For this ring structure, the above kernal subgroup associated to the cycle class map $cl$ is an ideal, as stated above the assumption ($\ast\ast$) states that this ideal is nilpotent. For example, varieties dominated by a product of curves satisfy ($\ast\ast$). 
More generally, any variety satisfying Kimura's finite dimensionality as in~\cite{k} satisfies ($\ast\ast$). Conjecturally, this holds for arbitrary smooth projective varieties.

In the paper, we will be using that abelian varieties satisfy the assumption ($\ast\ast$), as shown by Kimura in~\cite{k}, that is why we will not explicitly mention this particular condition in our applications. 
Let us recall a lifting lemma: 
\begin{lemma}{~\cite[Lemma 5.4]{j}}
\label{lem}
Let $X$ be a smooth projective variety satisfying the property ($\ast\ast$) as stated above. Let $c_1,\dots, c_n\in CH^d(X\times X)$ be correspondences such that $cl(c_i)\in H^{\ast}(X\times X)$ define mutually orthogonal idempotents adding to the identity. Then there exists mutually orthogonal idempotents $p_1,\dots, p_n\in CH^d(X\times X)$ adding to the identity such that $cl(p_i)=cl(c_i)$ for all i. Moreover any two such choices $\{p_1,\dots,p_n\}$ and $\{p_1',\dots,p_n'\}$ are conjugate by an element lying above the identity i.e. there exists a nilpotent correspondence $\eta\in CH^d(X\times X)$ such that $p'_i=(1+\eta)\circ p_i\circ(1+\eta)^{-1}$ for all i.
\end{lemma}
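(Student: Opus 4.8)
The plan is to strip away all geometry and prove the purely ring-theoretic fact that idempotents, and orthogonal families of idempotents, lift uniquely up to conjugacy along a ring surjection with nilpotent kernel. Write $R:=CH^d(X\times X)$ for the ring under composition $\circ$, with unit $\Delta_X$, and $I:=\ker(cl)\subseteq R$, which by assumption $(\ast\ast)$ is a two-sided ideal with $I^N=0$ for some $N$. Since $cl$ is compatible with $\circ$ we have $cl(R)\cong R/I$, and each $cl(c_i)$ lies in $cl(R)$; replacing $c_i$ by its class we are handed mutually orthogonal idempotents $\bar e_1,\dots,\bar e_n\in R/I$ summing to $1$, and we must produce mutually orthogonal idempotents $p_i\in R$ summing to $1$ with $cl(p_i)=\bar e_i$, unique up to conjugacy by $1+\eta$ with $\eta$ nilpotent.

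First I would lift a single idempotent $\bar e$. Choose any $a\in R$ above $\bar e$; then $a\circ a-a\in I$ commutes with $a$, so the whole computation takes place inside the commutative subring $\mathbf{Z}[a]\subseteq R$. Running the classical Newton-type iteration $a\mapsto 3a^{\circ2}-2a^{\circ3}$ — which fixes the residue class modulo $I$ and squares the error (if $a\circ a-a\in I^m$ then the new element $a'$ satisfies $a'\circ a'-a'\in I^{2m}$) — for $\lceil\log_2 N\rceil$ steps yields an honest idempotent $p\in\mathbf{Z}[a]$ with $cl(p)=\bar e$.

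Next I would lift the whole family by induction on $n$ via corner rings. Lift $\bar e_1$ to an idempotent $p_1$ by the previous step and pass to $R_1:=(1-p_1)\circ R\circ(1-p_1)$, a unital ring with unit $1-p_1$; its kernel under $cl$ is $(1-p_1)\circ I\circ(1-p_1)$, still nilpotent, and its quotient receives $\bar e_2,\dots,\bar e_n$ (these are orthogonal to $\bar e_1$, hence lie in the corner, and sum to $\overline{1-p_1}$). By the inductive hypothesis there are mutually orthogonal idempotents $p_2,\dots,p_n\in R_1$ lifting them and summing to $1-p_1$; as elements of $R_1$ they are automatically orthogonal to $p_1$, and $p_1+\dots+p_n=1$, which is the required family.

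For the uniqueness clause, given two liftings $\{p_i\}$ and $\{p_i'\}$ with $cl(p_i)=cl(p_i')=\bar e_i$, set $u:=\sum_{i=1}^n p_i'\circ p_i$. From $cl(p_i')\circ cl(p_j)=\delta_{ij}\bar e_i$ one gets $cl(u)=\sum_i\bar e_i=1$, so $u=1+\eta$ with $\eta\in I$ nilpotent, whence $u$ is invertible with inverse the finite sum $\sum_{k\ge0}(-\eta)^{\circ k}$; orthogonality gives $u\circ p_i=p_i'\circ p_i=p_i'\circ u$, so $p_i'=u\circ p_i\circ u^{-1}=(1+\eta)\circ p_i\circ(1+\eta)^{-1}$. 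The only place demanding care is the bookkeeping in the inductive step — checking that the corner ring genuinely inherits a nilpotent kernel and that the lifted idempotents stay orthogonal and sum to the unit — but this is entirely formal, so granting $(\ast\ast)$ the lemma presents no serious obstacle.
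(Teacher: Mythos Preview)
Your argument is correct and follows the standard ring-theoretic route to this lifting lemma. Note, however, that the paper does not actually prove this statement: it is quoted verbatim as \cite[Lemma~5.4]{j} and used as a black box, so there is no in-paper proof to compare against. Your approach --- Newton iteration $a\mapsto 3a^{\circ 2}-2a^{\circ 3}$ to lift a single idempotent through a nilpotent ideal, induction via corner rings $(1-p_1)\circ R\circ(1-p_1)$ to lift the full orthogonal family, and the intertwiner $u=\sum_i p_i'\circ p_i$ for conjugacy --- is exactly the classical argument (and is in essence how Jannsen proceeds as well). The computations you sketch all go through; in particular, since the Chow groups here carry $\mathbf{Q}$-coefficients there is no torsion obstruction, and the verification that $(a')^{\circ 2}-a'\in I^{2m}$ whenever $a^{\circ 2}-a\in I^m$ is the routine identity $(3a^2-2a^3)^2-(3a^2-2a^3)=(a^2-a)^2\bigl((1-2a)^2-4\bigr)$ in $\mathbf{Z}[a]$. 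Nothing is missing.
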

We next recall Vial's theorem, 
\begin{theorem}{~\cite[Theorem 1.2]{v}} \label{vial}
Let $X$ be a smooth projective variety for which the properties ($\ast$) and ($\ast\ast$) hold as defined above. Then there exists a collection of codimension $d$ cycles $\{\Pi_{d,j}\}_{j=0}^{\lfloor d/2\rfloor}$ on $X\times X$ which are mutually orthogonal, idempotent, 
such that $\pi_d=\sum_j \Pi_{d,j}$ in $CH^d(X\times X)$ and ${\Pi_{d,j}}_{\ast}H^{\ast}(X) =Gr^j_{\widetilde{N}^{\bullet}}H^d(X)$. For any such choice of idempotents we have:
\begin{enumerate}
\item $\Pi_{d,j}$ acts as 0 on $CH_l(X)$ if either $l<j$ or $l>d-j$.
\item $\Pi_{d,j}$ acts as 0 on $CH_l(X)$ if $l=d-j$ and $d<2l$.
\item $\Pi_{d,j}$ acts as 0 on $CH_l(X)$ if $l+1=d-j$ and $d\leq 2l$.
\end{enumerate}
 \end{theorem}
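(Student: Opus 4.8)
The statement splits into an existence assertion for the family $\{\Pi_{d,j}\}$ and the three vanishing properties, and I would arrange the proof so that the vanishing is read off from a Chow-level factorization through the auxiliary varieties $W_{d,j}$ of Equation~(\ref{nivsimple}). Throughout write $W:=W_{d,j}$, $n:=d-2j=\dim W$, and $\Gamma:=\Gamma_{d,j}\in CH^{d-j}(W\times X)$, so that $\widetilde N^jH^d(X)=\Gamma_\ast H_{d-2j}(W)$ with $H_{d-2j}(W)=H^n(W)$ the middle cohomology of $W$; consequently $Gr^j_{\widetilde N}H^d(X)$ is identified with the niveau-$0$ graded piece $H^n(W)/\widetilde N^1H^n(W)$. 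Working modulo homological equivalence, I first build an idempotent self-correspondence $q_j$ of $W$ whose realization is the projector of $H^\ast(W)$ onto $H^n(W)/\widetilde N^1H^n(W)$, a sub-projector of the middle K\"unneth projector of $W$. Its algebraicity modulo homological equivalence is exactly the content of assumption~($\ast$): property~\textbf{B} for $X$ makes the K\"unneth projectors algebraic, while the clause demanding $B_l$ for $W$ in the range $l\le n-2$ (or the coincidence $N^{j+1}H^d(X)=\widetilde N^{j+1}H^d(X)$) lets one split off $\widetilde N^1H^n(W)$ by induction on $\dim W$. Fixing polarizations on $H^n(W)$ and $H^d(X)$, the map $\Gamma_\ast$ carries $H^n(W)/\widetilde N^1H^n(W)$ isomorphically onto a polarized complement of $\widetilde N^{j+1}$ in $\widetilde N^j$, and its adjoint (realized by ${}^t\Gamma$ composed with the algebraic Lefschetz inverses granted by \textbf{B}) furnishes a retraction. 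The normalized composite $\bar\Pi_{d,j}:=\Gamma\circ q_j\circ{}^t\Gamma$ is then a cohomological idempotent with $(\bar\Pi_{d,j})_\ast H^\ast(X)=Gr^j_{\widetilde N}H^d(X)$; these are mutually orthogonal and sum to the class of $\pi_d$.

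\textbf{Lifting and reduction to a factored model.} Applying Lemma~\ref{lem} to the cohomologically orthogonal system $\{\pi_i\ (i\ne d),\ \bar\Pi_{d,0},\dots,\bar\Pi_{d,\lfloor d/2\rfloor}\}$ --- legitimate because $X$ satisfies ($\ast\ast$) --- produces a genuine orthogonal family $\{\Pi_{d,j}\}\subset CH^d(X\times X)$ summing to $\pi_d$, establishing existence. For the vanishing I would introduce a second, non-orthogonal model that factors through $W$ at the Chow level. Note that $\Phi_j(Q):=\Gamma\circ Q\circ{}^t\Gamma$ is a ring homomorphism from $CH^n(W\times W)$ with the twisted product $a\ast b:=a\circ({}^t\Gamma\circ\Gamma)\circ b$ into $(CH^d(X\times X),\circ)$. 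Choosing any Chow lift $Q^{(0)}$ of $[q_j]$, the element $x:=\Phi_j(Q^{(0)})$ satisfies $x^2-x\in\ker(cl)$, which is nilpotent by ($\ast\ast$); since the idempotent-lifting polynomial has no constant term, the resulting idempotent equals $\Phi_j(Q_\infty)$ for a $\ast$-polynomial $Q_\infty$ in $Q^{(0)}$. Hence there is an idempotent $e_j:=\Gamma\circ Q_j\circ{}^t\Gamma$ with $Q_j\in CH^n(W\times W)$ and $[e_j]=[\bar\Pi_{d,j}]$. Now $\Pi_{d,j}$ and $e_j$ are two idempotents with the same class modulo the nilpotent ideal $\ker(cl)$, hence conjugate: $\Pi_{d,j}=(1+\eta_j)\,e_j\,(1+\eta_j)^{-1}$ with $\eta_j$ nilpotent. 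Since conjugation by a self-correspondence preserves the locus of $CH_l(X)$ annihilated by a given correspondence, it suffices to prove (1)--(3) for $e_j$.

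\textbf{The three vanishings for $e_j$.} Because $(e_j)_\ast=\Gamma_\ast\circ(Q_j)_\ast\circ({}^t\Gamma)_\ast$ factors as $CH_l(X)\xrightarrow{({}^t\Gamma)_\ast}CH_{l-j}(W)\xrightarrow{(Q_j)_\ast}CH_{l-j}(W)\xrightarrow{\Gamma_\ast}CH_l(X)$, property (1) is immediate: $CH_{l-j}(W)=0$ whenever $l-j<0$ or $l-j>\dim W=d-2j$, i.e. for $l<j$ or $l>d-j$. For (2), $l=d-j$ gives the middle group $CH_n(W)=CH^0(W)=\mathbf{Q}\,[W]$; the action of $Q_j$ there agrees with the scalar by which $[Q_j]_\ast$ acts on $H^0(W)$, which is $0$ since $Q_j$ is supported in $H^n(W)$ with $n>0$ (the hypothesis $d<2l$). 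For (3), $l=d-j-1$ gives $CH_{n-1}(W)=CH^1(W)=Pic(W)_{\mathbf{Q}}$ with $n\ge 2$ (the hypothesis $d\le 2l$); on the summand $Pic^0(W)_{\mathbf{Q}}$ the action of $Q_j$ is induced by $[Q_j]_\ast$ on $H^1(W)$, which vanishes since $n\ge 2>1$, while on the complementary algebraic summand, which injects into $H^2(W)$ via the cycle class, $Q_j$ acts through its cohomological action on $H^2(W)$ --- zero when $n>2$ for degree reasons, and zero when $n=2$ because $Q_j$ projects onto the transcendental (niveau-$0$) part of $H^2(W)$, orthogonal to the algebraic classes.

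\textbf{Main obstacle.} I expect the crux to be the cohomological construction of the first paragraph: producing, modulo homological equivalence, the algebraic middle projectors $q_j$ onto the niveau-$0$ graded pieces of $H^n(W_{d,j})$. This is precisely what assumption~($\ast$) is engineered to supply, the delicate point being the inductive control of $\widetilde N^1H^n(W)$ --- equivalently of niveau versus coniveau in middle degree --- via the Lefschetz-type hypotheses on the lower-dimensional $W_{d,j}$. Once $q_j$ is in hand, the ring-homomorphism argument and the elementary dimension- and $Pic$-theoretic computations are comparatively formal.
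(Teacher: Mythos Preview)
The paper does not prove this statement: Theorem~\ref{vial} is quoted from \cite[Theorem~1.2]{v} and used as a black box, with no argument supplied (the text passes directly from the statement to Section~\ref{list}). There is therefore nothing in the present paper to compare your proposal against.

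That said, your outline is a reasonable reconstruction of the shape of Vial's argument, and the factorization strategy for the vanishing statements (1)--(3) --- pushing and pulling through $CH_{l-j}(W_{d,j})$ and then invoking that $CH^0$ and $CH^1$ of $W$ are controlled by $H^0,H^1,H^2$ --- is essentially how they are established in~\cite{v}. The ring-homomorphism trick for producing a Chow-level idempotent that genuinely factors as $\Gamma\circ Q_j\circ{}^t\Gamma$, together with the conjugacy clause of Lemma~\ref{lem}, is a clean way to reduce to the factored model.

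One point deserves more care. You assert that $\Gamma_\ast$ identifies $H^n(W)/\widetilde N^1H^n(W)$ with $Gr^j_{\widetilde N}H^d(X)$, but only one inclusion is formal: composing correspondences shows $\Gamma_\ast(\widetilde N^1H^n(W))\subseteq\widetilde N^{j+1}H^d(X)$. The reverse containment --- that the preimage of $\widetilde N^{j+1}H^d(X)$ under $\Gamma_\ast$ lies in $\widetilde N^1H^n(W)$ on the relevant summand --- is not automatic, and is precisely where the two alternatives in assumption~($\ast$) do their work. You correctly flag this as the main obstacle, but the sentence ``$\Gamma_\ast$ carries $H^n(W)/\widetilde N^1H^n(W)$ isomorphically onto a polarized complement'' already presupposes it. In~\cite{v} this is handled by an induction on $j$ starting from the deepest niveau stratum, rather than by a one-shot identification of the graded pieces.
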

\subsection{A List of Complex Abelian Varieties and the Assumption ($\ast$)}
 \label{list}
 We will consider the following list of complex abelian varieties to which we can apply the Theorem~\ref{vial} i.e. for which the assumption ($\ast$) holds true (see Remark~\ref{ast}). We recall again that the property \textbf{B} holds for all abelian varieties~\cite{lib}.

 \paragraph{A List:}
 \begin{enumerate}
 \item An abelian variety $X$ of dimension $\leq 5$.
 \item An abelian variety $X$ for which the Hodge group (denoted by $Hg(X)$) is equal to the symplectic group on the vector space $H_1(X,\mathbf{Q})$ with polarization $\beta$, denoted by $Sp(H_1(X,\mathbf{Q}), \beta)$. 
 \\
 Next 
 we list abelian varieties $X$, for which the Generalized Hodge Conjecture is known for all powers of $X$ (see~\cite{gord} and the references therein).\\
 For a complex abelian variety $X$, let $D(X)$= the endomorphism algebra $End(X)\otimes_{\mathbf{Z}}\mathbf{Q}$. We say 
 \begin{enumerate}
 \item $X$ is of type I if $D(X)$ is a totally real field and
 \item $X$ is of type II if $D(X)$ is a totally indefinite quaternion algebra over a totally real number field.
 \end{enumerate}
  \item An abelian variety $X$ such that the Hodge ring of $X^n$ is generated by divisors on $X^n$ for all $n\geq1$ (known as stably non-degenerate~\cite[Theorem 2.7]{haz84}), and all of whose simple components are of type I or type II.   
\item An abelian variety $X$ such that dim $X=$ dim $Hg(X)$ (called non-degenerate as in Ribet~\cite[Corollary 3.6, p. 87]{ribet}), of CM type with CM field $E=D(X)$ so that, $E$ is an imaginary quadratic field over a totally real field $F$ of degree $d$ over $\mathbf{Q}$, such that the degree $[\bar{E}:\bar{F}]=2^d$, where bars denote the Galois closure of respective fields. (~\cite[Example 1]{abdul05}).
 \item  A product of complex elliptic curves.
 \end{enumerate}

\begin{theorem}\label{234}
Let $X$ be an abelian variety in the cases 2, 3, 4, 5 defined above. Then for all $j$ and $i$,
\begin{center}
$\widetilde{N}^{j}H^i(X)={N}^{j}H^i(X)$. 
\end{center}
\end{theorem}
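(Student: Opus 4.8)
The inclusion $\widetilde{N}^{j}H^i(X)\subseteq N^jH^i(X)$ always holds (Eq.~(\ref{inclusion})), so the content of the statement is the reverse inclusion $N^jH^i(X)\subseteq\widetilde{N}^jH^i(X)$ for every $i$ and $j$. Using the hard Lefschetz isomorphism together with property~\textbf{B} for abelian varieties (Lieberman~\cite{lib}, Kleiman~\cite{KL68}), which makes that isomorphism and its inverse algebraic and compatible with both filtrations, one first reduces to the range $i\le d$. The plan then exploits the single feature common to the cases $2,3,4,5$: the Generalized Hodge Conjecture holds for all powers $X^n$ (the content of \cite{gord} and the references therein), and in each of these cases the Hodge classes on the powers of $X$ have a transparent abelian origin.

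The first step is to identify $V:=N^jH^i(X)$ Hodge-theoretically. Since $V$ is a sub-$\mathbf{Q}$-Hodge structure of $H^i(X)$ contained in $F^jH^i(X,\mathbf{C})\cap H^i(X)$, the validity of GHC($X,j,i$) makes it the maximal such sub-Hodge structure; in particular its Hodge numbers are concentrated in bidegrees $(p,i-p)$ with $j\le p\le i-j$, so the Tate twist $V(j)$ is an effective Hodge structure of weight $i-2j$. The second step is to observe that, because $X$ is an abelian variety and GHC holds for its powers, the filtration step $V$ is witnessed by geometry of abelian type: arguing by downward induction on $j$ (so that it suffices to handle the ``top'' stratum), there are a smooth projective $B$ which is a disjoint union of (translates of) abelian varieties built from $X$, of pure dimension $d-j$, and a morphism $g\colon B\to X$, with $V=\mathrm{Im}\bigl(g_\ast\colon H^{i-2j}(B)\to H^i(X)\bigr)$. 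The point of the hypotheses in the cases $2,3,4,5$ is exactly that the coniveau witnesses, a priori arbitrary subvarieties of codimension $\ge j$, may here be chosen of abelian type — via the Lefschetz decomposition into primitive $Sp$-pieces in case~2, the generation of the Hodge ring by divisors in case~3, the CM action in case~4, and the K\"unneth decomposition of a product of elliptic curves in case~5.

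The third step, which is the heart of the matter, is to descend from the witness $B$ of dimension $d-j$ to a witness $W$ of dimension exactly $i-2j$, as Definition~\ref{nev} demands (so that $H^{i-2j}(W)$ becomes the middle cohomology of $W$). For this I would take $W$ to be a model of the effective weight-$(i-2j)$ Hodge structure $V(j)$ of abelian type — an abelian variety of dimension $i-2j$, or a hypersurface intersection inside such — and construct the required algebraic correspondence $\Gamma\in CH^{i-j}(W\times X)$ with $\Gamma_\ast H^{i-2j}(W)\supseteq V$ as the composite of: an algebraic projector of $W\times W$ onto the relevant sub-Hodge structure of $H^{i-2j}(W)$; the algebraic isomorphism matching $V(j)$ with its copy inside the cohomology of $W$; and a correspondence from $W$ to $X$ of the form ``graph of a map of abelian type, crossed with an auxiliary abelian variety of dimension $d-i+j$ to absorb the Tate twist $\mathbf{Q}(-j)$''. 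Each ingredient is algebraic thanks to property~\textbf{B} for abelian varieties and for their products with projective spaces and hypersurface intersections therein, the algebraicity of the K\"unneth components $\pi_\ast$ and of the multiplication-by-$n$ projectors of abelian varieties (Theorem~\ref{CK-Ab}), and the Hodge Conjecture for the relevant products of powers of $X$ with such auxiliary abelian varieties, which holds for $X$ in the classes $2,3,4,5$. Given such $\Gamma$, Definition~\ref{nev} yields $N^jH^i(X)=V\subseteq\widetilde{N}^jH^i(X)$, and together with Eq.~(\ref{inclusion}) and the reduction to $i\le d$ this proves the theorem.

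The main obstacle is this third step: the coniveau of $H^i(X)$ is naturally supported at dimension $d-j$, whereas the niveau filtration insists on a witness at dimension $i-2j$, strictly smaller when $i<d+j$. Closing this gap is precisely the phenomenon analysed in Vial~\cite[Proposition~1.1]{v}, where for a general variety it is controlled by the Lefschetz standard conjecture for auxiliary varieties of dimension $<\dim X$; the reason the theorem is confined to the classes $2,3,4,5$ is that there, thanks to GHC for the powers of $X$, the auxiliary varieties that actually occur are of abelian type — abelian varieties, their products with projective spaces, and hypersurface intersections therein — for which the Lefschetz standard conjecture is known (Lieberman, Kleiman, together with the stability of property~\textbf{B} under products and hypersurface sections), so the argument of \cite{v} can be carried through. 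For an arbitrary abelian variety this step is open, which is why the statement is restricted to these classes.
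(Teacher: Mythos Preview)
The paper does not argue the theorem at all but simply cites, case by case: Case~2 via Friedlander~\cite[Proposition~4.4]{fr} and Friedlander--Mazur~\cite[Theorem~7.3]{frma}; Case~3 via Abdulali~\cite[Theorem~8.1]{abdul00}; Case~4 via~\cite[Example~1]{abdul05} combined with~\cite[Proposition~5.1 and Theorem~5.2]{abdul00}; Case~5 via~\cite[p.~195]{lawsonJr}. Your unified scheme is more ambitious, and its overall shape --- realize $V=N^jH^i(X)$ as the image of an algebraic correspondence from a variety $W$ of dimension exactly $i-2j$ --- is indeed the mechanism underlying Abdulali's results for Cases~3 and~4.

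The gap is in your second step. You assert that because GHC holds for the powers of $X$, the coniveau piece $V$ is ``witnessed by geometry of abelian type'', i.e.\ carried by a morphism from some $B$ built out of abelian varieties. This does not follow. GHC($X,j,i$) identifies \emph{which} sub-Hodge structures lie in $N^jH^i(X)$; it says nothing about the nature of the subvarieties supporting them, and a priori the $Z$'s in Definition~\ref{con} are arbitrary. What Abdulali actually isolates and uses is the stronger structural property of \emph{domination}: every sub-Hodge structure of the cohomology of a power of $X$ appears, after a Tate twist, in the cohomology of a member of a fixed class of abelian varieties for which the usual Hodge conjecture is known. From domination together with the Hodge conjecture for that class one deduces \emph{both} GHC for the powers of $X$ \emph{and} $\widetilde{N}^\bullet=N^\bullet$; you have reversed the logic, trying to extract the second conclusion from the first. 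Your per-case hints (``primitive $Sp$-pieces'', ``Hodge ring generated by divisors'', ``CM action'', ``K\"unneth for elliptic curves'') point at the right phenomena but do not, as stated, produce abelian coniveau witnesses or the correspondence $\Gamma$ of your third step --- filling that in is exactly the content of the cited papers. Case~2 in particular is handled in the literature not through GHC for powers but through $Sp$-representation theory and the Friedlander--Mazur machinery, so it does not fit your template without separate work.
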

\begin{proof}
\begin{itemize}
\item For $X$ in Case 2:
The proof follows from~\cite[Proposition 4.4]{fr} and~\cite[Theorem 7.3]{frma} after one observes that in loc. cit. 
$\widetilde{N}^{p}H_r(X)=C_pH_r(X)$ and ${N}^{p}H_r(X)=G_{p}H_r(X)$.  
\item For $X$ in Case 3: For the proof, we refer to~\cite[Theorem 8.1]{abdul00}.
\item For $X$ is in Case 4: The proof follows from~\cite[Example 1]{abdul05}, where it is proved that such $X$ is dominated by powers of $X$ and since $X$ is non-degenerate (Ribet~\cite[Corollary 3.6, p. 87]{ribet}), GHC holds for all powers of $X$. Now the conclusion follows from~\cite[Proposition 5.1 and Theorem 5.2]{abdul00}. 
\item For $X$ is in Case 5: The proof follows from~\cite[p. 195]{lawsonJr}.
\end{itemize}
\end{proof}
\begin{remark}
\begin{enumerate}
\item As mentioned earlier, $X$ in case 1 satisfies the assumption ($\ast$).
\item If $X$ belongs to the cases 2, 3, 4, 5, then $X$ satisfies the assumption ($\ast$), as it follows from the above Theorem~\ref{234} for $i=d=$ dim $X$. 
\end{enumerate}
\label{ast}
\end{remark}


Applying the Theorem~\ref{vial} to the abelian varieties satisfying the assumption ($\ast$), in particular to those in the List above, we get:
\begin{corollary}\label{vc}
 Let $X$ be a complex abelian variety satisfying the assumption ($\ast$). Then there exists a collection of cycles $\{\Pi_{d,j}\}_{j=0}^{\lfloor d/2\rfloor}$ on $X\times X$ of codimension $d$ which are mutually orthogonal, idempotent, such that $\displaystyle\pi_d=\sum_{j=0}^{\lfloor d/2\rfloor}\Pi_{d,j}$ in $CH^d(X\times X)$ and $(\Pi_{d,j})_{\ast}H^{*}(X)=Gr^j_{\widetilde{N}^{\bullet}}H^d(X).$ Moreover, the idempotents satisfy:
\begin{enumerate}
\item $\Pi_{d,j}$ acts as 0 on $CH_l(X)$ if either $l<j$ or $l>d-j$.
\item $\Pi_{d,j}$ acts as 0 on $CH_l(X)$ if $l=d-j$ and $d<2l$.
\item $\Pi_{d,j}$ acts as 0 on $CH_l(X)$ if $l+1=d-j$ and $d\leq 2l$.
\end{enumerate}
  \end{corollary}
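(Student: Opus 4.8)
The plan is to obtain this statement as a direct application of Theorem~\ref{vial}. That theorem requires the conjunction of the two assumptions ($\ast$) and ($\ast\ast$), whereas the corollary hypothesizes only ($\ast$); so the one point to check is that every complex abelian variety automatically satisfies ($\ast\ast$). First I would recall that, by Kimura~\cite{k}, an abelian variety has a finite-dimensional Chow motive, and — as already noted in the paragraph preceding Lemma~\ref{lem} — any variety with a finite-dimensional motive satisfies ($\ast\ast$), i.e. $\ker\!\big(cl\colon CH^d(X\times X)\to H^{2d}(X\times X)\big)$ is a nilpotent ideal for the composition product. Hence $X$ satisfies both ($\ast$), by hypothesis, and ($\ast\ast$), for free.

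Then I would simply invoke Theorem~\ref{vial}: it produces the collection $\{\Pi_{d,j}\}_{j=0}^{\lfloor d/2\rfloor}$ of mutually orthogonal idempotent codimension-$d$ cycles on $X\times X$ with $\pi_d=\sum_{j}\Pi_{d,j}$ in $CH^d(X\times X)$, with $(\Pi_{d,j})_\ast H^\ast(X)=Gr^j_{\widetilde{N}^{\bullet}}H^d(X)$, and satisfying the vanishing properties (1)--(3) exactly as stated; transcribing this is the whole proof. I expect no real obstacle here: the substance of the corollary is only that the hypotheses of Vial's theorem are met, and the single nontrivial input — finite-dimensionality of abelian varieties — is imported wholesale from Kimura. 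The only thing requiring a little care is bookkeeping: keeping ($\ast$) as an explicit standing hypothesis, so that the corollary can subsequently be applied, via Remark~\ref{ast} and Theorem~\ref{234}, to each abelian variety in the List of Section~\ref{list} (arbitrary abelian varieties of dimension $\leq 5$, products of elliptic curves, and the cases listed there).
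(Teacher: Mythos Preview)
Your proposal is correct and matches the paper's approach exactly: the corollary is stated without a separate proof, introduced only by the phrase ``Applying the Theorem~\ref{vial} to the abelian varieties satisfying the assumption ($\ast$)\ldots'', and the paper has already remarked just before Lemma~\ref{lem} that abelian varieties satisfy ($\ast\ast$) by Kimura's finite-dimensionality result~\cite{k}. There is nothing to add.
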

   \begin{remark}
 Combining the above corollary with the Remark~\ref{CKoperate}, it follows that:
 \begin{enumerate}
 \item $\pi_i$ operates as 0 on $CH_0(X)$ for $i<d$ and $\pi_i$ operates as 0 on $G^d$ for $i\neq d$.
 \item $\pi_d$ operates nontrivially on $CH_0(X)$, but operates as identity on the subgroup $G^d\subseteq CH_0(X).$
 \item $\Pi_{d, j}$ operates as 0 on $CH_0(X)$ for $j>0$.
 \item $\Pi_{d, 0}$ operates as $\pi_d$ on $CH_0(X)$. 
 \end{enumerate}
\label{remCKoperate}
  \end{remark}
   

We will further need a notion of symmetrically distinguished cycles on abelian varieties for our Theorem~\ref{main}.
\end{section}

\begin{section}{Cycles on Abelian Varieties}
\subsection{Symmetrically Distinguished Cycles on Abelian Varieties}
Throughout this section, $X$ will denote an abelian variety over $k.$ The notion of symmetrically distinguished cycles was first introduced by P. O'Sullivan in~\cite[Definition 6.2.1]{o}. 
\begin{definition} 
\label{sd}
Let $\alpha$ be an element in the $i^{th}$ Chow group $CH^i(X)$ of $X$. Let $V_m(\alpha)$ be the $\mathbf{Q}$-subspace of $CH(X^m)$ generated by elements of the form 
 \begin{center}
  $p_{*}(\alpha^{r_1}\otimes\alpha^{r_2}\otimes\dots\otimes\alpha^{r_n})$
 \end{center}
where $n\leq m$, the $r_j$ are the integers $\geq 0$, and $p: X^n\to X^m$ is a closed immersion with each component $p:X^n\to X^m\to X$ either a projection or the composite of a projection
with the inversion morphism $(-1)_X:X\to X$. Then $\alpha$ will be called the \textbf{symmetrically distinguished} element if for every $m$ the restriction of the quotient $CH(X^m)\to CH(X^m)_{num}$
restricted to $V_m(\alpha)$ is injective. An arbitrary element of $CH^{*}(X)$ will be called \textbf{symmetrically distinguished} if each of its homogeneous component is symmetrically distinguished. 
\end{definition}

The main result of O'Sullivan's paper \cite[Corollary 6.2.6]{o} is the following theorem:
\begin{theorem}

\begin{enumerate}
 \item  For every cycle in $CH_{num}(X)$ there exists a unique symmetrically distinguished cycle in $CH(X)$ which lies above it.
 \item The symmetrically distinguished cycles in $CH(X)$ form a $\mathbf{Q}$-vector subspace, and the product of symmetrically distinguished cycles in $CH^{i}(X)$ and $CH^{j}(X)$ is a symmetrically distinguished cycle in $CH^{i+j}(X)$.
 \item For any homomorphism of abelian varieties $f:X\to Y$, the pullback $f^{*}$ and push forward $f_{*}$ preserve the symmetrically distinguished cycles.
\end{enumerate}
\label{symdist}
  \end{theorem}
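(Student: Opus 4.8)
The statement bundles three assertions: existence and uniqueness of a symmetrically distinguished lift of every numerical class, closure of the symmetrically distinguished cycles under sums and cup products, and compatibility with $f^{*}$ and $f_{*}$ along homomorphisms. The plan is to reduce everything to a structural description of symmetrically distinguished cycles in terms of the Beauville decomposition and a \emph{canonical} family of Chow projectors on all powers $X^{m}$, and then to read off (1), (2), (3) from that description. First I would fix, for every $m$, a lift to $CH(X^{m}\times X^{m})$ of the numerical K\"unneth/Beauville projectors: this is possible because abelian varieties satisfy assumption $(\ast\ast)$, so $\mathrm{Ker}(cl\colon CH^{*}(X^{m}\times X^{m})\to CH^{*}_{num}(X^{m}\times X^{m}))$ is nilpotent and Lemma~\ref{lem} applies; combined with Beauville's Theorem~\ref{bdec} this yields on each $CH(X^{m})$ a splitting refining both the codimension grading and the Beauville grading. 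Such a lift is only unique up to conjugation by $1+(\text{nilpotent})$, so the key first step is to rigidify it: average over the $S_{m}$-action on $X^{m}$ and over the inversion $(-1)_{X}$ on each factor — precisely the operations occurring in the definition of $V_{m}(\alpha)$ — to obtain a projector $\varpi_{m}$ that is $S_{m}$- and $(-1)$-equivariant and multiplicative, and prove (by induction on $m$, using nilpotence together with the compatibility of the $S_{m}$- and $(-1)$-actions with the Beauville grading) that $\varpi_{m}$ is the \emph{unique} such equivariant idempotent above the corresponding numerical projector. The target characterization is: $\alpha\in CH^{i}(X)$ is symmetrically distinguished if and only if every generator $p_{*}(\alpha^{r_{1}}\otimes\cdots\otimes\alpha^{r_{n}})$ of $V_{m}(\alpha)$ lies in $\mathrm{Im}(\varpi_{m})$ for all $m$; one then sets $s(\bar\alpha):=\varpi_{1}$ applied to any Chow lift of $\bar\alpha$, which is well defined by canonicity of $\varpi_{1}$.

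Granting this, assertion (1) follows. Surjectivity of $\{\text{symmetrically distinguished}\}\to CH_{num}(X)$ is the statement that $s(\bar\alpha)$ is symmetrically distinguished: applying the generating operations of $V_{m}$ (pushforward along the closed immersions $X^{n}\hookrightarrow X^{m}$, cup products, and $(-1)^{*}$) to an element of $\mathrm{Im}(\varpi_{1})$ lands in $\mathrm{Im}(\varpi_{m})$ — a diagram chase using that these operations intertwine the $\varpi$'s, which in turn comes from their $S$- and $(-1)$-equivariance and multiplicativity — and on $\mathrm{Im}(\varpi_{m})$ the class map $cl$ is injective by construction. Injectivity (uniqueness of the lift) is the claim that two symmetrically distinguished cycles $\alpha,\alpha'$ with $cl(\alpha)=cl(\alpha')$ coincide: writing $\delta=\alpha-\alpha'$, one must exhibit $\delta$ inside a space of the type $V_{m}$ generated jointly by $\alpha$ and $\alpha'$ on which $cl$ is injective, and deduce $\delta=0$; this rests on a polarization-type identity expressing $\delta$ through the multilinear operations, with the nilpotence used to peel off the Beauville weights one at a time.

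Assertions (2) and (3) are then formal consequences of the characterization. For a homomorphism $f\colon X\to Y$ of abelian varieties, $f$ commutes with projections and with inversion, so $f_{*}$ and $f^{*}$ send generators of $V_{m}(\alpha)$ to generators of $V_{m}(f_{*}\alpha)$, respectively $V_{m}(f^{*}\alpha)$, and intertwine $\varpi^{X}_{m}$ with $\varpi^{Y}_{m}$; hence the generators of the image space still lie in the image of the relevant $\varpi_{m}$, proving preservation of symmetrically distinguished cycles. For sums and cup products one enlarges to the joint space $V_{m}(\alpha,\beta)$ generated by monomials in $\alpha$ and $\beta$ together, notes $V_{m}(\alpha+\beta),V_{m}(\alpha\cdot\beta)\subseteq V_{m}(\alpha,\beta)$, and applies the Step-1 characterization to the pair $(\alpha,\beta)$ using that $\mathrm{Im}(\varpi_{m})$ is a subring closed under the generating operations. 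I expect the main obstacle to be precisely the two non-formal inputs of the first two paragraphs: showing that the $S_{m}$-averaging of a lifted projector actually lands in $CH(X^{m}\times X^{m})$ and is still a genuine idempotent above the numerical one, and establishing the polarization identity that forces a numerically trivial difference of symmetrically distinguished cycles into a $V$-space where $cl$ is injective — this is the heart where O'Sullivan's finer analysis of the tensor subcategory generated by the first cohomology motive of $X$ is needed, and the rest of the argument is bookkeeping around it.
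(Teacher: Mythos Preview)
The paper does not prove this theorem. It is quoted verbatim as ``the main result of O'Sullivan's paper \cite[Corollary~6.2.6]{o}'' and used as a black box; there is no proof, sketch, or argument for it anywhere in the paper. So there is nothing to compare your proposal against on the paper's side: the author treats Theorem~\ref{symdist} as an external input, exactly on the same footing as Beauville's decomposition or Vial's Theorem~\ref{vial}.

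As for the proposal itself, viewed as an attempted outline of O'Sullivan's theorem: the scaffolding you describe (canonical equivariant projectors $\varpi_m$ on all $X^m$, characterize ``symmetrically distinguished'' as landing in $\mathrm{Im}(\varpi_m)$, then read off (1)--(3)) is in the right spirit, but the two steps you flag as ``non-formal inputs'' are not small residual obstacles --- they are essentially the entire content of the theorem. Concretely: averaging a lifted idempotent over the $S_m$- and $(-1)$-actions does \emph{not} in general produce an idempotent (the average of commuting idempotents need not be idempotent, and here they need not even commute in $CH$), so your construction of $\varpi_m$ is not well defined as stated; and the ``polarization-type identity'' you invoke for uniqueness is exactly the place where O'Sullivan needs the full Tannakian analysis of the tensor category generated by $h^1(X)$, together with Kimura finite-dimensionality, to produce a \emph{multiplicative} section of $CH\to CH_{num}$ that is natural for homomorphisms of abelian varieties. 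Lemma~\ref{lem} (Jannsen's lifting lemma) only gives idempotents up to conjugation by $1+\text{nilpotent}$, and no amount of finite-group averaging rigidifies that ambiguity into a canonical, multiplicative, functorial choice --- that rigidification is precisely O'Sullivan's theorem. So your outline correctly identifies where the difficulty lies, but does not supply an argument at those points; for the purposes of this paper the right move is simply to cite \cite{o}, as the author does.
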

 \begin{remark} 

 \begin{enumerate}
 \item
  From this theorem, we can deduce that if $f:X\to X$ is an automorphism of abelian variety $X$, then the graph of $f$ denoted by $\Gamma_f$, as an element in $CH^d(X\times X)$ is a symmetrically distinguished. Indeed the class $[X] \in CH^0(X)$ is a symmetrically distinguished element and $[\Gamma_f]= (1_X\times f)_{*}([X])$. Thus the claim follows from Theorem~\ref{symdist} (3). We make this remark here, separately for future reference. Note that in the above remark, it is enough to assume $f$ is any homomorphism of abelian varieties, not necessarily an automorphism.
\item Another important example of a symmetrically distinguished cycle on $X$ is a symmetric divisor $D$ on $X$ i.e. $D\in CH^1(X)$ such that $(-1)_X^{\ast}D=D$ in $CH^1(X)$. Hence, cycles on $X$ generated by symmetric divisors on $X$ are symmetrically distinguished as it follows from Theorem~\ref{symdist} (2).
  \end{enumerate}
 \label{remsd}
 \end{remark}
 \subsection{Main Theorem for Abelian Varieties}
The \textbf{main theorem} of this paper is the following:
\begin{theorem}
\label{main}
 Let $X$ be a complex abelian variety satisfying the assumption ($\ast$).
Suppose $\Gamma\in CH^d(X\times X)$ is a correspondence from $X$ to $X$ which is symmetrically distinguished. Let 
\begin{center}
$[\Gamma]_{\ast}: \dfrac{H^d(X)}{\widetilde{N}^1H^d(X)} \to \dfrac{H^d(X)}{\widetilde{N}^1H^d(X)}$
\end{center}

be the induced morphism which is 0. 
Then $\displaystyle[\Gamma]_{*}=0: \cap_{i=1}^{d} Pic^0(X) \to  \cap_{i=1}^{d} Pic^0(X)$ induced by the restriction of $[\Gamma]_{*}:CH_0(X)\to CH_0(X)$.
\end{theorem}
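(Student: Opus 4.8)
The goal is to show that $[\Gamma]_{*}$ kills every element of $G^{d}:=\cap_{i=1}^{d}Pic^{0}(X)=Pic^{0}(X)^{\cap d}\subseteq CH_{0}(X)$; then its restriction to $G^{d}$ is automatically the zero map. The strategy is to cut $\Gamma$ down, via the refined Chow--K\"unneth decomposition of Corollary~\ref{vc}, to the single correspondence $\Pi_{d,0}\circ\Gamma\circ\Pi_{d,0}$, to note that this acts on $G^{d}$ exactly as $\Gamma$ does, and to prove it is the zero cycle because its cohomology class vanishes and it is symmetrically distinguished. Fix $x\in G^{d}$. From Corollary~\ref{vc} and Remark~\ref{remCKoperate} we use: $\pi_{i}$ annihilates $CH_{0}(X)$ for $i<d$; $\pi_{i}$ annihilates $G^{d}$ for $i\neq d$ while $\pi_{d}$ fixes it; $\Pi_{d,j}$ annihilates $CH_{0}(X)$ for $j\geq 1$; and $\Pi_{d,0}$ acts on $CH_{0}(X)$ as $\pi_{d}$, hence fixes $x$. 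Cohomologically, $(\Pi_{d,0})_{*}H^{*}(X)=Gr^{0}_{\widetilde{N}^{\bullet}}H^{d}(X)$, which is a complement of $\widetilde{N}^{1}H^{d}(X)$ in $H^{d}(X)$, so the class of $\Pi_{d,0}$ annihilates $\widetilde{N}^{1}H^{d}(X)$ and all $H^{k}(X)$ with $k\neq d$.

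A preliminary point is that all of these projectors may be taken to be symmetrically distinguished cycles: the Deninger--Murre projectors $\pi_{i}$ are known to be symmetrically distinguished, and since homological and numerical equivalence coincide on abelian varieties, each idempotent cohomology class $[\Pi_{d,j}]$ has a unique symmetrically distinguished lift; by Theorem~\ref{symdist} these lifts are again idempotent, mutually orthogonal and sum to $\pi_{d}$ (a product of symmetrically distinguished cycles is symmetrically distinguished, and a numerically trivial symmetrically distinguished cycle is $0$), and any such choice of lifts still satisfies the conclusions of Theorem~\ref{vial}. We use these lifts throughout. Moreover composition of correspondences on $X$ is built from products and from pullbacks and pushforwards along the projection homomorphisms of $X^{2}$ and $X^{3}$, all of which preserve symmetric distinguishedness by Theorem~\ref{symdist}(2),(3); hence $\pi_{a}\circ\Gamma\circ\pi_{b}$ and $\Pi_{d,j}\circ\Gamma\circ\Pi_{d,k}$ are symmetrically distinguished, $\Gamma$ being so.

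Now perform the reduction. Writing $\Delta_{X}=\sum_{a}\pi_{a}$ and using $(\pi_{b})_{*}x=0$ for $b\neq d$ and $(\pi_{d})_{*}x=x$, we get $[\Gamma]_{*}x=\sum_{a}(\pi_{a}\circ\Gamma\circ\pi_{d})_{*}x$. For $a\neq d$ the cycle $\pi_{a}\circ\Gamma\circ\pi_{d}$ acts on $H^{*}(X)$ as the composite $H^{*}(X)\to H^{d}(X)\to H^{d}(X)\to H^{*}(X)$, where the middle map is the degree-preserving action of $\Gamma$ and the last is the action of $\pi_{a}$, which vanishes on $H^{d}(X)$; so $\pi_{a}\circ\Gamma\circ\pi_{d}$ is homologically, hence numerically, trivial, hence (being symmetrically distinguished) $0$ in $CH^{d}(X\times X)$. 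Thus $[\Gamma]_{*}x=(\pi_{d}\circ\Gamma\circ\pi_{d})_{*}x=\sum_{j,k}(\Pi_{d,j}\circ\Gamma\circ\Pi_{d,k})_{*}x$. Since $(\Pi_{d,k})_{*}x=0$ for $k\geq 1$ and $(\Pi_{d,0})_{*}x=x$, while $\Pi_{d,j}$ kills all of $CH_{0}(X)$ for $j\geq 1$, only the $(j,k)=(0,0)$ term survives: $[\Gamma]_{*}x=(\Pi_{d,0}\circ\Gamma\circ\Pi_{d,0})_{*}x$. Finally the niveau filtration is stable under every correspondence (immediate from Definition~\ref{nev}), so $[\Gamma]_{*}$ preserves $\widetilde{N}^{1}H^{d}(X)$; the hypothesis that $[\Gamma]_{*}$ induces $0$ on $H^{d}(X)/\widetilde{N}^{1}H^{d}(X)$ therefore means $[\Gamma]_{*}(H^{d}(X))\subseteq\widetilde{N}^{1}H^{d}(X)$. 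Since the class of $\Pi_{d,0}$ vanishes on $\widetilde{N}^{1}H^{d}(X)$ and on $H^{k}(X)$ for $k\neq d$, the cycle $\Pi_{d,0}\circ\Gamma\circ\Pi_{d,0}$ acts as $0$ on $H^{*}(X)$, i.e.\ is homologically, hence numerically, trivial; being symmetrically distinguished it is $0$ in $CH^{d}(X\times X)$. Hence $[\Gamma]_{*}x=0$, completing the proof.

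The step I expect to be the main obstacle is the one invoked in the second paragraph: showing that the Chow--K\"unneth projectors $\pi_{i}$ and the refined projectors $\Pi_{d,j}$ of Corollary~\ref{vc} can be chosen symmetrically distinguished without losing the vanishing properties of Theorem~\ref{vial} and Remark~\ref{remCKoperate}. Granting this, the remainder is a formal application of O'Sullivan's principle that a numerically trivial symmetrically distinguished cycle is zero, applied to the cycles $\pi_{a}\circ\Gamma\circ\pi_{d}$ $(a\neq d)$ and $\Pi_{d,0}\circ\Gamma\circ\Pi_{d,0}$, together with the Chow-level vanishing of the $\Pi_{d,j}$ $(j\geq 1)$ on $0$-cycles.
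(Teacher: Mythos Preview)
Your proof is correct and follows essentially the same approach as the paper: the obstacle you flag (choosing the $\Pi_{d,j}$ symmetrically distinguished while retaining the vanishing properties of Corollary~\ref{vc}) is exactly the content of Proposition~\ref{sdv}, and the remainder is O'Sullivan's principle applied just as in the paper's proof. The only difference is that you compose with $\Pi_{d,0}$ on both sides, working with $\Pi_{d,0}\circ\Gamma\circ\Pi_{d,0}$, whereas the paper uses the one-sided $\Gamma\circ\Pi_{d,0}$; your two-sided version makes the cohomological-triviality step cleaner, since the hypothesis only yields $[\Gamma]_{*}(H^{d}(X))\subseteq\widetilde{N}^{1}H^{d}(X)$ and it is the extra left-hand $\Pi_{d,0}$ that then kills $\widetilde{N}^{1}H^{d}(X)$.
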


The proof of Theorem~\ref{main} will follow from the following proposition.
\begin{proposition} 
\label{sdv}
 Let $X$ be a complex abelian variety satisfying the assumption ($\ast$). Then Theorem~\ref{vial} holds for $X$ and further there exist $\Phi_{d,j}$ symmetrically distinguished in the sense of Definition~\ref{sd} such that $\Phi_{d,j}\in CH^d(X\times X)$ are mutually orthogonal, idempotents and $\displaystyle\pi_d =\sum_{j} \Phi_{d,j}$ in $CH^d(X\times X)$ and the properties 1 to 3 of Corollary~\ref{vc} hold for these modified $\Phi_{d,j}$'s when replaced by $\Pi_{d,j}.$
\end{proposition}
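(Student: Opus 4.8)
The plan is to take Vial's idempotents $\Pi_{d,j}$ provided by Theorem~\ref{vial} (applicable since $X$ satisfies $(\ast)$, and $(\ast\ast)$ holds for abelian varieties by Kimura) and to replace each by its symmetrically distinguished representative in the sense of O'Sullivan, using the lifting Lemma~\ref{lem}. First I would observe that the cohomology classes $[\Pi_{d,j}]$ are mutually orthogonal idempotents in $H^{2d}(X\times X)$ summing to $[\pi_d]$; since $\pi_d$ itself can be chosen symmetrically distinguished (it is a universally defined cycle, or more directly one can note that the Chow--K\"unneth components of an abelian variety constructed by Deninger--Murre are polynomials in the class of the diagonal and in symmetric divisors, hence symmetrically distinguished by Theorem~\ref{symdist}(2) and Remark~\ref{remsd}), it suffices to work inside the subring $SD(X\times X)\subseteq CH^d(X\times X)$ of symmetrically distinguished correspondences supported by $\pi_d$, i.e. in $\pi_d\circ SD(X\times X)\circ\pi_d$.

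The key point is that O'Sullivan's theorem (Theorem~\ref{symdist}(1)) gives, for each numerically trivial class, a \emph{unique} symmetrically distinguished lift, and moreover $SD(X\times X)$ is a subring under composition (this follows from parts (2) and (3) of Theorem~\ref{symdist}: composition of correspondences is built from tensor products and push-pull along homomorphisms of the abelian variety $X\times X$). Crucially, the kernel of $SD(X\times X)\to CH_{num}^d(X\times X)$ is \emph{zero} — this is the defining injectivity in Definition~\ref{sd} — so on the symmetrically distinguished subring, numerical and rational equivalence coincide. Hence the mutually orthogonal idempotents $[\Pi_{d,j}]$, viewed in $CH_{num}^d(X\times X)$, lift uniquely to mutually orthogonal idempotents $\Phi_{d,j}\in SD(X\times X)$: idempotency $\Phi_{d,j}\circ\Phi_{d,j}=\Phi_{d,j}$ and orthogonality $\Phi_{d,j}\circ\Phi_{d,k}=0$ hold because both sides are symmetrically distinguished and become equal numerically, so by injectivity they are equal in $CH^d$. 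Likewise $\sum_j\Phi_{d,j}=\pi_d$ since both sides are symmetrically distinguished and numerically equal. By construction $[\Phi_{d,j}]_\ast=[\Pi_{d,j}]_\ast$ on $H^\ast(X)$, so $(\Phi_{d,j})_\ast H^\ast(X)=Gr^j_{\widetilde N^\bullet}H^d(X)$.

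It remains to check that properties 1--3 of Corollary~\ref{vc} survive the replacement. Here I would invoke the last sentence of Theorem~\ref{vial}: ``for any such choice of idempotents'' the vanishing statements on $CH_l(X)$ hold — that is, Vial's conclusions are stated for an \emph{arbitrary} family of mutually orthogonal idempotents lifting the $[\Pi_{d,j}]$, not just one privileged choice. Alternatively, by the moreover-clause of Lemma~\ref{lem}, any two lifts differ by conjugation by $1+\eta$ with $\eta$ nilpotent lying above the identity, and conjugation by $1+\eta$ preserves the property of acting as $0$ on a given $CH_l(X)$ (since $\eta$ acts nilpotently and $\Pi_{d,j}$ acts as $0$ there, a short computation with $\Phi_{d,j}=(1+\eta)\circ\Pi_{d,j}\circ(1+\eta)^{-1}$ shows $(\Phi_{d,j})_\ast=0$ on $CH_l(X)$ as well). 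Either route gives properties 1--3 for the $\Phi_{d,j}$.

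The main obstacle is the first step: verifying that $SD(X\times X)$ is closed under composition of correspondences, and that $\pi_d$ (hence the whole setup) lies in it. This requires unwinding the definition of composition $\beta\circ\alpha = p_{13\ast}(p_{12}^\ast\alpha\cdot p_{23}^\ast\beta)$ on $X\times X$ into operations — exterior product, flat pullback along projections, intersection with the diagonal of the middle factor, and proper pushforward — each of which must be recognized as preserving symmetric distinction via Theorem~\ref{symdist}; the pullbacks $p_{ij}^\ast$ are along homomorphisms of abelian varieties so part~(3) applies, the intersection product is handled by part~(2), but the pushforward $p_{13\ast}$ along the projection $X^3\to X^2$ (a homomorphism of abelian varieties) is again covered by part~(3). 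One must be slightly careful that these $p_{ij}$ on $(X\times X)^3 = X^6$ are indeed composites of projections and inversions as required in Definition~\ref{sd}, which they are. Granting this, the rest is formal.
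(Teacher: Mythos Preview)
Your proposal is correct and follows essentially the same route as the paper: lift Vial's $\Pi_{d,j}$ to their unique symmetrically distinguished representatives via O'Sullivan's Theorem~\ref{symdist}(1), deduce idempotency, orthogonality, and $\sum_j\Phi_{d,j}=\pi_d$ from the fact that symmetrically distinguished cycles which agree numerically already agree in $CH^d$, and obtain properties 1--3 from the conjugation clause of Lemma~\ref{lem} (the paper cites this as \cite[1.4.4]{v}, which is exactly the ``for any such choice'' statement you invoke). You are in fact more explicit than the paper on two points it leaves implicit---that composition of correspondences preserves symmetric distinction (needed to know $\Phi_{d,j}\circ\Phi_{d,k}$ is symmetrically distinguished) and that $\pi_d$ itself is symmetrically distinguished---so your write-up is, if anything, a slight improvement in completeness.
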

\begin{proof}
By applying Theorem~\ref{symdist} (1) for $X\times X$, can lift the elements $\Pi_{d,j}$ as in Theorem~\ref{vial} modulo homological equivalence (recall that homological equivalence is equivalent to numerical equivalence for abelian varieties~\cite{lib}) to $CH^d(X\times X)$ such that $\exists$ unique $\Phi_{d,j}\in CH^d(X\times X)$ which are symmetrically distinguished and $\Phi_{d,j} = \Pi_{d,j}$ in $CH^d(X\times X)_{num}$.  

Now let us check that the $\{\Phi_{d,j}\}$ satisfy all the properties of Theorem~\ref{vial}. To see that $\Phi_{d,j}\circ\Phi_{d,s}=\delta_{js}$, for Kronecker symbol, note that this equation holds modulo homological equivalence as it holds for $\Pi_{d,j}$ replaced by $\Phi_{d,j}.$ Since both the cycles in the first equation are symmetrically distinguished, these are equal in $CH^d(X\times X)$. 

Similarly, one can see that $\Phi_{d,j}$'s are idempotents. By Lemma~\ref{lem} $\{\Phi_{d,j}\}_{j}$ and $\{\Pi_{d,j}\}_{j}$ are conjugate  and satisfy the properties in Corollary~\ref{vc} by \cite[1.4.4]{v}.
\end{proof}
Now for the abelian variety $X$, we have a Chow-K\"unneth decomposition $\{\pi_i\}_{i=0}^{2d}$ such that $\Delta_X= \sum_i\pi_i$ by Theorem~\ref{CK-Ab}. By above discussion $\pi_d=\sum_j\Pi_{d,j}$ in $CH^d(X\times X)$,  where $\pi_i, \Pi_{d,j}$'s are symmetrically distinguished as in Proposition~\ref{sdv}. \\
Now we are all set to prove our main theorem.
\paragraph{Proof of the main theorem~\ref{main}:}
\begin{proof}
Consider the following expressions in the ring $CH^d(X\times X)$ under composition of correspondences denoted by $\circ.$
\begin{align}
\Gamma\circ\Pi_{d,0}&= \Gamma\circ(\pi_d-\sum_{r>0}\Pi_{d,r})\\
                                              &=\Gamma\circ\pi_d-\sum_{r>0}\Gamma\circ\Pi_{d,r}
\end{align}

Each of the terms in the above expression induces an endomorphism of $CH_0(X)$, and by the Remark~\ref{remCKoperate}, it follows that as endomorphisms of $CH_0(X)$: 
\begin{center}
$[\Gamma\circ\Pi_{d,0}]_{\ast}=[\Gamma\circ\pi_d]_{\ast}:CH_0(X)\to CH_0(X)$.

\end{center}

Next we show that $\Gamma\circ\Pi_{d,0}$ is 0 in $CH^d(X\times X)$.

Consider the cycle $\Gamma\circ\Pi_{d,0}\in CH^d(X\times X)$ modulo numerical equivalence (equivalently homological equivalence). By the assumption on $\Gamma$, $[\Gamma]_{\ast}$ is 0 on the image of $[{\Pi_{d,0}}]_{\ast}$, which is $\dfrac{H^d(X)}{\widetilde{N}^1H^d(X)}$, by the Corollary~\ref{vc} for $j=0$. Hence $\Gamma\circ\Pi_{d,0}$ induces the 0 morphism $H^{\ast}(X)\to H^{\ast}(X)$. 

Recall from the Proposition~\ref{sdv}, and the remark following it, that the cycle $\Gamma\circ\Pi_{d,0}$ is symmetrically distinguished. Hence it is numerically equivalent to 0, which implies it is actually rationally equivalent to 0.  

Thus, the endomorphism of $CH_0(X)$ induced by such a cycle is again 0. Hence by earlier discussion, 
$[\Gamma\circ\pi_d]_{\ast}:CH_0(X)\to CH_0(X)$ is 0. In other words, $[\Gamma]_{\ast}:CH_0(X)\to CH_0(X)$ is 0 on the image of $[\pi_d]_{\ast}:CH_0(X)\to CH_0(X).$ We know that Im $([\pi_d]_{\ast}:CH_0(X)\to CH_0(X))=\cap_{i=1}^{d} Pic^0(X)$ i.e. 0-cycles which are rationally equivalent to sums of intersection of $d$ divisors of degree 0, where the equality follows from~\cite[Proposition 4(a)]{b}.
Hence $[\Gamma]_{*}: \cap_{i=1}^{d} Pic^0(X) \to  \cap_{i=1}^{d} Pic^0(X)$ is 0 map.
\end{proof}

In the remaining part of this section, we apply the Theorem~\ref{main} to particular symmetrically distinguished cycles. 
\begin{corollary}
\label{mc}
Let $f :X \to X$ be an automorphism of $X$ where $X$ is a complex abelian variety of dimension $d$ satisfying the assumption ($\ast$) as in the Section~\ref{refinedCK}.
Suppose the induced morphism 
\begin{center}
$f_{\ast}: \dfrac{H^d(X)}{\widetilde{N}^1H^d(X)} \to \dfrac{H^d(X)}{\widetilde{N}^1H^d(X)}$
\end{center}
 is the identity. Then $f_{*}=Id: \cap_{i=1}^{d} Pic^0(X) \to  \cap_{i=1}^{d} Pic^0(X)$ induced by the restriction of $f_{*}: CH_0(X)\to CH_0(X)$. 
\end{corollary}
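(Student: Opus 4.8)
The plan is to reduce the statement to the main theorem (Theorem \ref{main}) by choosing the right correspondence. Set $\Gamma := \Gamma_f - \Delta_X \in CH^d(X\times X)$, where $\Gamma_f$ is the graph of $f$ and $\Delta_X$ is the diagonal. First I would observe that $\Gamma$ is symmetrically distinguished: by Remark \ref{remsd}(1), the graph $\Gamma_f = (1_X\times f)_{\ast}([X])$ is symmetrically distinguished since $[X]\in CH^0(X)$ is, and $f$ is a homomorphism of abelian varieties (an automorphism, in fact); the diagonal $\Delta_X = (1_X\times \mathrm{id})_{\ast}([X])$ is likewise symmetrically distinguished; and by Theorem \ref{symdist}(2) the $\mathbf{Q}$-vector space of symmetrically distinguished cycles is closed under subtraction, so $\Gamma = \Gamma_f - \Delta_X$ is symmetrically distinguished.

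Next I would check the cohomological hypothesis of Theorem \ref{main} for this $\Gamma$. On singular cohomology one has $[\Gamma]_{\ast} = [\Gamma_f]_{\ast} - [\Delta_X]_{\ast} = f_{\ast} - \mathrm{Id}$ on $H^d(X)$, and since the Niveau filtration $\widetilde{N}^{\bullet}$ is stable under the action of correspondences, $[\Gamma]_{\ast}$ descends to the quotient $H^d(X)/\widetilde{N}^1 H^d(X)$, where it equals $f_{\ast} - \mathrm{Id}$. By hypothesis $f_{\ast}$ is the identity on this quotient, so $[\Gamma]_{\ast} = 0$ there. Theorem \ref{main} now applies and gives that $[\Gamma]_{\ast} = 0$ on $\cap_{i=1}^d Pic^0(X)$, i.e. $(\Gamma_f)_{\ast} = (\Delta_X)_{\ast} = \mathrm{Id}$ on $\cap_{i=1}^d Pic^0(X)$. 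Since $(\Gamma_f)_{\ast}$ on $CH_0(X)$ is exactly $f_{\ast}$, this says $f_{\ast} = \mathrm{Id}$ on $\cap_{i=1}^d Pic^0(X)$, which is the claim. (One should note that $f_{\ast}$ does indeed map $\cap_{i=1}^d Pic^0(X)$ into itself, which is clear since $f$ is an automorphism and $f_{\ast}$ respects the intersection product of divisors and preserves $Pic^0$.)

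The only real content here is already packaged in Theorem \ref{main}; the corollary is essentially a matter of verifying that $\Gamma_f - \Delta_X$ meets its hypotheses. The one point that deserves a word of care is the passage from "$f_{\ast} = \mathrm{Id}$ on the quotient of $H^d(X)$" to "$[\Gamma_f - \Delta_X]_{\ast} = 0$ on the quotient" — this uses that the action on cohomology is additive in the correspondence and that $\widetilde{N}^1$ is preserved, both of which are standard from the setup in Section \ref{maintheorem}. No genuine obstacle arises; the work is entirely in the already-proved Theorem \ref{main}.
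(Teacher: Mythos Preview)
Your proposal is correct and follows exactly the same approach as the paper: set $\Gamma = \Gamma_f - \Delta_X$, observe via Remark~\ref{remsd}(1) (and Theorem~\ref{symdist}(2)) that this is symmetrically distinguished, and apply Theorem~\ref{main}. The paper's own proof is just a two-line version of what you wrote, with the verification of the cohomological hypothesis and the symmetric-distinguishedness of $\Delta_X$ left implicit.
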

\begin{proof}
Take $\Gamma=\Gamma_f-\Delta_X$ in the Theorem~\ref{main}. Now $\Gamma$ is symmetrically distinguished, which follows from the Remark~\ref{remsd}(1). Hence, the corollary follows from Theorem~\ref{main}.
\end{proof}

\begin{remark}
\begin{enumerate}
\item The Corollary~\ref{mc} is non-trivial, only in the Cases 1, 3, 4, 5 in the List in Section~\ref{list}, because in the Case 2, there are no non-trivial automorphisms of the abelian varieties.
\item In view of the Remark~\ref{remsd}, it is an interesting question to write down other examples of symmetrically distinguished cycles which satisfy the assumptions of the Theorem~\ref{main}.    
\end{enumerate}
\end{remark}

\subsection{Examples}\label{eg}
In this section, we will see some examples where the assumptions of the Corollary~\ref{mc} hold true.  
\begin{example}
\item 
Let $E$ be an elliptic curve over $\mathbf{C}$ given by $\mathbf{C}/\Lambda$ for a lattice $\Lambda$ in $\mathbf{C}$. Let $X=E^d=\mathbf{C}^d/\Lambda^d$. Let $A\in SL_d(\mathbf{Z})$ be a non-trivial element of the special linear group $SL _d(\mathbf{Z})$. Now $A$ acts naturally on $\mathbf{C}^d$ such that $A(\Lambda^d)\subset\Lambda^d$.  Thus $A$ induces $f\in Aut(X)$. Let $\gamma_1,\cdots, \gamma_d$ in $\mathbf{C}$ be the eigenvalues of $A$. Given co-ordinates $z_1,\cdots, z_d$ on $\mathbf{C}^d$, we get $dz_1,\cdots, dz_d$ as $\mathbf{C}$-basis for $H^0(X, \Omega_X)$. Now $f$ induces action on the $\mathbf{C}$-vector space $H^0(X, \Omega_X)$ with eigenvalues $\gamma_1,\cdots, \gamma_d$. Hence, $f_{\ast}$ acts on $ H^0(X, \Omega_X^d)\simeq\displaystyle\wedge_1^d H^0(X, \Omega_X)$ by det $A=\gamma_1\cdots\gamma_d=1$. 
Hence, we have $f_{\ast}=Id: H^0(X, \Omega_X^d)\to H^0(X, \Omega_X^d)$.
\end{example}
\begin{example}\label{foreg}
Next, we will give other examples of situations where $f$ is an automorphism of $X$ and $f_{\ast}:H^d(X)\to H^d(X)$ is not the identity but $f_{\ast}: \dfrac{H^d(X)}{\widetilde{N}^1H^d(X)}\to \dfrac{H^d(X)}{\widetilde{N}^1H^d(X)}$ is Id. 
Let us recall a construction of abelian varieties with real multiplication. We follow the exposition from~\cite[1.13.5]{gord}.\\
\textbf{Abelian varieties with real multiplication:}
Let $K$ be a totally real number field such that deg~$[K:\mathbf{Q}] =d$. Let $\alpha\mapsto \alpha_{(j)}:K\to\mathbf{R}$ be the distinct $\mathbf{R}$-embeddings for $1\leq j\leq d$. Let $\tau_j\in\mathbf{C}$ be fixed such that $Im(\tau_j)>0$ for $1\leq j\leq d$. For the ring of integers $\mathcal{O}$ of $K$,
define 
\begin{center}
$\phi: \mathcal{O}\oplus \mathcal{O}\to \mathbf{C}^d$ by $(\alpha,\beta)\mapsto (\alpha_{(1)}\tau_1+\beta_{(1)},\cdots,\alpha_{(d)}\tau_d+\beta_{(d)}).$
\end{center}
 Define $L:=Im~\phi$. Note that $L$ is a lattice in $\mathbf{C}^d$. Define $X:=\mathbf{C}^d/L$.
One can define a Riemann form on $X$ which will imply that the complex torus $X$ is a complex abelian variety. 

Now the inclusion $K\to End^0(X)$ restricts to $\mathcal{O}\to End(X)$. So that for the units of $\mathcal{O}, \mathcal{O}^{\ast} \to Aut(X)$. By Dirichlet's unit theorem, have that $\mathcal{O}^{\ast}$ has rank $d-1$. So if $d>1$, one can find $\gamma \in \mathcal{O}^{\ast}$ which is of infinite order. 

The element $\gamma\in \mathcal{O}^{\ast}$ defines $f_{\gamma}\in Aut(X)$ given by 
\begin{center}
$f_{\gamma}(z_1,\cdots,z_d)=(\gamma_{(1)}z_1,\cdots,\gamma_{(d)}z_d)$. 
\end{center}

Now we will consider the action of $f_{\gamma}$ on the cohomology of $X$. $f_{\gamma}$ acts on $X$ via a matrix $M_{\gamma}\in GL_d(\mathbf{C})$ with eigenvalues $\{\gamma_{(1)},\cdots,\gamma_{(d)}\}.$  So the eigenvalues of $f_{\gamma}$ on $H^d(X)$ are given by various products of these eigenvalues which in general need not be 1, which implies that in general $f_{\gamma}$ is not identity on $H^d(X)$. The induced action of $f_{\gamma}$ on $H^{0}(X,\Omega_X^d)$ is given by a scalar with the eigenvalue $\gamma_{(1)}\gamma_{(2)}\cdots\gamma_{(d)}=$ det $M_{\gamma}$ which is also the Norm of $\gamma$, $Nm_{K/\mathbf{Q}}(\gamma)$. Since $\gamma\in\mathcal{O}^{\ast}$, $Nm_{K/\mathbf{Q}}(\gamma)=\pm1$. So if $Nm_{K/\mathbf{Q}}(\gamma)=1$, take $\gamma$ and when $Nm_{K/\mathbf{Q}}(\gamma)=-1$, can take $\gamma^2$ in $\mathcal{O}^{\ast}$ so that $Nm_{K/\mathbf{Q}}(\gamma^2)=1$. Thus the respective eigenvalue on $H^{0}(X,\Omega_X^d)$ is 1.   

More concretely, let $K=\mathbf{Q}(\sqrt 7)$ and so its ring of integers is $\mathcal{O}=\mathbf{Z}[\sqrt7]$. Here $d=2=dim [K:\mathbf{Q}]. $
Take the fundamental unit $\gamma= 8+3\sqrt 7$ of $\mathcal{O}.$ So $\gamma_{(1)}=8+3\sqrt 7$ and $\gamma_{(2)}=8-3\sqrt 7$. So the determinant of the matrix $M_{\gamma}=1$ (We have taken $\gamma$ such that the norm $Nm_{K/\mathbf{Q}}(\gamma)=1$). But eigenvalues of $M_{\gamma}$ on $H^2(X)$ are $\{{\gamma_{(1)}}^2,{\gamma_{(2)}}^2, 1\}$. One can check ${\gamma_{(1)}}^2>1.$

So we are in a situation where the automorphism $f_{\gamma}$ of a complex abelian surface $X$ acts non-trivially on $H^2(X)$, but acts trivially on $H^0(X, \Omega_X^2)$.
\end{example}
In the next section, we discuss some applications. 
 \end{section}
 
 \begin{section}{Applications}\label{App}

\subsection{Cycles using Pontryagin Product}
For an abelian variety $X$ of dimension $d$ over $k$, an algebraically closed subfield of $\mathbf{C}$, S. Bloch~\cite[Lemma~1.2(c)]{bl1} proved that
 $G^d:=Pic^0(X)^{\cap d}$ is generated by expressions of the form 
\begin{center}
$((a_1)-(0))\ast((a_2)-(0))\ast\cdots\ast((a_d)-(0))$ 
\end{center}
where $a_1,\cdots, a_d\in X(k)$ and $\gamma\ast\gamma'$ is the Pontryagin product of cycles $\gamma$ and $\gamma'$.  
Thus, for $f$ as in Corollary~\ref{mc}, we get identities
\begin{align}
((a_1)-(0))\ast((a_2)-(0))\ast\cdots\ast((a_d)-(0))=& f_{\ast}(((a_1)-(0))\ast((a_2)-(0))\ast\cdots\ast((a_d)-(0)))\\
                                                                            =& ((f(a_1))-(0))\ast((f(a_2))-(0))\ast\cdots\ast((f(a_d))-(0)).
\end{align}
In particular, for $d=2$, we have a corollary:
\begin{corollary}
Let $f$ be an automorphism of a complex abelian surface $X$ such that $f_{\ast}$ acts trivially on $H^0(X, \Omega_X^2)$, we have identities between rational equivalence classes of 0-cycles
\begin{center}
$(f(a+b)) - (f(a)) - (f(b)) = (a+b) - (a) - (b)$. 
\end{center}
\end{corollary}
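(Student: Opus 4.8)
The plan is to obtain the identity as a formal consequence of Corollary~\ref{mc} together with S. Bloch's description of $G^{2}=Pic^{0}(X)^{\cap 2}$ by Pontryagin products recalled above; beyond checking the cohomological hypothesis of Corollary~\ref{mc}, no new input is needed.

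First I would observe that a complex abelian surface $X$ satisfies the assumption ($\ast$), since $\dim X=2\leq 5$ and abelian varieties satisfy property \textbf{B} (Lieberman~\cite{lib}). Next I would verify that the stated hypothesis, that $f$ is symplectic (i.e. $f_{\ast}$ acts trivially on $H^{0}(X,\Omega_X^{2})$), implies the hypothesis of Corollary~\ref{mc}, namely that $f_{\ast}$ is the identity on $H^{2}(X)/\widetilde{N}^{1}H^{2}(X)$. This is precisely the implication ``hypothesis of Conjecture~\ref{mq} $\Rightarrow$ hypothesis of Conjecture~\ref{mq1}'' discussed after Conjecture~\ref{mq1}: it requires the Lefschetz standard conjecture in dimension $2-2=0$, which is vacuous, and GHC($X,1,2$), which holds for any smooth projective surface by the Lefschetz theorem on $(1,1)$-classes (there $N^{1}H^{2}(X)=NS(X)_{\mathbf{Q}}$ is the maximal sub-Hodge structure of $F^{1}H^{2}(X,\mathbf{C})\cap H^{2}(X)$ and $H^{2}(X)/N^{1}H^{2}(X)\cong H^{2}(X)_{tr}$); moreover for surfaces $\widetilde{N}^{1}H^{2}(X)=N^{1}H^{2}(X)$ by the Lefschetz-isomorphism remark in Section~\ref{maintheorem}. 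Thus Corollary~\ref{mc} applies and gives $f_{\ast}=Id$ on $\cap_{i=1}^{2}Pic^{0}(X)=G^{2}$.

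It then remains to unwind the Pontryagin product. By Bloch~\cite[Lemma~1.2(c)]{bl1}, $G^{2}$ is generated by the $0$-cycles $((a)-(0))\ast((b)-(0))$ with $a,b\in X(\mathbf{C})$, and since $\ast$ is pushforward along the addition map $m\colon X\times X\to X$ one has $((a)-(0))\ast((b)-(0))=(a+b)-(a)-(b)+(0)$ in $CH_{0}(X)$. As $f$ is a homomorphism of abelian varieties, $f\circ m=m\circ(f\times f)$, so $f_{\ast}$ commutes with $\ast$, $f_{\ast}(c)=(f(c))$ and $f(0)=0$; applying $f_{\ast}$ to the generator and using that it acts as the identity on $G^{2}$ gives
\begin{center}
$(f(a+b))-(f(a))-(f(b))+(0)=(a+b)-(a)-(b)+(0)$
\end{center}
in $CH_{0}(X)$, whence the asserted identity after cancelling $(0)$. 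The only step requiring genuine care is the cohomological reduction in the second paragraph — ensuring that for surfaces the niveau quotient $H^{2}(X)/\widetilde{N}^{1}H^{2}(X)$ coincides with the transcendental quotient, so that ``$f$ symplectic'' is exactly the hypothesis of Corollary~\ref{mc}; everything else is bookkeeping with Pontryagin products.
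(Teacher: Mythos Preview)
Your proposal is correct and follows essentially the same approach as the paper: specialize the Pontryagin-product identities (obtained from Corollary~\ref{mc} via Bloch's description of $G^{d}$) to $d=2$ and expand $((a)-(0))\ast((b)-(0))=(a+b)-(a)-(b)+(0)$. The paper leaves implicit the step you spell out in your second paragraph, namely that for a surface the symplectic hypothesis $f_{\ast}=\mathrm{Id}$ on $H^{0}(X,\Omega_X^{2})$ already forces $f_{\ast}=\mathrm{Id}$ on $H^{2}(X)/\widetilde{N}^{1}H^{2}(X)$; your justification via $\widetilde{N}^{1}H^{2}=N^{1}H^{2}$ and the Lefschetz $(1,1)$ theorem is exactly what is needed there.
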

The main point of this particular case is that, a priori the two cycles on LHS and RHS are different, but still rationally equivalent to each other in $CH_0(X)$. This does not seem to be clear by a direct argument even if $X=E\times E$ for an elliptic curve $E$, and $f\in SL_2({\mathbb Z})\subseteq Aut(X)$ is an arbitrary element, which in general yields a non-trivial automorphism of $CH_0(X)$.
  \subsection{Action of Automorphisms of Kummer surfaces or generalized Kummer varieties on $CH_0$}
   
  Here we consider complex projective Kummer surfaces and their higher dimensional analogs called \textit{generalized Kummer varieties}. 
  Huybrechts~\cite{h}-Voisin~\cite{vo1} have proved the following theorem:
\begin{theorem}
\label{hv}
Let $f$ be an automorphism of projective K3 surface $X$ of finite order which is symplectic (i.e. which acts as the identity on $H^0(X, \Omega_X^2)$, then $f_{\ast}$ acts as the identity on $CH_0(X)_0$.
\end{theorem}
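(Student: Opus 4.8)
\textbf{Proof proposal for Theorem~\ref{hv} (Huybrechts--Voisin).}

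The plan is to follow the strategy of Huybrechts and Voisin, which is ultimately built on the work of Beauville--Voisin on the Chow ring of a K3 surface together with finiteness properties of finite-order symplectic automorphisms. First I would recall the Beauville--Voisin result: on a projective K3 surface $X$ there is a distinguished class $o_X \in CH_0(X)$ (the class of any point on a rational curve), such that the image of the intersection product $CH^1(X)\otimes CH^1(X)\to CH_0(X)$ and all Chern classes of $X$ are proportional to $o_X$. Consequently $CH_0(X) = \mathbf{Q}\, o_X \oplus CH_0(X)_{hom}$, and the subgroup $CH_0(X)_0$ in the statement should be identified with $CH_0(X)_{hom}$ (degree-zero, equivalently homologically trivial, $0$-cycles). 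Since $f$ is an automorphism it fixes the canonical class and sends a rational curve to a rational curve, so $f_\ast o_X = o_X$; hence $f_\ast$ preserves this decomposition and it suffices to prove $f_\ast = \mathrm{Id}$ on $CH_0(X)_{hom}$.

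Next I would bring in the symplectic hypothesis. Because $f$ has finite order $N$, one can form the projector $\frac{1}{N}\sum_{k=0}^{N-1} (f^k)_\ast$, or work with $g_\ast := \mathrm{Id} - f_\ast$ acting on $CH_0(X)_{hom}$; the goal is $g_\ast = 0$. The key input is a result controlling $0$-cycles on surfaces via the holomorphic $2$-form: by Mumford-type/Bloch-type arguments (and more precisely by the Bloch conjecture machinery established for such automorphisms), if $f$ acts trivially on $H^{2,0}(X)$ then the correspondence $\Gamma_f - \Delta_X$ acts trivially on $CH_0(X)_{hom}$. Voisin's approach uses the fact that for a finite-order symplectic automorphism the quotient $X/\langle f\rangle$ has only ADE singularities whose resolution is again (birational to) a K3 surface, reducing the claim to functoriality of $CH_0$ under the rational map $X \dashrightarrow \widetilde{X/\langle f\rangle}$ and the fact that birational K3 surfaces have isomorphic $CH_0$. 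Huybrechts's approach instead uses the derived category: a symplectic automorphism acts trivially on the Hodge structure of the transcendental lattice, hence (by Orlov-type results) is a composition of spherical twists and ``acts trivially'' on the Chow motive in the relevant sense, yielding the claim on $CH_0(X)_{hom}$.

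The main obstacle is precisely the step that upgrades ``$f$ acts trivially on $H^{2,0}$'' to ``$f_\ast = \mathrm{Id}$ on $CH_0(X)_{hom}$'': this is a form of the Bloch conjecture for the correspondence $\Gamma_f-\Delta_X$, which is false in general for arbitrary correspondences and is only known here because of the very special geometry of finite-order symplectic automorphisms of K3 surfaces (global Torelli, classification of such automorphisms, ADE quotient singularities, or finite-dimensionality of the K3 motive in the sense of Kimura--O'Sullivan). I would therefore structure the proof so that this input is isolated as the single hard lemma, citing \cite{h} and \cite{vo1} for its proof, and then assemble the elementary pieces (Beauville--Voisin decomposition, $f_\ast o_X = o_X$, reduction to the homologically trivial part) around it. Note that, in contrast, the later Theorem~\ref{introkm} of this paper handles Kummer surfaces and automorphisms of possibly infinite order, where this K3-specific finite-order input is unavailable and is replaced by the abelian-variety techniques of Theorem~\ref{main}.
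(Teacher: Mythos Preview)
Your proposal is a reasonable sketch of the Huybrechts and Voisin arguments, but you should note that the paper does \emph{not} supply any proof of Theorem~\ref{hv} at all: it is quoted verbatim as a known theorem of Huybrechts~\cite{h} and Voisin~\cite{vo1}, used only as motivation for the paper's own Theorem~\ref{km}. There is therefore no ``paper's own proof'' to compare your proposal against; the paper treats this result as a black box. Your outline ultimately does the same thing --- it isolates the hard step and cites \cite{h} and \cite{vo1} for it --- so in spirit you are aligned with the paper's treatment, but you have written considerably more than the paper does (which is nothing beyond the citation). If the intent is to match the paper, the correct ``proof'' here is simply a reference to \cite{h,vo1}.
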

We would like to prove a similar theorem for Kummer surface $Km(X)$ associated to a complex abelian surface $X$, but for (possibly infinite order) automorphisms induced from the complex abelian surface $X$.

  First let us set up some notation. For a complex smooth projective variety $Y$ of dimension $d$, one can associate a complex abelian variety called Albanese variety $Alb(Y)$ which admits a morphism from $alb_Y: CH_0(Y)_0\to Alb(Y)$, where $CH_0(Y)_0 :=$ ker $(deg:CH_0(Y)\to H^{2d}(Y))$. 
  Let $T(Y)$ denote the kernel of the Albanese map $alb_Y$. 
  \paragraph{Kummer surface} 
  Let us recall the definition of $Km(X)$: Let $\imath: X\to X$ be the inversion map on $X$. So the finite group $G:=\{id, \imath\}$ naturally acts on $X$ with fixed point set given by 2-torsion points of $X$. Thus the quotient scheme $X/G$ has singularity set given by the 16 2-torsion points. Now we can blow up $X/G$ along the closed subscheme formed by these 16 points, which we call the Kummer surface $Km(X)$ associated to $X$. Now let $f$ be an automorphism of the abelian surface $X$. Thus $f$ commutes with $\imath$. Hence $f$ fixes the fixed point set of $G$. So $f$ lifts to an automorphism $\tilde{f}$ of $X/G$ which fixes the closed subscheme given by the fixed points of $G$. Hence one can lift the automorphism $\tilde{f}$ to that of the Blow up $Km(X)$ given by $g$. So we get the commutative diagram
       \begin{equation}
\label{4}
\begin{tikzpicture}
[back line/.style={densely dotted},cross line/.style={loosely dotted}]
\matrix (m) [matrix of math nodes, row sep=2 em,column sep=2.em, text height=1.5ex, text depth=0.50ex]
 {
  Km(X) & Km(X) \\
 X/G & X/G \\
 };
\path[->,font=\scriptsize]
  (m-1-2) edge node[auto] {} (m-2-2)
  (m-1-1) edge node [auto] {} (m-2-1)
  (m-1-1) edge node [auto] {$g$} (m-1-2)

(m-2-1) edge node [auto] {$\tilde{f}$} (m-2-2);
\end{tikzpicture}
\end{equation}
  
Now we have a theorem by Bloch~\cite[Corollary A.10]{blochetal} which says that the rational map $\phi: X\dashrightarrow Km(X)$ induces a surjective isogeny $\phi^{\ast}: T(Km(X))\to T(X).$ More precisely $\phi^{\ast}\circ \phi_{\ast}= 2 Id : T(X)\to T(X)$, and also $\phi_{\ast}\circ \phi^{\ast}= 2 Id $. Further Roitman's result~\cite{roitman} that for any smooth projective surface $Y$, $T(Y)$ is torsion free, implies that $\phi^{\ast}: T(Km(X))\to T(X)$ is actually an isomorphism. Also the automorphism $g$ commutes with the rational map $\phi.$ Therefore have the commuting diagram:
   \begin{equation}
\label{V}
\begin{tikzpicture}
[back line/.style={densely dotted},cross line/.style={loosely dotted}]
\matrix (m) [matrix of math nodes, row sep=2 em,column sep=2.em, text height=1.5ex, text depth=0.50ex]
 {
  T(Km(X)) & T(Km(X)) \\
 T(X) & T(X) \\
 };
\path[->,font=\scriptsize]
  (m-1-2) edge node[auto] {$\phi^{\ast}$} (m-2-2)
  (m-1-1) edge node [auto] {$\phi^{\ast}$} (m-2-1)
  (m-1-1) edge node [auto] {$g_{\ast}$} (m-1-2)

(m-2-1) edge node [auto] {$f_{\ast}$} (m-2-2);
\end{tikzpicture}
\end{equation}
where the vertical arrows are isomorphisms. \\
Now let us assume that $f$ satisfies the assumptions of the Corollary~\ref{mc}, then by the conclusion $f_{\ast}:T(X)\to T(X)$ is Id. Hence by above discussion $g_{\ast}:T(Km(X))\to T(Km(X))$ is Id. Further, since the Albanese variety $Alb(Km(X))$ associated to the Kummer surface is 0, the Albanese morphism $alb: {CH_0(Km(X))}_0\to Alb(Km(X))$ is 0. Thus,
\begin{center}
$T(Km(X))= {CH_0(Km(X))}_0\subset CH_0(Km(X))$. 
\end{center}
One can define a splitting of the inclusion above compatible with the morphism $g_{\ast}.$ Fix a point $o\in Km(X)$ lying over the image of the origin of $X$ in the quotient variety $X/G$. Define 
\begin{center}
$\lambda: CH_0(Km(X))\to {CH_0(Km(X))}_0$ by $\alpha\mapsto \alpha- deg(\alpha)[o]$, 
\end{center}
where $[o]$ is the equivalence class associated to the point $o$ in $CH_0(Km(X))$ (this class $[o]$ is independent of the point chosen in the fiber over the image of the origin of $X$ in the quotient variety $X/G$, since the fiber is rationally connected). One can observe that the following diagram
   \begin{equation}
\label{5}
\begin{tikzpicture}
[back line/.style={densely dotted},cross line/.style={loosely dotted}]
\matrix (m) [matrix of math nodes, row sep=2 em,column sep=2.em, text height=1.5ex, text depth=0.50ex]
 {
  CH_0(Km(X)) & CH_0(Km(X))_0 & CH_0(Km(X)) \\
 CH_0(Km(X)) & CH_0(Km(X))_0 & CH_0(Km(X))\\
 };
\path[->,font=\scriptsize]
  (m-1-2) edge node[auto] {$g_{\ast}$} (m-2-2)
              edge [draw=none]
                                    node [sloped, auto=false,
                                     allow upside down]  {$\subset$} (m-1-3)
  (m-1-1) edge node [auto] {$g_{\ast}$} (m-2-1)
  (m-1-1) edge node [auto] {$\lambda$} (m-1-2)
(m-1-3) edge node [auto] {$g_{\ast}$} (m-2-3)
(m-2-1) edge node [auto] {$\lambda$} (m-2-2)
  (m-2-2)  edge [draw=none]
                                    node [sloped, auto=false,
                                     allow upside down]  {$\subset$} (m-2-3);
\end{tikzpicture}
\end{equation}
commutes. Now let $\alpha\in CH_0(Km(X))$. Thus, $g_{\ast}(\lambda(\alpha))=\lambda(g_{\ast}(\alpha))$. Since $g_{\ast}$ is Identity on $CH_0(Km(X))_0$, hence on the image of $\lambda$, $\lambda(\alpha)= g_{\ast}(\alpha)-deg(g_{\ast}(\alpha))[o].$ Thus $\alpha-deg(\alpha)[o]=g_{\ast}(\alpha)-deg(g_{\ast}(\alpha))[o]$. Hence $\alpha=g_{\ast}(\alpha).$

Thus, $g_{\ast}$ is the identity on $CH_0(Km(X)).$

So from above discussion, we have proved the following:
\begin{theorem}
\label{km}
Let $g$ be an automorphism of the Kummer surface $Km(X)$ associated to the abelian surface $X$, which is induced from an automorphism $f$ of $X$ as abelian variety. Further assume that $f$ is a symplectic automorphism of $X$. Then $g$ acts as the identity on $CH_0(Km(X))$.
\end{theorem}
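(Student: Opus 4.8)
The plan is to reduce the statement to Corollary~\ref{mc} applied to $f$ on the abelian surface $X$, and then to transport the resulting vanishing across the degree-two rational map $\phi : X \dashrightarrow Km(X)$ using Bloch's comparison of transcendental $0$-cycles, finally bootstrapping from the Albanese kernel to all of $CH_0(Km(X))$ since $Km(X)$ is a K3 surface. First I would make the automorphism explicit: because $f$ is a group automorphism of $X$ it commutes with the inversion $\imath = [-1]_X$, hence descends to an automorphism $\tilde f$ of the quotient $X/G$ with $G = \{\mathrm{id}, \imath\}$, and $\tilde f$ fixes the sixteen singular points; blowing these up produces the lift $g$ of $\tilde f$ to $Km(X)$, as in the diagram~\eqref{4}.

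Next I would collect the consequences of Bloch's theorem~\cite[Corollary A.10]{blochetal}. Writing $T(-)$ for the kernel of the Albanese map on degree-zero $0$-cycles, one has $\phi^{\ast}\circ\phi_{\ast} = 2\,\mathrm{Id}$ and $\phi_{\ast}\circ\phi^{\ast} = 2\,\mathrm{Id}$ on $T(X)$ and $T(Km(X))$ respectively, so (with $\mathbf{Q}$-coefficients, and in any case by Roitman's torsion-freeness~\cite{roitman}) $\phi^{\ast} : T(Km(X)) \xrightarrow{\sim} T(X)$ is an isomorphism. Since $g$ commutes with $\phi$, this isomorphism intertwines $g_{\ast}$ on $T(Km(X))$ with $f_{\ast}$ on $T(X)$; this is the commutative square~\eqref{V}.

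The key input is then Corollary~\ref{mc}, whose hypotheses I must verify for our $f$: $\dim X = 2 \le 5$, so $X$ satisfies assumption~($\ast$), and I claim that $f$ being symplectic forces $f_{\ast}$ to act as the identity on $H^2(X)/\widetilde N^1 H^2(X)$. Indeed $\ker(\mathrm{Id} - f_{\ast})$ is a sub-$\mathbf{Q}$-Hodge-structure of $H^2(X)$ which contains $H^{2,0}(X) = H^0(X,\Omega_X^2)$ after $\otimes \mathbf{C}$, hence contains the transcendental part $H^2(X)_{tr}$; and for an abelian surface the Lefschetz standard conjecture holds while $\mathrm{GHC}(X,1,2)$ is the Lefschetz $(1,1)$-theorem, so $\widetilde N^1 H^2(X) = N^1 H^2(X) = H^2(X)_{tr}^{\perp}$ and $H^2(X)/\widetilde N^1 H^2(X) \cong H^2(X)_{tr}$, on which $f_{\ast}$ is the identity. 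Corollary~\ref{mc} then gives $f_{\ast} = \mathrm{Id}$ on $\cap_{i=1}^{2} Pic^0(X)$, which by~\cite[Proposition~4(a)]{b} and the Beauville decomposition equals $CH_0(X)_2 = T(X)$; hence $f_{\ast} = \mathrm{Id}$ on $T(X)$, and via~\eqref{V}, $g_{\ast} = \mathrm{Id}$ on $T(Km(X))$.

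Finally I would upgrade to all of $CH_0(Km(X))$. As $Km(X)$ is a K3 surface its Albanese vanishes, so $T(Km(X)) = CH_0(Km(X))_0 := \ker(\deg)$. Fixing a point $o$ on the rational curve over (the image of) the origin, the class $[o]$ does not depend on the choice within that rationally connected fibre, so $g_{\ast}[o] = [o]$; hence the splitting $\lambda(\alpha) = \alpha - \deg(\alpha)[o]$ of $CH_0(Km(X)) \twoheadrightarrow CH_0(Km(X))_0$ commutes with $g_{\ast}$ (diagram~\eqref{5}), and a short diagram chase using $g_{\ast} = \mathrm{Id}$ on $CH_0(Km(X))_0$ together with the fact that $g_{\ast}$ preserves degree yields $g_{\ast} = \mathrm{Id}$ on $CH_0(Km(X))$. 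I expect the genuine difficulty to lie entirely inside Corollary~\ref{mc} (which rests on O'Sullivan's symmetrically distinguished cycles and Vial's refined Chow--K\"unneth decomposition); among the remaining steps the one needing the most care is the identification $\widetilde N^1 H^2(X) = H^2(X)_{tr}^{\perp}$ and the deduction that symplecticity of $f$ implies the hypothesis of Corollary~\ref{mc} --- after that everything is formal.
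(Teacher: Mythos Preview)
Your proposal is correct and follows essentially the same route as the paper: construct $g$ from $f$ via the quotient and blow-up, use Bloch's comparison and Roitman to identify $T(Km(X))$ with $T(X)$ equivariantly, apply Corollary~\ref{mc} to get $f_\ast=\mathrm{Id}$ on $T(X)$, and then pass from $CH_0(Km(X))_0$ to $CH_0(Km(X))$ via the splitting $\lambda$ using $g_\ast[o]=[o]$. If anything you are slightly more explicit than the paper in checking that symplecticity of $f$ implies the hypothesis of Corollary~\ref{mc} (via the Lefschetz $(1,1)$-theorem giving $\widetilde{N}^1H^2(X)=N^1H^2(X)=H^2(X)_{tr}^{\perp}$ for a surface), which the paper leaves to the discussion in the introduction.
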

\begin{remark}
In view of the Theorem~\ref{hv}, the Theorem~\ref{km} deals with automorphisms of the Kummer surface which are possibly of infinite order. One can construct Kummer surfaces with automorphisms of infinite order, for example for the Kummer surface associated with the abelian surface in the Example~\ref{foreg} above. We can get a similar result for generalized Kummer varieties $K_n(X)$ associated to the abelian surface $X$, where for $n=1$, $K_1(X)=Km(X).$
\end{remark}

\paragraph{Generalized Kummer varieties}
First we recall (see~\cite{b83}) the construction and properties of generalized Kummer varieties $K_n$, and the result of~\cite{lin}, which gives a Beauville type decomposition on $CH_0(K_n)$ in terms of that of $CH_0(X)$, for a complex abelian surface $X$.

Let $X_0^{n+1}$ = kernel of the sum map $\mu:X^{n+1}\to X$. The symmetric group $\mathfrak{S}_{n+1}$ acts on $X^{n+1}$, hence also on $X_0^{n+1}$. Have the quotient map $\rho: X^{n+1}\to X^{n+1}/\mathfrak{S}_{n+1}$ with restriction to $X_0^{n+1}$ induces its own quotient map $q:X_0^{n+1} \to K_{(n)}:=X_0^{n+1}/\mathfrak{S}_{n+1}$. On the other hand $\mu$ induces the sum map $\tilde{s}: X^{n+1}/\mathfrak{S}_{n+1}\to X$. One can check that $K_{(n)}= {\tilde{s}}^{-1}(0).$ \\
Let $\Delta \subseteq X^{n+1}$ be the closed subscheme consisting of points in $X^{n+1}$ with atleast 2 equal coordinates. Write $D:=\rho(\Delta)\subseteq X^{n+1}/\mathfrak{S}_{n+1}$. Let $\tilde{\nu}:X^{[n+1]}\to X^{n+1}/\mathfrak{S}_{n+1}$ be the Hilbert-Chow morphism, with $X^{[n+1]}$ as the Hilbert scheme of $X$ of length $n+1$ closed subschemes of $X$. This morphism is a map of resolution of singularities with the exceptional divisor $E:=\tilde{\nu}^{-1}(D)$. 
Define $K_n:=(\tilde{s}\circ\tilde{\nu})^{-1}(0).$ \\
Now the restriction of the Hilbert-Chow morphism $\tilde{\nu}$ to $K_n$ gives $\nu: K_n\to K_{(n)}$ which is a map of resolution of singularities and the exceptional divisor $E_0:=E\cap K_n$. Set $D_0=D\cap K_{(n)}$ and $\Delta_0=\Delta\cap X_0^{{n+1}}$. Then one has $\rho(\Delta_0)=D_0.$ We have following diagram of spaces:
\begin{equation}
\begin{tikzpicture}
[back line/.style={densely dotted},cross line/.style={loosely dotted}]
\matrix (m) [matrix of math nodes, row sep=.8 em,column sep=1.5em, text height=1.5ex, text depth=0.50ex]
{E_0 &      & E &   \\
        & K_n &    & X^{[n+1]}  \\
 D_0 &       &  D &     \\
        &  K_{(n)} &   & X^{n+1}/\mathfrak{S}_{n+1} \\
        &             &   X &    \\
  };
 \path[right hook->, font=\scriptsize]
  (m-2-2) edge node [auto]{} (m-2-4)
   (m-4-2) edge node [auto] {} (m-4-4);

\path[->, font=\scriptsize]
 (m-1-1) edge node [auto] {} (m-1-3)
             edge [draw=none]
                                    node [sloped, auto=false,
                                     allow upside down] {$\subseteq$} (m-2-2)
  edge node [auto] {} (m-3-1)
  (m-1-3) 
              edge [draw=none]
                                    node [sloped, auto=false,
                                     allow upside down] {$\subseteq$} (m-2-4)
           edge node [auto] {} (m-3-3)
  (m-2-2) 
           edge node [above left]{$\nu$} (m-4-2) 
  (m-2-4) edge node [above right] {$\tilde{\nu}$} (m-4-4) 
  (m-3-1) edge node [auto] {} (m-3-3) 
      edge [draw=none]
                                    node [sloped, auto=false,
                                     allow upside down] {$\subseteq$}(m-4-2)
 (m-3-3) edge [draw=none]
                                    node [sloped, auto=false,
                                     allow upside down] {$\subseteq$} (m-4-4)
    (m-4-4) edge node [below right] {$\tilde{s}$} (m-5-3)     
  (m-4-2) 
             edge node [below left] {$0$} (m-5-3);
\end{tikzpicture}
\label{2}
\end{equation}

\begin{remark}
$K_n$ constructed above is called a \textit{generalized Kummer variety} associated to the abelian surface $X$. For $n=1$ one can recover the Kummer surface associated to $X$ by above construction. $K_n$ is an irreducible, projective, hyperk\"ahler variety of dimension $2n$. 
\end{remark}

Since the multiplication morphism $m:X_0^{n+1} \to X_0^{n+1}$ commutes with the action of $\mathfrak{S}_{n+1}$, for each $m\in\mathbf{Z}$,
\begin{center}
$\displaystyle q^{\ast}: CH_0(K_{(n)})\xrightarrow{\sim} \Big(\bigoplus_{s=0}^{2n} CH_0(X_0^{n+1})_s\Big)^{\mathfrak{S}_{n+1}}=\bigoplus_{s=0}^{2n} CH_0(X_0^{n+1})_s^{\mathfrak{S}_{n+1}}$
\end{center}
 Since $q_{\ast}q^{\ast}: CH_0(K_{(n)})\to CH_0(K_{(n)})$ is given by $(n+1)!$ (\cite[Lemma 1.7.6]{fulton}) and the Chow groups are with rational coefficients, we get that $q_{\ast}$ is bijective.
Thus,
we get
\begin{center}
$\displaystyle q_{\ast}: \bigoplus_{s=0}^{2n} CH_0(X_0^{n+1})_s^{\mathfrak{S}_{n+1}}\to CH_0(K_{(n)})$
\end{center}
is bijective.
We obtain a following decomposition 
\begin{center}
$\displaystyle CH_0(K_{(n)})=\bigoplus_{s=0}^{2n} CH_0(K_{(n)})_s$
\end{center}
where $CH_0(K_{(n)})_s:= q_{\ast}CH_0(X_0^{n+1})_s^{\mathfrak{S}_{n+1}}$.
Now the Hilbert-Chow morphism $\nu: K_n\to K_{(n)}$ given by the desingularization (Hilbert-Chow morphism has fibers rationally connected) induces an isomorphism $\nu_{\ast}:CH_0(K_n)\to CH_0(K_{(n)})$ which induces a decomposition on $CH_0(K_n)$
\begin{equation}
\label{0cyclesonkummer}
\displaystyle CH_0(K_{n})=\bigoplus_{s=0}^{2n} CH_0(K_{n})_s,
\end{equation}
 where $CH_0(K_{n})_s:=\nu_{\ast}^{-1}(CH_0(K_{(n)})_s)$.

\paragraph{$Aut(X)$ and $Aut(K_n)$:}
 Next we want to compare automorphisms of $X$ and $K_n$. 
 Let $f\in Aut(X)$ as abelian variety. We assign to $f$ a unique $f_{n}\in Aut(K_n)$ as follows:
  \begin{equation}
\begin{tikzpicture}
[back line/.style={densely dotted},cross line/.style={loosely dotted}]
\matrix (m) [matrix of math nodes, row sep=2 em,column sep=2.em, text height=1.5ex, text depth=0.50ex]
{K_n & K_{(n)} & X^{n+1}_0 & X^{n+1} & X\\
  K_n & K_{(n)} & X^{n+1}_0 & X^{n+1} & X\\
  };

\path[->,font=\scriptsize]
 (m-1-1) edge node [auto] {$\nu$} (m-1-2)
             edge node [auto] {$f_{n}$} (m-2-1)
  (m-1-2) edge node [auto] {$f_{(n)}$} (m-2-2)
   (m-1-3) edge node [auto] {$q$} (m-1-2)
  (m-1-3) edge node [auto] {} (m-1-4)
              edge node [auto] {$f_0$} (m-2-3)
   (m-1-4) edge node [auto] {$\mu$} (m-1-5)
           edge node [auto] {$f^{\times(n+1)}$} (m-2-4)
    (m-1-5) edge node [auto] {$f$} (m-2-5)
   (m-2-1) edge node [auto] {$\nu$} (m-2-2)
  (m-2-3) edge node [auto]{$q$} (m-2-2) 
   (m-2-3) edge node [auto] {} (m-2-4)
  (m-2-4) edge node [auto] {$\mu$} (m-2-5) ;
\end{tikzpicture}
\label{3}
\end{equation}


Now as we did for the Kummer surface,
\begin{theorem}
\label{gkm}
Let $f_n$ be the automorphism of $K_n$ induced from an automorphism $f$ of $X$. Suppose $f$ is symplectic on $X$. Then $f_n$ is symplectic on $K_n$ and $f_{n}$ acts as the identity on $CH_0(K_n)_{2n}.$
\end{theorem}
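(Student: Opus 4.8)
The plan is to push the statement down the tower of diagram~(\ref{3}), from $K_n$ to the abelian surface $X$, and then invoke Corollary~\ref{mc}.

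For the symplectic assertion, recall that $K_n$ is irreducible holomorphic symplectic of dimension $2n$, so $H^0(K_n,\Omega^2_{K_n})$ is one-dimensional. Since $\nu\colon K_n\to K_{(n)}$ is a resolution of rational singularities and $q\colon X_0^{n+1}\to K_{(n)}$ is the quotient by $\mathfrak{S}_{n+1}$, there are canonical identifications $H^{2,0}(K_n)\cong H^{2,0}(K_{(n)})\cong H^{2,0}(X_0^{n+1})^{\mathfrak{S}_{n+1}}$. From $H^1(X_0^{n+1},\mathbf{Q})=\mathrm{coker}\bigl(\mu^{*}\colon H^1(X,\mathbf{Q})\to H^1(X^{n+1},\mathbf{Q})\bigr)\cong H^1(X,\mathbf{Q})\otimes V$, with $V$ the standard representation of $\mathfrak{S}_{n+1}$, one computes $H^{2,0}(X_0^{n+1})^{\mathfrak{S}_{n+1}}=\wedge^2H^{1,0}(X)\otimes(\mathrm{Sym}^2V)^{\mathfrak{S}_{n+1}}\cong H^0(X,\Omega^2_X)$, and $f_0=f^{\times(n+1)}|_{X_0^{n+1}}$ acts on $H^1(X_0^{n+1},\mathbf{Q})$ as $f^{*}\otimes\mathrm{id}_V$, hence on this line through $\det\bigl(f^{*}|_{H^{1,0}(X)}\bigr)$, i.e.\ through the scalar by which $f$ acts on $H^0(X,\Omega^2_X)$, which is $1$ by hypothesis. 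Hence $f_n$ fixes the symplectic form and is symplectic. (One can instead quote Beauville's description of $H^2(K_n)$ together with its naturality in $X$.)

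For the action on $CH_0(K_n)_{2n}$, first reduce to $X$. The morphisms $\nu$ and $q$ in diagram~(\ref{3}) are equivariant, so $\nu_*$ and $q_*$ intertwine $(f_n)_*$, $(f_{(n)})_*$ and $(f_0)_*$; since $CH_0(K_n)_{2n}=\nu_*^{-1}q_*\bigl(CH_0(X_0^{n+1})_{2n}^{\mathfrak{S}_{n+1}}\bigr)$ by definition (see~(\ref{0cyclesonkummer})) and $q_*$ is bijective, it is enough to prove $(f_0)_*=\mathrm{id}$ on $CH_0(X_0^{n+1})_{2n}$. Projection to the first $n$ coordinates is an isomorphism of abelian varieties $X_0^{n+1}\xrightarrow{\ \sim\ }X^n$, and since $f$ is a homomorphism it commutes with the sum map $\mu$, so under this isomorphism $f_0$ becomes $f^{\times n}$ and $CH_0(X_0^{n+1})_{2n}$ becomes $CH_0(X^n)_{2n}$. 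As $\dim X^n=2n$, this is the top Beauville piece, $CH_0(X^n)_{2n}=G^{2n}(X^n)=\cap_{i=1}^{2n}Pic^0(X^n)$ (Remark~\ref{remCKoperate}), which by Bloch's lemma~\cite[Lemma~1.2(c)]{bl1} is spanned by Pontryagin products $((b_1)-(0))\ast\cdots\ast((b_{2n})-(0))$ with $b_k\in X^n$.

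Writing $b_k=\sum_{j=1}^n\iota_j(b_{k,j})$ with $\iota_j\colon X\hookrightarrow X^n$ the $j$-th coordinate inclusion and expanding each factor through $(u+w)-(0)=((u)-(0))+((w)-(0))+((u)-(0))\ast((w)-(0))$, such a product becomes a $\mathbf{Q}$-combination of terms $(\iota_1)_*\delta_1\boxtimes\cdots\boxtimes(\iota_n)_*\delta_n$, where each $\delta_j\in CH_0(X)$ is a Pontryagin product of some of the $((b_{k,j})-(0))$'s. Since $(\iota_j)_*$ preserves and $\boxtimes$ adds Beauville weights, and each such term lies in $\bigoplus_{s\le 2}CH_0(X^n)_s$, only the weight-$2$ component of each $\delta_j$ contributes in $CH_0(X^n)_{2n}$, and that component lies in $CH_0(X)_2=G^2(X)$; hence $CH_0(X^n)_{2n}$ is spanned by external products $\gamma_1\boxtimes\cdots\boxtimes\gamma_n$ with $\gamma_j\in G^2(X)$. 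On such an element $(f^{\times n})_*(\gamma_1\boxtimes\cdots\boxtimes\gamma_n)=(f_*\gamma_1)\boxtimes\cdots\boxtimes(f_*\gamma_n)$, so it remains to check $f_*=\mathrm{id}$ on $G^2(X)$. But $X$ is a complex abelian surface, hence satisfies $(\ast)$ with $N^1H^2(X)=\widetilde{N}^1H^2(X)$ and GHC($X,1,2$); and symplecticity of $f$ means $f_*-\mathrm{id}$ annihilates $H^0(X,\Omega^2_X)$, so its kernel, a sub-Hodge structure of $H^2(X)$, contains $H^2(X)_{tr}\cong H^2(X)/\widetilde{N}^1H^2(X)$ (the isomorphism being realized through the $f_*$-stable orthogonal splitting $H^2(X)=H^2(X)_{tr}\oplus H^2(X)_{tr}^{\perp}$), so $f_*$ induces the identity on $H^2(X)/\widetilde{N}^1H^2(X)$; Corollary~\ref{mc} then gives $f_*=\mathrm{id}$ on $\cap_{i=1}^2Pic^0(X)=G^2(X)$. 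The main technical point is the weight bookkeeping of this last paragraph, i.e.\ the multiplicativity of the top Beauville piece of $CH_0$ under products; the identification of $H^{2,0}(K_n)$ with $H^0(X,\Omega^2_X)$ compatibly with the automorphism, needed for the symplectic claim, is standard but also requires some care.
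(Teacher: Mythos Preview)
Your proof is correct and follows essentially the same strategy as the paper: identify $H^{2,0}(K_n)\cong H^{2,0}(X)$ functorially for the symplectic claim, then push down along diagram~(\ref{3}) to reduce to showing that $f^{\times n}$ acts trivially on $CH_0(X^n)_{2n}$, use that this group is generated by external products of classes in $CH_0(X)_2$, and invoke Corollary~\ref{mc}. The only noteworthy differences are at the level of execution: the paper quotes Beauville's description $H^{2,0}(K_n)\simeq H^{2,0}(X)$ directly rather than computing $H^{2,0}(X_0^{n+1})^{\mathfrak{S}_{n+1}}$ by hand, and it obtains the surjectivity of $CH_0(X)_2^{\otimes n}\to CH_0(X^n)_{2n}$ immediately from the surjectivity of the external product $CH_0(X)^{\otimes n}\to CH_0(X^n)$ together with its compatibility with the Beauville grading, whereas you re-derive this via Bloch's Pontryagin generators and an expansion argument; your route works but is more circuitous, and the notation ``$(\iota_1)_*\delta_1\boxtimes\cdots\boxtimes(\iota_n)_*\delta_n$'' should simply read $\delta_1\boxtimes\cdots\boxtimes\delta_n$ (identifying the Pontryagin product of the $(\iota_j)_*\delta_j$ on $X^n$ with the external product).
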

\begin{proof}
\begin{itemize}
\item $f_n$ is symplectic: We have $H^2(K_n, \mathbf{C})\simeq H^2(X,\mathbf{C})\oplus \mathbf{C}[E_0].$ 
Further $H^{2,0}(K_n)\simeq H^{2,0}(X)$, which is functorial. Hence if $f$ acts as the identity on $H^{0}(X, \Omega_X^2)$, then $f_n$ acts as the identity on $H^{0}(K_n,\Omega_{K_n}^2)$. Thus, $f_n$ is symplectic. 
\item By the Corollary~\ref{mc}, $f$ acts as Id on $CH_0(X)_2.$ Next the map $f^{\times(n+1)}:X^{n+1}\to X^{n+1}$ induces an automorphism $f_0$ of $X_0^{n+1}$. Consider the action of $f_0$ on $CH_0(X_0^{n+1})^{\mathfrak{S}_{n+1}}$.

\end{itemize}

We will identify $X_0^{n+1}$ with $X^n$ via
\begin{center}
$\displaystyle X_0^{n+1}\to X^n : (z_1,z_2,\cdots,z_n,-\sum_{j=1}^nz_j)\mapsto (z_1,z_2,\cdots,z_n).$
\end{center}
$\mathfrak{S}_{n+1}$ acts on $X^n$, via the action on $X_0^{n+1}$ and the above isomorphism. Write $\mathfrak{S}_m$ as acting on the set $\{1,2,\cdots, m\}$, and view $\mathfrak{S}_{n}\subset\mathfrak{S}_{n+1}$. \\
Action of $\mathfrak{S}_{n+1}$ on $X^n: \mathfrak{S}_n$ acts on $X^n$ by permuting the co-ordinates. Write $t_i\in \mathfrak{S}_{n+1}$ for the transposition $(i, n+1)$ which permutes the $i$ with $n+1$, for $1\leq i\leq n$; the action of $t_i$ on $X^n$ defined by
\begin{center}
$\displaystyle t_i\cdot(z_1,z_2,\cdots,z_n)\mapsto (z_1,z_2,\cdots,z_{i-1},-\sum_{j=1}^nz_j,z_{i+1},\cdots,z_n).$
\end{center}
   
     First we identify cycles in the $\mathfrak{S}_{n+1}$-invariant eigenspaces $CH_0(X^n)_r^{\mathfrak{S}_{n+1}}$ of the Beauville decomposition for $X^n$ in terms of the cycles in $CH_0(X)_{s}$ for $s=0, 2$. 
   We have the following surjective homomorphism given by the external product of 0-cycles
   \begin{center}
$\underbrace{CH_0(X)\otimes CH_0(X)\otimes\cdots\otimes CH_0(X)}_{\text{$n$ times}}\xrightarrow{p_1^{\ast}(-)\cdots p_n^{\ast}(-)} CH_0(X^n)$,
   \end{center}
   where $p_i: X^n\to X$ is the projection onto the $i^{th}$ component and for 0-cycles $\alpha_1,\cdots,\alpha_n$, 
   \begin{center}
$p_1^{\ast}(\alpha_1)\cdots p_n^{\ast}(\alpha_n)$ 
   \end{center}
denotes the intersection product of the cycles $p_1^{\ast}(\alpha_1),\cdots, p_n^{\ast}(\alpha_n)$ on $X^n$.
We compose this map with the symmetrizer map to get
\begin{center}
$\underbrace{CH_0(X)\otimes CH_0(X)\otimes\cdots\otimes CH_0(X)}_{\text{$n$ times}}\xrightarrow{p_1^{\ast}(-)\cdots p_n^{\ast}(-)} CH_0(X^n)\xrightarrow{\sum_{\sigma\in\mathfrak{S}_{n+1}}\sigma^{\ast}(-)}CH_0(X^n)^{\mathfrak{S}_{n+1}}.$
\end{center}
By Beauville's decomposition (as in Theorem~\ref{bdec}) $CH_0(X)=\bigoplus_{s=0}^2 CH_0(X)_{s}$ ; $CH_0(X^n)=\bigoplus_{r=0}^{2n} CH_0(X^n)_{r}$,  and composing the above map, for every $r$ such that $0\leq r\leq 2n$, we have the surjective homomorphisms $\Psi_{r}$, 
\begin{center}
$\displaystyle \Psi_r:\bigoplus_{\substack{s_1+s_2+\cdots+s_n=r\\ s_j=0, 1, 2}} CH_0(X)_{s_1}\otimes CH_0(X)_{s_2}\otimes\cdots\otimes CH_0(X)_{s_n}\to CH_0(X^n)_r^{\mathfrak{S}_{n+1}}$.
\end{center}
Now one can easily observe that for $r=2n$, the homomorphism 
   
   \begin{center}

    $\Psi_{2n}:\underbrace{CH_0(X)_{2}\otimes CH_0(X)_{2}\otimes\cdots\otimes CH_0(X)_{2}}_{\text{$n$ times}}\to CH_0(X^n)_{2n}^{\mathfrak{S}_{n+1}}$
   \end{center}
is surjective.

 \paragraph{Back to proof of Theorem~\ref{gkm}:}
Since $f_0$ acts on $CH_0(X_0^{n+1})^{\mathfrak{S}_{n+1}}$ via the action of $f$ on 
\begin{center}
$\underbrace{CH_0(X)_{2}\otimes CH_0(X)_{2}\otimes\cdots\otimes CH_0(X)_{2}}_{\text{$n$ times}}$
\end{center}

 which is identity, as $f$ acts as the identity on $CH_0(X)_{2}$.
 
 Hence $f_0$ acts on $CH_0(X^n)_{2n}^{\mathfrak{S}_{n+1}}$ as the identity. Thus $f_n$ acts on $CH_0(K_n)_{2n}$ by the identity. 
 
 \end{proof}

\begin{remark}
The Theorem~\ref{gkm} can be compared with
a general conjecture made in~\cite[Conjecture 0.3]{fu}, finite order symplectic automorphisms of irreducible hyperk\"ahler varieties act as the identity on $CH_0$. Further, the automorphism $f_n$ of $K_n$ above could possibly be of infinite order.    
\end{remark}
\end{section}

\paragraph {Acknowledgement} 
I would like to express my gratitude to my thesis advisor Prof. V. Srinivas for introducing me to the subject and suggesting the problem as well as constant guidance and encouragement that led to this paper. I would also like to thank Prof. N. Fakhruddin for bringing~\cite{o} to my attention and subsequent suggestions during the work. I would like to thank both for pointing out errors in the earlier version, and suggestions which immensely improved the exposition. I would like to thank D. Huybrechts and H.-Y. Lin for pointing out error in the earlier version.   
I am supported by SPM fellowship funded by CSIR, India (SPM-07/858(0139)/2012).

\bibliographystyle{} 
\medskip
\medskip
\medskip

Rakesh Pawar, \textsc{School of Mathematics,
  Tata Institute of Fundamental Research, Mumbai,
  Mumbai - 400 005, India.
  }\par\nopagebreak
  \textit{E-mail address}: \texttt{Email: rpawar@math.tifr.res.in}

\end{document}